\newcommand{\Magma}{{\textsc {Magma}}\xspace}
\newtheorem{thm}{Theorem}[section]
\newtheorem{lem}[thm]{Lemma}
\newtheorem{cor}[thm]{Corollary}
\newtheorem{prop}[thm]{Proposition}
\theoremstyle{definition}
\newtheorem{Definition} [thm] {Definition}
\newtheorem{Remark}  [thm] {Remark}
\newcommand{\Stab}{\mathop{\mathrm{Stab}}}
\newcommand{\Syl}{\mathop{\mathrm{Syl}}}
\DeclareMathOperator{\PSp}{\mathrm{S}}   \DeclareMathOperator{\POmega}{\mathrm{O}}
\DeclareMathOperator{\GL}{\mathrm{GL}}   \DeclareMathOperator{\Lg}{\mathrm{L}}
\DeclareMathOperator{\Ug}{\mathrm{U}}
\DeclareMathOperator{\Alt}{\mathrm{A}}
\DeclareMathOperator{\Sym}{\mathrm{S}}
\newcommand{\Aut}{\mathop{\mathrm{Aut}}}
\newcommand{\Out}{\mathop{\mathrm{Out}}}
\newcommand{\ws}{{\widetilde{\omega}}}
\numberwithin{equation}{section}
\title[Sharp upper bounds]{Sharp upper bounds on the minimal number of elements required to generate a transitive permutation group}
\author{Gareth Tracey}
\address{Mathematical Institute, University of Oxford}
\email{garethtracey1@gmail.com}
\thanks{The author was supported by the Engineering and Physical Sciences Research Council (grant no. EP/T017619/1).}
\subjclass[2010]{20B40, 05C30}
\begin{document}
\begin{abstract}
The purpose of this paper is to prove that if $G$ is a transitive permutation group of degree $n\geq 2$, then $G$ can be generated by $\lfloor cn/\sqrt{\log{n}}\rfloor$ elements, where $c:=\sqrt{3}/2$. Owing to the transitive group $D_8\circ D_8$ of degree $8$, this upper bound is best possible. Our new result improves a 2015 paper by the author, and makes use of the recent classification of transitive groups of degree $48$.
\end{abstract}

\maketitle

\section{Introduction}
\label{sec:gennos}
For an arbitrary group $G$, let $d(G)$ be the minimal size of a generating set
of $G$. In \cite{MR3812195}, the problem of finding numerical upper bounds for $d(G)$ for an arbitrary transitive permutation group $G$ of degree $n$ is considered. It had already been proved in \cite{Luc} that $d(G)$ is at most $\frac{cn}{\sqrt{\log n}}$ in this case, where $c$ is an absolute constant. This bound is shown to be asymptotically best possible in \cite{KN} (that is, there exists constants $c_1$, $c_2$, and an infinite family $(G_{n_i})_{i=1}^{\infty}$ of transitive groups of degree $n_i$, with $c_1\le \frac{n_i}{d(G_{n_i})\sqrt{\log n_i}}\le c_2$ for all $i$).

In \cite{MR3812195} it is proved that, apart from a finite list of possible exceptions, the bound $d(G)\le \left\lfloor \frac{cn}{\sqrt{\log n}}\right\rfloor$ holds, where $c:=\frac{\sqrt{3}}{2}$. This bound is best possible in the sense that $d(G)= \frac{\frac{\sqrt{3}}{2}n}{\sqrt{\log n}}$ when $G=D_8\circ D_8<\Sym_8$ and $n=8$.

In this paper, we remove the possible exceptions listed in \cite[Theorem 1.1]{MR3812195}. More precisely, we prove: 
\begin{thm}\label{thm:transgens}
Let $G$ be a transitive permutation group of degree $n$. Then 
$$d(G)\le \left\lfloor\frac{cn}{\sqrt{\log{n}}}\right\rfloor$$
where $c:=\frac{\sqrt{3}}{2}$.
\end{thm}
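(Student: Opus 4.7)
The plan is structurally straightforward: \cite[Theorem 1.1]{MR3812195} already proves the bound $d(G)\le \lfloor cn/\sqrt{\log{n}}\rfloor$ outside of an explicitly listed, finite collection $\mathcal{E}$ of transitive permutation groups, so the entire argument reduces to verifying the inequality on each $G\in\mathcal{E}$.

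I would then proceed group-by-group through $\mathcal{E}$. For each $G\in\mathcal{E}$ of degree $n$, first compute the integer target $\lfloor cn/\sqrt{\log{n}}\rfloor$, then use \GAP\ (or \Magma) to bound $d(G)$ from above via a routine such as \texttt{SmallGeneratingSet} and, if necessary, from below via Frattini-quotient or chief-factor estimates, refining until the two meet under the target. The critical enabler, as the abstract advertises, is the recent extension of the \TG\ library through degree $48$: every $G\in\mathcal{E}$ can now be located and queried directly, whereas in 2015 this was exactly the missing ingredient.

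For exceptions whose group order renders brute-force enumeration infeasible (degree $48$ alone accounts for hundreds of millions of transitive groups), I would exploit the structural description \cite{MR3812195} forces on any candidate exception. Typically such a $G$ must sit inside a specific imprimitive embedding $G\le H\wr K$ with tightly controlled $H$ and $K$, so recovering this decomposition reduces matters to a direct check on the relevant block kernel or point stabiliser combined with the wreath-product generation estimates already proved in \cite{MR3812195}. The principal obstacle is precisely this targeting step: translating the abstract characterisation of $\mathcal{E}$ from \cite{MR3812195} into a focused \GAP\ query that returns only the transitive groups of the relevant degree with any realistic chance of violating the bound, rather than iterating over the full library. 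Once the pruning is in place, the remaining verification should be essentially bookkeeping.
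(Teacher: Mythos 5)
Your proposal rests on a false premise: the exceptions left open in \cite[Theorem 1.1]{MR3812195} are not a finite explicit list of groups that can be enumerated and checked one at a time. What \cite{MR3812195} actually leaves open is a finite list of \emph{degrees} --- namely $n=2^x3^y5$ with $y=0$, $17\le x\le 26$ or $y=1$, $15\le x\le 35$ --- together with the structural information that $G$ is imprimitive with minimal block size $2$ and that $G^{\Sigma}$ (the action on the blocks) has no soluble transitive subgroup. The smallest such degree is $2^{17}\cdot 5$ and the largest is $2^{35}\cdot 15$; there is no hope of locating these groups in any database, and the degree-$48$ classification of \cite{HRT} plays a much more modest role than your proposal assumes. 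It is used only once, to sharpen the generic estimate $d(X)\le 16$ to the exact bound $d(X)\le 10$ for transitive $X\le \Sym_{48}$, which propagates through one inductive step (the bound on $d(G^{\Sigma})$ for $n=2^{15}\cdot 15$). So ``SmallGeneratingSet on each member of $\mathcal{E}$'' cannot be the argument.

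The second half of your proposal correctly senses that one must exploit the embedding $G\le 2\wr G^{\Sigma}$ and bound $d(G)\le d_{G^{\Sigma}}(M)+d(G^{\Sigma})$, but you then defer to ``the wreath-product generation estimates already proved in \cite{MR3812195}'' --- and those estimates are exactly what was insufficient, which is why these degrees were excluded in the first place. The bound on $d_{G^{\Sigma}}(M)$ via $E_{sol}$ requires a soluble subgroup of $G^{\Sigma}$ with well-controlled orbit lengths, and when $G^{\Sigma}$ has no soluble transitive subgroup the 2015 machinery produced orbit-length data too weak to close the gap. The genuinely new content of the paper is a structure theorem (Theorem \ref{thm:MinTransTheorem}) for insoluble minimal transitive groups of degree $2^x3^y5^z$ ($0\le y,z\le 1$): it pins down the possible nonabelian chief factors (via the Liebeck--Praeger--Saxl classification of maximal subgroups $M$ with $\pi(|M|)=\pi(|T|)$, a Frattini-type argument, and an analysis of subdirect products) and, crucially, exhibits an explicit soluble subgroup $F$ whose orbit lengths are tabulated case by case. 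Only with that table in hand does $\sum_i E_{sol}(s_i,2)+d(G^{\Sigma})$, computed inductively on the degree, fall below $\lfloor cn/\sqrt{\log n}\rfloor$. Your proposal contains no substitute for this step, so as written it does not constitute a proof.
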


There are a number of steps in the proof of Theorem \ref{thm:transgens}. For this reason, we will spend the next few paragraphs outlining our general strategy for the proof.

First, by \cite[Theorem 5.3]{MR3812195}, we only need to prove Theorem \ref{thm:transgens} when $G$ is imprimitive with minimal block size $2$, and $n$ has the form $n=2^x3^y5$ with either $y=0$ and $17\le x\le 26$; or $y=1$ and $15\le x\le 35$. Thus, in particular, $G$ may be viewed as a subgroup in a wreath product $2\wr G^{\Sigma}$, where $\Sigma$ is a set of blocks for $G$ of size $2$. It follows that $d(G)\le d_{G^{\Sigma}}(M)+d(G^{\Sigma})$, where $M$ is the intersection of $G$ with the base group of the wreath product, and $d_{G^{\Sigma}}(M)$ is the minimal number of elements required to generate $M$ as a $G^{\Sigma}$-module.

Of course, $d(G^{\Sigma})$ can be bounded using induction. The bulk of this section will therefore be taken up with finding upper bounds on $d_{G^{\Sigma}}(M)$. Since $d_{G^{\Sigma}}(M)\le d_H(M)$ for any subgroup $H$ of $G^{\Sigma}$, the strategy of the third author in \cite{MR3812195} in this case involved replacing $G^{\Sigma}$ by a convenient subgroup $H$ of $G^{\Sigma}$, and then deriving upper bounds on $d_H(M)$, usually in terms of the lengths of the $H$-orbits in $\Sigma$. This approach turns out to be particularly fruitful when $H$ is chosen to be a soluble transitive subgroup of $G^{\Sigma}$, whenever such a subgroup exists. When $G^{\Sigma}$ does not contain a soluble transitive subgroup, however, the analysis becomes much more complicated. This leads to less sharp bounds, and ultimately, the omitted cases in \cite[Theorem 1.1]{MR3812195}.

Our approach in this section involves a careful analysis of the orbit lengths of soluble subgroups in a minimal transitive insoluble subgroup of $G$, building on the work in \cite{MR3812195} in the case $n=2^x3$. 

The layout of the paper is as follows: First, Section \ref{sec:Dir} contains a necessary discussion of subdirect products of finite groups.
Then, in Section \ref{sec:MinTrans} we prove a general theorem concerning minimal transitive groups of degree $2^x3^y5^z$, with $0\le y,z\le 1$. Finally, this information will allow us to prove Theorem \ref{thm:transgens}, and we do so in Section \ref{sec:proof}.

\vspace{0.25cm}
\noindent\textbf{Acknowledgements:} It is a pleasure to thank Derek Holt and Gordon Royle for helpful conversations and suggestions.

\section{Subdirect products of finite groups}\label{sec:Dir}
As mentioned above, we begin preparations toward the proof of Theorem \ref{thm:transgens} with some straightforward but necessary lemmas concerning subdirect products of finite groups. First, we make the following remark concerning our notation and terminology in direct products of finite groups:
\begin{Remark}
For a group $T$ we will write $T^e$ for the direct product of $e$ copies of $T$. More generally, let $G_1,\hdots,G_r$ be finite groups, and consider the direct product $G:=G_1^{e_1}\times\hdots\times G_r^{e_r}$. Fix subgroups $H_i\le G_i$, for $1\le i\le r$, and set $e:=\sum_{i=1}^{r}k_i$. Then we write $H_1^{e_1}\times\hdots\times H_r^{e_r}$ for the subgroup $\{(x_1,\hdots,x_e)\text{ : }x_j\in H_i\text{ for }f_{i-1}+1\le j\le f_{i}\}$ of $G$, where $f_0:=0$, and $f_i:=\sum_{k=1}^{i}e_k$ for $i>0$. 

\noindent Also, the subgroup $\Sym_{e_1}\times\hdots\times\Sym_{e_r}$ of $\Sym_e$ (in its natural intransitive action) acts via automorphisms on $G$ (by permutation of coordinates). Thus we can, and do, speak of ($\Sym_{e_1}\times\hdots\times\Sym_{e_r}$)-conjugates of subgroups of $G$. This will be very useful for avoiding cumbersome notation.

\noindent Finally, for a group $T$ we call a subgroup $G$ of $T^e$ a \emph{diagonal subgroup} of $T^e$ if there exists automorphisms $\alpha_i$ of $T$ such that $G=\{(t^{\alpha_1},t^{\alpha_2},\hdots,t^{\alpha_r})\text{ : }t\in T\}$. 
\end{Remark}

Suppose now that $G$ is a subgroup in a direct product $G_1\times\hdots\times G_e$ of groups $G_i$, and write $\pi_i:G\rightarrow G_i$ for the $i$th coordinate projections. Then $G$ is a \emph{subdirect product} in $G_1\times\hdots\times G_e$ if $G\pi_i=G_i$ for all $i$. We also introduce the following non-standard definition:
\begin{Definition}\label{Def:Subd}
Let $G_i$ be as above, and let $n$ be a positive integer. We say that $G$ is \emph{a subdirect product of the form} $G=\frac{1}{n}(G_1\times\hdots\times G_e)$ if each of the following holds:
\begin{enumerate}[(a)]
    \item $G$ is a subdirect product in $G_1\times\hdots\times G_e$;
    \item $G$ contains $[G_1,G_1]\times\hdots\times [G_e,G_e]$; and
    \item $G$ has index $n$ in $G_1\times\hdots\times G_e$.
\end{enumerate}
\end{Definition}

The following is a collection of easy facts concerning subdirect products in direct products of finite groups:
\begin{lem}\label{lem:Subd}
Let $G_1,\hdots,G_e$ be finite groups, and $\pi_i:G_1\times\hdots\times G_e\rightarrow G_i$ be as above, and let $K_i$ be the intersection of $G_i$ with the $i$th coordinate subgroup of $G_1\times\hdots\times G_e$. Suppose that $G$ is a subdirect product in $G_1\times\hdots\times G_e$. Then each of the following holds:
\begin{enumerate}[(i)]
    \item $K_i\pi_i\unlhd G_i$ for all $i$.
    \item If $e=2$ and $G_1$ and $G_2$ have no common nontrivial homomorphic images, then $G=G_1\times G_2$.
    \item Let $e=\sum_{i=1}^r e_i$ be a positive partition of $e$, and define $f_0:=0$, and $f_i:=\sum_{k=1}^{i}e_k$ for $1\le i\le r$. Suppose that for each $1\le i\le r$, there exists a finite simple group $T_i$ such that $G_j$ is isomorphic to $T_i$ for all $f_{i-1}+1\le j\le f_{i}$. Then for each $1\le i\le r$ there exists a partition $\sum_{k=1}^{t_i} d_{i,k}$ of $e_i$ and diagonal subgroups $D_{i,k}$ of $T_i^{d_{i,k}}$ such that $G$ is $(\Sym_{e_1}\times\hdots\times\Sym_{e_r})$-conjugate to $(D_{1,1}\times \hdots\times D_{1,t_1})\times \hdots\times (D_{r,1}\times\hdots\times D_{r,t_r})$.
    \item Let $T$ be a nonabelian simple group, and suppose that $|G_i|<|T|$ for all $i$. Then $G$ has no homomorphic image isomorphic to $T$. 
    \item Fix $1< f< e$. Suppose that $G_i$ is nonabelian simple for $1\le i\le f$, and that $|G_j|<|G_i|$ for all $1\le i\le f$ and $f+1\le j\le e$. Assume also that $|G_i|$ does not divide the index of $G$ in $G_1\times\hdots\times G_e$ for any $1\le i\le f$. Then $G=G_1\times\hdots\times G_f\times L$, where $L$ is a subdirect product in $G_{f+1}\times\hdots\times G_e$.   
\end{enumerate}
\end{lem}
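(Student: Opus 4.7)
Parts (i)--(iii) are standard results, and I would only sketch them. For (i), note that $K_i=G\cap G_i$ is normal in $G$ (it is the intersection of $G$ with a normal subgroup of $G_1\times\cdots\times G_e$), and since $\pi_i$ restricted to $G$ is surjective by subdirectness, the image $K_i\pi_i$ is normal in $G_i$. For (ii), I invoke Goursat's lemma: subdirect products of $G_1\times G_2$ correspond to isomorphisms $G_1/N_1\cong G_2/N_2$ between quotients of the two factors, and the no-common-quotient hypothesis forces $N_i=G_i$, so $G=G_1\times G_2$. For (iii), this is the classical description of subdirect products in a power of a simple group, provable by induction on $e$: use (ii) to split off distinct isomorphism classes, then handle each $T_i$-block by a Goursat-type argument using simplicity.

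For (iv), I would first upgrade the statement: any subgroup $G\le G_1\times\cdots\times G_e$ (not necessarily subdirect) with $|G_i|<|T|$ for all $i$ admits no surjection onto the nonabelian simple group $T$. Induct on $e$. The case $e=1$ is immediate from $|G|\le|G_1|<|T|$. For $e>1$, given $\phi\colon G\twoheadrightarrow T$, let $K$ be the kernel of the composition $G\hookrightarrow G_1\times\cdots\times G_e\to G_1$. Since $T$ is simple, $\phi(K)$ is either trivial or all of $T$. If trivial, $\phi$ factors through the injection $G/K\hookrightarrow G_1$, giving $|T|\le|G_1|$, contradiction. If $\phi(K)=T$, then $K\le G_2\times\cdots\times G_e$ surjects onto $T$, contradicting the inductive hypothesis.

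Part (v) is where the main work lies. Since each $G_i$ with $i\le f$ is nonabelian simple, item (i) forces $K_i\pi_i$ to be either trivial or all of $G_i$. Suppose $K_i\pi_i=\{1\}$ for some $i\le f$; then $G\cap G_i=\{1\}$, and since $G_i$ is normal in $G_1\times\cdots\times G_e$, the product $G_iG$ is a subgroup of order $|G_i|\cdot|G|$. Hence $|G_i|$ divides $[G_1\times\cdots\times G_e:G]$, contradicting the hypothesis. Thus $K_i=G_i\le G$ for every $i\le f$, so $G_1\times\cdots\times G_f\le G$. Setting $L:=G\cap(\{1\}^f\times G_{f+1}\times\cdots\times G_e)$, every $g=(g_1,\ldots,g_e)\in G$ factors as $(g_1,\ldots,g_f,1,\ldots,1)\cdot(1,\ldots,1,g_{f+1},\ldots,g_e)$, with the first factor in $G_1\times\cdots\times G_f\le G$ and the second therefore in $L$; this yields the internal decomposition $G=G_1\times\cdots\times G_f\times L$, and subdirectness of $L$ in $G_{f+1}\times\cdots\times G_e$ follows directly from subdirectness of $G$ together with $G_1\times\cdots\times G_f\le G$. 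I expect the main obstacle to be the divisibility step in (v): converting the hypothesis that $|G_i|$ does not divide the index into the conclusion $K_i\pi_i=G_i$. The key device is the coset identity $|G_iG|=|G_i|\cdot|G|/|G\cap G_i|$, which is what ties simplicity (via (i)) to the divisibility hypothesis.
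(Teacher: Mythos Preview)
Your proof is correct. Parts (i)--(iv) match the paper's treatment closely: the paper also declares (i) clear, derives (ii) from the Goursat isomorphism $G_1/K_1\pi_1\cong G_2/K_2\pi_2$, cites (iii) as well known, and proves (iv) by the same induction-plus-projection argument (the paper projects onto $G_2\times\cdots\times G_e$ and uses the kernel $K_1\le G_1$, which is the mirror image of your choice).

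Your argument for (v), however, is genuinely different from the paper's and in fact more elementary. The paper views $G$ as a subdirect product in $R\times L$ with $R$ subdirect in $G_1\times\cdots\times G_f$ and $L$ subdirect in $G_{f+1}\times\cdots\times G_e$; it then invokes (iii) to see that $R$ is a product of diagonals in nonabelian simples, invokes (iv) together with the size hypothesis $|G_j|<|G_i|$ to see that $L$ has none of the $G_i$ as a quotient, and combines these with (ii) to force $G=R\times L$; finally (iii) and the index-divisibility hypothesis give $R=G_1\times\cdots\times G_f$. Your route bypasses (ii)--(iv) entirely: you use only (i), simplicity of $G_i$, and the product formula $|G_iG|=|G_i||G|/|G\cap G_i|$ to show directly that each coordinate subgroup $G_i$ ($i\le f$) lies in $G$. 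A pleasant by-product is that your argument never uses the hypothesis $|G_j|<|G_i|$ for $j>f$, so you have in fact established a slightly stronger version of (v). The paper's route, on the other hand, illustrates how (ii)--(iv) fit together, which foreshadows their later use in Corollary~\ref{cor:LPoss}.
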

\begin{proof}
Part (i) is clear, while part (ii) follows from the fact that if $e=2$, then 
$$\frac{G_1}{K_1\pi_1}\cong \frac{G_2}{K_2\pi_2}.$$
Part (iii) is well-known (for example, see \cite[Theorem 4.16(iii)]{PS}), so we just need to prove (iv) and (v). 

We begin with (iv). So assume that $T$ is a nonabelian finite simple group, and that $|G_i|<|T|$ for all $i$. We prove the claim in (iv) by induction on $e$, with the case $e=1$ being trivial. So assume that $e>2$, and write $\hat{\pi}$ for the projection $\hat{\pi}:G\rightarrow G_2\times\hdots\times G_e$. Suppose that $N$ is a normal subgroup of $G$ with $G/N\cong T$. Then since $G\hat{\pi}/N\hat{\pi}$ is both a homomorphic image of $G/N\cong T$, and the subdirect product $G\hat{\pi}\le G_2\times\hdots\times G_e$, the inductive hypothesis implies that $N\hat{\pi}=G\hat{\pi}$. Then since $K_1$ is the kernel of $\hat{\pi}$, we have $|G|=|K_1||G\hat{\pi}|$ and $|N|=|K_1\cap N||G\hat{\pi}|$. Hence, $|G_1|\geq |K_1|\geq |K_1/K_1\cap N|=|T|$ -- a contradiction. This proves (iv).  

Finally, assume that the hypotheses in (v) hold. Then $G$ may be viewed as a subdirect product in $R\times L$, where $R$ is a subdirect product in $G_1\times\hdots\times G_f$, and $L$ is a subdirect product in $G_{f+1}\times\hdots\times G_e$. It then follows from (ii), (iii), and (iv) that $G=R\times L$. Moreover, since $|G_1\times\hdots\times G_f:R|$ divides $|G_1\times\hdots\times G_e:G|$, we must have $R=G_1\times\hdots\times G_f$, by (iii). This completes the proof of (v), whence the lemma.
\end{proof}

\section{Minimal transitive groups of degree $2^x3^y5^z$}\label{sec:MinTrans}
In this subsection, we investigate the structure of minimal transitive groups of degree $2^x3^y5^z$, with $0\le y,z\le 1$. Our main result reads as follows:
\begin{thm}\label{thm:MinTransTheorem} Let $G$ be an insoluble minimal transitive permutation group of degree $n=2^x3^y5^z$, with $0\le y,z\le 1$. Then:
\begin{enumerate}[(i)]
\item $G$ has at most $y+z$ nonabelian chief factors;
\item each nonabelian chief factor of $G$ is isomorphic to a direct product of copies of one of $\Alt_5$; $\Alt_6$; $\PSp_4(4)$; $\POmega^+_8(2)$; $\Alt_8$;
 $\Alt_{16}$; or
$\Lg_2(q)$, where $q$ has the form $q=2^{x_1}3^{y_1}5^{z_1}-1$ with $0\le y_1,z_1\le 1$; 
\item if $G$ has two nonabelian chief factors, then they are isomorphic to $T_1^{e_1}$ and $\Lg_2(p)^{e_2}$, where $T_1\in\{\Alt_5,\Alt_6\}$, and $p$ is a Mersenne prime; and
\item $G$ has a soluble subgroup $F$ whose orbit lengths are given in Table \ref{tab:OrbitSizes}.
\end{enumerate}\end{thm}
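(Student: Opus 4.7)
The plan is to proceed by induction on $|G|$, exploiting the following standard reduction: for any normal subgroup $N$ of $G$, the $N$-orbits form a system of blocks $\Sigma$, and $G/K$ (where $K$ is the kernel of the block action) is again minimal transitive on $\Sigma$. Indeed, any transitive subgroup of $G/K$ pulls back to a subgroup of $G$ containing $N$, and since $N$ is transitive within each block, this pullback is transitive on $\Omega$, hence equal to $G$ by minimal transitivity. This lets me peel off one layer of a chief series at a time, pairing each chief factor with a subdivision of the degree.

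The heart of the argument is the analysis of a nonabelian minimal normal subgroup $N = T^k$. The $N$-orbit $\Delta$ has size $d$ dividing $n = 2^x 3^y 5^z$, and by Lemma~\ref{lem:Subd}(iii) the point stabilizer in $N$ is a product of diagonal subgroups of the simple factors of $N$. Thus analyzing $N$ on $\Delta$ reduces to understanding a single nonabelian simple group $T$ admitting a transitive action of degree dividing $2^x 3^y 5^z$ with $y,z \le 1$. Applying the classification of finite simple groups -- in particular results on simple groups with $\{2,3,5\}$-smooth transitive degrees, Zsigmondy-prime exclusions for the bulk of Lie-type groups, and order-divisibility to rule out sporadic groups -- I would narrow $T$ down to the list in (ii). For (i), the minimal transitivity constraint combined with the inductive reduction to $G/K$ on $\Sigma$ then forces each nonabelian chief factor to absorb at least one prime factor of $3$ or $5$ from $n$, giving the bound $y+z$.

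For (iii), when $G$ has two nonabelian chief factors, both $y$ and $z$ must equal $1$, which sharpens the case analysis considerably: the bottom factor is constrained to the smallest admissible subsystem, forcing $T_1 \in \{\Alt_5, \Alt_6\}$, while the top factor must have transitive degree an exact power of $2$, and hence is $\Lg_2(p)^{e_2}$ with $p+1 = 2^m$ Mersenne. Part (iv) is then a finite verification: in each configuration arising from (ii) and (iii), I would select an explicit soluble subgroup inside each nonabelian chief factor (a Borel subgroup in $\Lg_2(q)$, a maximal soluble subgroup such as the dihedral group of order $10$ in $\Alt_5$, and so on), lift coordinate-wise via the subdirect structure of Lemma~\ref{lem:Subd}(iii), and combine with the soluble radical of $G$; the orbit lengths in the table follow by direct computation in each case.

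The principal obstacle is the classification work underlying (ii) and the orbit computations in (iv): both depend on detailed structural and subgroup information for every simple group appearing in the list, and the sharpness claims will likely force a fairly long case split. This is where the computational tools referenced in the introduction (\GAP, \Magma, and the recent classification of transitive groups of degree $48$) are expected to carry much of the weight, especially to rule out borderline configurations for $\PSp_4(4)$, $\POmega_8^+(2)$, $\Alt_{16}$, and small-parameter instances of $\Lg_2(q)$.
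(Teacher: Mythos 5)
Your reduction to $G/K$ on the block system of $L$-orbits, and your plan to classify the simple groups $T$ admitting a subgroup of $\{2,3,5\}$-smooth index via Zsigmondy primes and the Liebeck--Praeger--Saxl results, do match the first half of the paper's argument (Remark \ref{rem:MinTransRemark}, Lemma \ref{lem:RedLemma}, Propositions \ref{prop:nonPi} and \ref{prop:Pi}). But there is a genuine gap at the centre of the proposal: you assert that minimal transitivity ``forces each nonabelian chief factor to absorb at least one prime factor of $3$ or $5$'', and that is exactly the statement requiring a mechanism. The paper's mechanism is the Frattini argument (Lemmas \ref{FrattiniArgument} and \ref{lem:FratStep}, Corollary \ref{Cor:FratNew}): for each candidate pair $(T,H_e)$ one exhibits a proper subgroup $S_0<T$ such that $S_0^{\alpha}$ is $T$-conjugate to $S_0$ for every $\alpha\in\Aut(T)$, with $N_T(S_0)$ transitive on $T/H_e$ and on every coset space of $2$-power index; then $S:=S_0^e$ satisfies $N_G(S)L=G$, so $N_G(S)$ would be a \emph{proper} transitive subgroup of $G$, contradicting minimal transitivity. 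This is what rules out $2$-power orbit lengths for $L$ (hence (i) and the shape of (iii)), and the same subgroup $S$ is what produces part (iv): the soluble subgroup is taken inside $N_G(S)$ (Corollary \ref{cor:ProductActionCor}), so it surjects onto $G/L$ and has orbit lengths divisible by the number $a$ of $L$-orbits, as Table \ref{tab:OrbitSizes} requires. Your alternative --- a soluble subgroup of $L$ lifted coordinate-wise and ``combined with the soluble radical'' --- does not surject onto $G/L$; it would yield $a$ separate copies of each orbit rather than the single long orbits in the table, and when $G/L$ is insoluble (rows 12 and 22) it gives no control at all.

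Two secondary problems. First, Lemma \ref{lem:Subd}(iii) describes \emph{subdirect} products in $T^k$, but the point stabilizer $H$ in $L=T^k$ is usually not subdirect: its projections $H\pi_i$ can be proper subgroups (parabolics, $\Alt_{n-1}$, etc.), subject only to the constraint that all but at most two have $2$-power index. The correct structural statement is Corollary \ref{cor:LPoss}, and without it the case list underlying the orbit computations in (iv) cannot even be written down. Second, your justification of (iii) (``the top factor must have transitive degree an exact power of $2$, hence is $\Lg_2(p)^{e_2}$'') runs contrary to Corollary \ref{Cor:FratNew}(i): no nonabelian chief factor of a minimal transitive group of this degree can act with $2$-power orbit lengths (a minimal transitive group of $2$-power degree is a $2$-group). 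The Mersenne primes enter instead because, once one of $3$ and $5$ is spent on the bottom chief factor, the top one must have orbit length $2^a3$ or $2^a5$, and $\Lg_2(p)$ with $p$ Mersenne is what survives the Frattini test in the $2^a3$ case.
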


Now, Table \ref{tab:OrbitSizes} requires some explanation: In Theorem \ref{thm:MinTransTheorem}, we prove that a minimal transitive permutation group of degree $2^x3^y5^z$ with $0\le y,z\le 1$, has at most two nonabelian chief factors, which have to be isomorphic to direct powers of one of $\Alt_5$; $\Alt_6$; $\Alt_8$;
 $\Alt_{16}$; or
$\Lg_2(q)$, where $q$ has the form $q=2^{x_1}3^{y_1}5^{z_1}-1$. The first column of Table \ref{tab:OrbitSizes} will be a group $T^e$, where $T$ is one of these simple groups. This means that if $G$ is the minimal transitive group under consideration, then $G/R(G)$ has a minimal normal subgroup $L/R(G)$ which is isomorphic to a direct product of $e$ copies of $T$ (recall that $R(G)$ is the soluble radical of $G$). The second column gives the degree of $G$ as a permutation group, while the third column gives the orbit lengths of a soluble subgroup $F$ of $G$. Moreover, these orbit lengths will be given in one of the following ways:
\begin{description}
    \item[Rows 23--30] In these cases, the orbit lengths are given in the form of a list with entries $\sum_{i=1}^fk_i\times l_i$, with the entry $\sum_{i=1}^{f}k_i\times l_i$ meaning that $F$ has $k_i$ orbits of size $l_i$ for each $1\le i\le f$.
    \item[Rows 1--11 and 13--21] In these cases, the orbit lengths are given in the form $\alpha X_i$, $\beta Y_j$. Here, $\{X_i\}_i$ and $\{Y_j\}_j$ are positive unordered partitions whose sum is given in the fourth column of the table. For example, if $F$ is as in row 1, then $\{X_i\}_i$ is a positive unordered partition of $4$ (there are five such partitions) and $\{Y_j\}_j$ is a positive unordered partition of $2$ (there are two such partitions). Thus, there are ten possibilities for the orbit lengths of $F$. For instance, the orbit sizes ($40a$, $100a$, $100a$) correspond to the partitions $4=4$ and $2=1+1$ of $4$ and $2$, respectively.
    \item [Rows 12 and 22] The orbits lengths in these cases are given in the form $\sum_{i=0}^{e_2}\sum_{j,k}C^{(i)}_j\times 2^b3p^i5X^{(i,j)}_k$, $0\le i\le e_2$. By this notation we mean that $(C^{(i)}_j)_j$ is a positive ordered partition of a particular number $C$ for each $i$; $(X^{(i,j)}_k)_k$ is a positive ordered partition of a particular number $X$ for each $i,j$; and $F$ has $C^{(i)}_j$ orbits of size $2^b3p^i5X^{(i,j)}_k$ for each $i$, $j$, $k$. The numbers $C$, $X$, $b$, and $p$ are given in the fourth column of the table (in fact, $C:=\binom{e_2}{i}$ in each case).
\end{description}

As an example, assume that $G$ is a minimal transitive group of degree $n=2^{15}15$. If a minimal normal subgroup of $G$ is isomorphic to a direct product of copies of $\Alt_5$, then the first section of Table \ref{tab:OrbitSizes} gives us the orbit lengths of a soluble subgroup $F$ of $G$. For example, if $F$ is as in the first line of the table, then $F$ has orbit lengths $2^{12}5X_i$, $2^{13}25Y_j$, where $(X_i)_i\in\{(4),(3,1),(2,2),(1,1,1,1)\}$, and $(Y_j)_j\in\{(2),(1,1)\}$. Thus, the possible $F$-orbit lengths in this case are: ($2^{14}5$, $2^{14}25$); or  ($2^{14}5$, $2^{13}25$, $2^{13}25$); or ($2^{12}15$, $2^{12}5$, $2^{14}25$); or ($2^{12}15$, $2^{12}5$, $2^{13}25$, $2^{13}25$); or ($2^{13}5$, $2^{13}5$, $2^{14}25$); or ($2^{13}5$, $2^{13}5$, $2^{13}25$, $2^{13}25$); or ($2^{12}5$, $2^{12}5$, $2^{12}5$, $2^{12}5$, $2^{14}25$); or ($2^{12}5$, $2^{12}5$, $2^{12}5$, $2^{12}5$, $2^{13}25$, $2^{13}25$). The rest of the possibilities for the orbit lengths of $F$ can be computed in an entirely analogous way. (Although these computations seem tedious, they can be done very quickly using a computer).

\begin{table}[]
    \centering\thisfloatpagestyle{empty}
    \begin{tabular}{p{0.9cm}|p{2.5cm}|p{6.05cm}|p{6.5cm}}
      $\overline{L}$   & degree of $G$ & \small{orbit lengths of a soluble subgroup $F$} & notes\\
      \hline
     $\Alt_5^e$  & $240a$ & $10aX_i$, $100aY_j$ & $(\sum_i X_i,\sum_j Y_j)=(4,2)$\\
     & $120a$ & $5aX_i$, $50aY_j$ & $(\sum_i X_i,\sum_j Y_j)=(4,2)$\\
     & $120a$ & $10aX_i$, $50aY_j$ & $(\sum_i X_i,\sum_j Y_j)=(2,2)$\\
     & $60a$ & $5aX_i$, $25aY_j$ & $(\sum_i X_i,\sum_j Y_j)=(2,2)$\\ 
     & $60a$ & $5aX_i$, $50aY_j$ & $(\sum_i X_i,\sum_j Y_j)=(2,1)$\\
     & $30a$ & $5aX_i$, $25aY_j$ & $(\sum_i X_i,\sum_j Y_j)=(1,1)$\\
     & $2^b10a$ & $2^b5aX_i$ & $\sum_i X_i=2$; $b\in\{0,1\}$; $G/L$ soluble\\
    & $60a$ & $10a$, $50a$ & \\
    & $30a$ & $5aX_i$, $10aY_j$ & $(\sum_i X_i,\sum_j Y_j)=(2,2)$\\ 
    &  $120a$ & $5aX_i$, $25aY_j$ & $(\sum_i X_i,\sum_j Y_j)=(4,4)$\\ 
    &  $60a$ & $5aX_i$, $25aY_j$ & $(\sum_i X_i,\sum_j Y_j)=(2,2)$\\
    &  \small{$2^b(p+1)^{e_2}30a$}      & $\sum_{i=0}^{e_2}\sum_{j,k}C^{(i)}_j\times 2^b3p^i5aX^{(i,j)}_k$ & $\sum_k C^{(i)}_j=\binom{e_2}{i}$ and
    $\sum_j X^{(i,j)}_k=2$ for each $i,j$; $b\in\{0,1\}$; $G/L$ insoluble; $p$ a Mersenne prime; $1\le e_2\le e$ \\
    \hline
    $\Alt_6^e$  & $240a$ & $10aX_i$, $50aY_j$ & $(\sum_i X_i,\sum_j Y_j)=(4,4)$\\
     & $120a$ & $5aX_i$, $25aY_j$ & $(\sum_i X_i,\sum_j Y_j)=(4,4)$\\
     & $60a$ & $5aX_i$, $25aY_j$ & $(\sum_i X_i,\sum_j Y_j)=(2,2)$\\
     & $120a$ & $10aX_i$ & $\sum_i X_i=12$\\
     & $15a$ & $5aX_i$ & $\sum_i X_i=3$\\
    & $40a$ & $10aX_i$ & $\sum_i X_i=4$; $G/L$ soluble\\
    & $2^b5a$ & $5aX_i$ & $\sum_i X_i=2^b$; $b\in\{1,2\}$; $G/L$ soluble\\
        &  $60a$ & $5aX_i$, $10aY_j$ & $(\sum_i X_i,\sum_j Y_j)=(4,4)$\\ 
    &  $30a$ & $5aX_i$, $10aY_j$ & $(\sum_i X_i,\sum_j Y_j)=(2,2)$\\
        &  \small{$2^bX(p+1)^{e_2}15a$}      & $\sum_{i=0}^{e_2}\sum_{j,k}C^{(i)}_j\times 2^b3p^i5aX^{(i,j)}_k$ & $\sum_j C^{(i)}_j=\binom{e_2}{i}$, 
    $\sum_k X^{(i,j)}_k=X$ for each $i,j$; $(X,b)\in\{(4,1),(4,0),(2,0)\}$; \small{$G/L$ insoluble; $p$ a Mersenne prime; $1\le e_2\le e$}\\
   \hline
    $T^e$  & $2^b120a$ & $2^b24a+2^b96a$ & $b\in\{0,3\}$, $T\in\{\PSp_4(4),\POmega^+_8(2)\}$\\
      \hline
      $\Alt_8^e$  & $8^{e_2}960a$ & $\sum_{j=1}^3\sum_{i=0}^{e_2}as_j\binom{e_2}{i}\times 7^il_j$ & $0\le e_2\le e$; $1\le j\le 3$, where $(s_1,l_1)=(1,1)$, $(s_2,l_2)=(11,7)$, and $(s_3,l_3)=(42,21)$.\\
      & $8^{e_2}120a$ & $\sum_{j=1}^3\sum_{i=0}^{e_2}as_j\binom{e_2}{i}\times 7^il_j$ & $0\le e_2\le e$; $1\le j\le 3$, where $(s_1,l_1)=(1,1)$, $(s_2,l_2)=(5,7)$, and $(s_3,l_3)=(4,21)$.\\
      & $8^{e_2}15a$ & $\sum_{j=1}^2\sum_{i=0}^{e_2}as_j\binom{e_2}{i}\times 7^il_j$ & $0\le e_2\le e$; $1\le j\le 2$, where $(s_1,l_1)=(1,1)$ and $(s_2,l_2)=(2,7)$.\\
      \hline
      $\Alt_{16}^e$  & $15\frac{16^{e_2+1}}{2^b}a$ & $\sum_{i=0}^{e_2}a\frac{16}{2^b}\binom{e_2}{i}\times 15^{i+1}$ & $b\in\{0,1\}$, $0\le e_2\le e$\\
      \hline
      $\Lg_2(q)^e$  & $15a(q+1)^{e_2+2}$ & $\sum_{i=0}^{e_2}a\binom{e_2}{i}\times 15q^{i}+2a\binom{e_2}{i}\times 15q^{i+1}$ & $q$ a Mersenne prime, $0\le e_2\le e$\\
      & $3^{y_0}5^{z_0}a(q+1)^{e_2}$ & $\sum_{i=0}^{e_2}a\binom{e_2}{i}\times 3^{y_0}5^{z_0}q^{i}$ & $q$ a Mersenne prime, $1\le e_2\le e$\\
    & $2^{x'}15a$ & $aq3^{1-y'}5^{1-z'}+a 3^{1-y'}5^{1-z'}$ & $q=2^{x'}3^{y'}5^{z'}-1$, $\{y',z'\}=\{1,0\}$\\
      \hline
      \end{tabular}
      \vspace{0.15cm}
    \caption{Orbit lengths of a certain soluble subgroup of a minimal transitive group $G$ with $\overline{L}:=L/R(G)$ a  minimal normal subgroup of $G/R(G)$.}
    \label{tab:OrbitSizes}
\end{table}

\begin{Remark}\label{rem:MinTransRemark}
Let $G$ be as in the statement of Theorem \ref{thm:MinTransTheorem}, and suppose that we have proved the theorem for all degrees less than $n$. Let $K$ be the kernel of the action of $G$ on a block system $\Sigma=\{\Delta^g\text{ : }g\in G\}$ for $G$ in $\POmega$. Then $G^{\Sigma}\cong G/K$ is minimal transitive. Thus, we may choose a soluble subgroup $F/K$ of $G/K\cong G^{\Sigma}$ satisfying Theorem \ref{thm:MinTransTheorem}(iv) with $G$ replaced by the minimal transitive group $G^{\Sigma}$

If $K$ is soluble and acts transitively on $\Delta$, then $F$ is soluble and the $F$-orbits have lengths $|\Delta||\Sigma_i|$, as $\Sigma_i$ runs over the $F^{\Sigma}$-orbits. Suppose that the $\Sigma_i$ are as in row k in Table \ref{tab:OrbitSizes}. Then since $|\Delta|$ divides $n$, it is easy to see that by replacing $a$ by $|\Delta|a$, $F\le G$ has the required orbit lengths in order to fulfil the conclusion of (iv). (For example, if $F/K$ is as in row 2 of Table \ref{tab:OrbitSizes}, then $G$ has degree $120|\Delta|a$, and $F$ is soluble with orbit sizes $5|\Delta|aX_i$, $50|\Delta|aY_j$.) This is a useful inductive tool which we will use throughout the section.
\end{Remark}



We begin preparations toward the proof of Theorem \ref{thm:MinTransTheorem} with a reduction lemma:
\begin{lem}\label{lem:RedLemma}
Let $G$ be a counterexample to Theorem \ref{thm:MinTransTheorem} of minimal degree. Then all minimal normal subgroups of $G$ are nonabelian. Moreover, if $L$ is a minimal normal subgroup of $G$ and $K$ denotes the kernel of the action of $G$ on the set of $L$-orbits, then $K/L$ is soluble. 
\end{lem}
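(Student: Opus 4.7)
My plan is to derive both statements from the minimality of $G$ as a counterexample via a common block-system argument. For any minimal normal subgroup $N$ of $G$, the set $\Sigma$ of $N$-orbits is a $G$-invariant block system, and the quotient $G^{\Sigma}=G/K$, where $K$ is the kernel of the action on $\Sigma$, is again minimal transitive: any $K\le H\le G$ with $H/K$ transitive on $\Sigma$ must contain $N$, and so is transitive on $\Omega$ (since $N$ is transitive on each block), forcing $H=G$ by minimal transitivity. Provided $N$ is not transitive, $|\Sigma|<n$, the inductive hypothesis on the degree applies to $G^{\Sigma}$, and Remark \ref{rem:MinTransRemark} lifts a soluble subgroup $F_0/K$ realising Theorem \ref{thm:MinTransTheorem}(iv) for $G^{\Sigma}$ to one for $G$ as long as $K$ is soluble.

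For the first assertion, I would suppose for contradiction that $A$ is an abelian minimal normal, so $A$ is elementary abelian of some exponent $p\in\{2,3,5\}$. Since $G$ is insoluble and minimal transitive and $A<G$ is soluble, $A$ is not transitive on $\Omega$, and the construction above applies with $N=A$. Pick a soluble $F_0/K\le G^{\Sigma}$ fulfilling (iv), either from induction if $G^{\Sigma}$ is insoluble or by taking $F_0=G$ if $G^{\Sigma}$ is soluble. If $K$ were soluble then its preimage $F_0$ in $G$ would be soluble, with orbit lengths on $\Omega$ equal to $|\Delta|$ times those on $\Sigma$; Remark \ref{rem:MinTransRemark} (with $a\mapsto|\Delta|a$) would give (iv) for $G$, and since $A$ is abelian and $K/A$ is soluble, the nonabelian chief factors of $G$ would coincide with those of $G^{\Sigma}$, so (i)--(iii) transfer from the induction --- contradicting $G$ being a counterexample. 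Hence $K$ is insoluble. To close the loop, I would extract a nonabelian minimal normal $L$ of $G$ contained in $K$ (which exists since $K/A$ is insoluble) and rerun the block-system reduction with $N=L$, invoking the second assertion at that stage to produce the final contradiction.

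For the second assertion, let $L$ be a (nonabelian, by part 1) minimal normal subgroup of $G$, so $L\cong T^r$ for a nonabelian simple $T$. The conjugation action of $K$ on $L$ yields a homomorphism $K\to\Aut(L)$ with kernel $C_K(L)$; since $Z(L)=1$, $L\cap C_K(L)=1$, so $L\cdot C_K(L)=L\times C_K(L)\le K$, and $K/(L\times C_K(L))$ embeds in $\Out(L)$, which is soluble (by Schreier's conjecture, and in any case for each simple group appearing in Theorem \ref{thm:MinTransTheorem}(ii)). Thus $K/L$ is soluble iff $C_K(L)$ is soluble. Assuming for contradiction that $C_K(L)$ is insoluble, $C_G(L)\supseteq C_K(L)$ contains a (nonabelian) minimal normal $M$ of $G$ with $M\cap L=1$; applying the block-system reduction to $M$ (or to $L\times M$), and using the structural constraints that follow from $L$ and $M$ commuting while acting transitively on the same blocks, one builds a soluble subgroup of $G$ realising (iv) at degree $n$, contradicting $G$ being a counterexample.

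The main obstacle in both parts is this final contradiction step, where one has to convert the extra minimal normal subgroup one is forced to produce into an explicit soluble subgroup of $G$ whose orbit lengths rescale, via the factor $|\Delta|$, into an entry of Table \ref{tab:OrbitSizes} at degree $n$; this requires careful matching with the chief-factor bookkeeping in (i)--(iii) and with the allowed orbit-length patterns. In part 2, the interplay between the two commuting minimal normals $L$ and $M$ is particularly delicate, since one has to track which of $L,M$ can be regular on each block and how their point stabilisers sit diagonally in $L\times M$.
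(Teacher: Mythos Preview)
Your proposal has a real gap that you yourself flag in the last paragraph: the ``final contradiction step'' in both assertions is never actually carried out. In part 2, after reducing to ``$C_K(L)$ insoluble implies there is a second nonabelian minimal normal $M$ commuting with $L$'', you do not explain how to build the required soluble $F$ with orbit lengths matching Table~\ref{tab:OrbitSizes}; this is not routine bookkeeping, and there is no reason to expect the diagonal structure of a point stabiliser in $L\times M$ to produce one of the listed patterns. In part 1, your plan is to reduce to part 2 by extracting a nonabelian minimal normal $L$ of $G$ inside $K$; but even if part 2 then tells you that the kernel of the action on the $L$-orbits is soluble modulo $L$, this kernel is not itself soluble (since $L$ is nonabelian), so you cannot lift (iv) via Remark~\ref{rem:MinTransRemark} and the argument does not close.

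The paper sidesteps all of this with a short argument for the second assertion that works for \emph{any} minimal normal $L$, abelian or not, and uses neither Schreier's conjecture nor a second minimal normal subgroup. Take $E/L\le G/L$ minimal subject to $(E/L)(K/L)=G/L$; the standard minimal-supplement fact gives $(E/L)\cap(K/L)\le\Phi(E/L)$, which is nilpotent and hence soluble. But $E$ contains $L$ and $EK=G$, so $E^{\Sigma}=G^{\Sigma}$ is transitive, whence $E$ is transitive on $\Omega$ and minimal transitivity forces $E=G$. Thus $K/L=(E\cap K)/L$ is soluble. With this in hand the first assertion is immediate: if $L$ is abelian then $K$ itself is soluble, and your own argument (lift a soluble $F_0/K$ via Remark~\ref{rem:MinTransRemark}; transfer (i)--(iii) since $G$ and $G/K$ have the same nonabelian chief factors) already gives the contradiction. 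No orbit-length matching at degree $n$ is needed.
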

\begin{proof}
Let $L$ be a minimal normal subgroup of $G$. Also, let $\Delta$ be an $L$-orbit, and let $\Sigma:=\{\Delta^g\text{ : }g\in G\}$ be the set of $L$-orbits in $[n]$, so that $G^{\Sigma}\cong G/K$. Then $n=|\Delta||\Sigma|$, and $G^{\Sigma}$ is minimal transitive. The minimality of $G$ as a counterexample then implies that $G^{\Sigma}$ satisfies the conclusion of Theorem \ref{thm:MinTransTheorem}. 

Let $E/L$ be a subgroup of $G/L$ which is minimal with the property that $(E/L)(K/L)=G/L$. Then $(E/L)\cap (K/L)\le \Phi(E/L)$, so $(E/L)\cap (K/L)$ is soluble. But also, $EK=G$. Since $E$ contains $L$ and $E^{\Sigma}=G^{\Sigma}$ is transitive, we deduce that $E$ is transitive, whence $E=G$, since $G$ is minimal transitive. Thus, $K/L$ is soluble. This proves the second part of the lemma. 

Finally, assume that $L$ is abelian. Then $K$ is soluble. It follows that the set of nonabelian chief factors of $G$ is same as the set of nonabelian chief factors of $G/K$. Thus, $G=G^{\POmega}$ satisfies (i), (ii), and (iii) in Theorem \ref{thm:MinTransTheorem}. Moreover, since the theorem holds in $G^{\Sigma}\cong G/K$, we may choose a soluble subgroup $F/K$ of $G/K$ satisfying Theorem \ref{thm:MinTransTheorem}(iv). Then $F$ is soluble, since $K$ is soluble, and the $F$-orbits in $[n]$ have lengths $|\Delta||\Sigma_i|$, as $\Sigma_i$ runs over the $F^{\Sigma}$-orbits in $\Sigma$. Then $F$ is a subgroup of $G$ satisfying the conclusion of Theorem \ref{thm:MinTransTheorem}(iv) (see Remark \ref{rem:MinTransRemark}). Thus, (i)--(iv) in Theorem \ref{thm:MinTransTheorem} hold in $G$, contradicting the assumption that $G$ is a counterexample.
\end{proof}

By Lemma \ref{lem:RedLemma}, a minimal normal subgroup $L$ of a minimal counterexample to Theorem \ref{thm:MinTransTheorem} must be a direct product of nonabelian simple groups. Since the subnormal subgroups of a transitive permutation group of degree $n$ have orbit lengths dividing $n$, this motivates us to study the pairs ($T$, $H$) where $T$ is a nonabelian simple group, $H$ is a proper subgroup of $T$, and $|T:H|$ has the form $2^x3^y5^z$, for some $0\le y,z\le 1$. 




Our main tool in this regard is a paper of Liebeck, Praeger, and Saxl \cite{LPS}, where the finite simple groups $T$ with a maximal subgroup $M$ satisfying $\pi(|M|)=\pi(|T|)$ are classified. In order to use this result, we will first need to determine the finite simple groups $T$ with a subgroup $H$ of index $2^x3^y5^z$ as above, such that $\pi(|M|)\neq \pi(|T|)$ for some maximal subgroup $M<T$ containing $H$. 
\begin{prop}\label{prop:nonPi}
Let $T$ be a nonabelian finite simple group, and suppose that $T$ has a subgroup $H$ of index $2^{x}3^{y}5^z$, with $0\le y,z\le 1$. Let $M$ be a maximal subgroup of $T$ containing $H$, and assume that either $\pi(|T|)\neq \pi(|M|)$, or that $T$ is an alternating group of degree less than $10$. Then one of the following holds:\begin{enumerate}[(i)]
    \item $T=\Alt_n$ is an alternating group and either
    \begin{enumerate}[(a)]
        \item $n=5$ and $H$ is any subgroup of $T$;
        \item $n=6$ and $|H|=2^i3$, for some $i\le 3$;
        \item $n=7$, and either $H=\GL_3(2)$ (two classes) or $H=7:3< \GL_3(2)$; 
        \item $n=8$ and either $H=2^3:\GL_3(2)$ (two classes); or $H=2^3: (7:3)< 2^3:\GL_3(2)$ (two classes); or $H\cong \GL_3(2)< 2^3:\GL_3(2)$ (three classes); or $H\cong 7:3< 2^3:\GL_3(2)$; or 
        \item $n=9$ and $H=\Lg_2(8).3$ (two classes).
    \end{enumerate}
    \item $T=\Lg_n(q)$ and either:
    \begin{enumerate}
        \item $n=2$, $q$ has the form $q=2^{x}3^{y_1}5^{z_1}-1$, and $H$ is a subgroup of a maximal parabolic subgroup $M$ of $T$, with $|M:H|=3^{y-y_1}5^{z-z_1}$; or
        \item $(n,q)=(4,3)$, and $H=3^3:\Lg_3(3)$ (two classes).
    \end{enumerate}
    \item $T=\PSp_4(3)$ with $|T:H|=40$ (two classes); or $T=\PSp_6(2)$, and $|T:H|\in\{120,960\}$.
\end{enumerate}
\end{prop}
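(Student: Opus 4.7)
\medskip\noindent
The plan is to proceed by case analysis over the classification of finite simple groups. The hypothesis $|T:H|=2^x3^y5^z$ with $y,z\le 1$ imposes strong arithmetic restrictions: for every prime $r\ge 7$ one has $|H|_r=|T|_r$, while $|T|_3\le 3|H|_3$ and $|T|_5\le 5|H|_5$. Since $H\le M$, the same bounds hold with $M$ in place of $H$, so $\pi(|T|)\setminus\pi(|M|)\subseteq\{2,3,5\}$. Under the hypothesis $\pi(|T|)\ne\pi(|M|)$ this forces one of the tight constraints $|T|_3\le 3$, $|T|_5\le 5$, or $|M|$ of odd order (so $|T|_2=2^x$), and each of these severely limits which $T$ can occur.

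First I would dispatch the alternating and sporadic cases. For $T=\Alt_n$ with $n\le 9$, direct enumeration in \GAP of subgroups of index $2^x3^y5^z$ produces exactly the list in (i). For $n\ge 10$, Legendre's formula gives $|\Alt_n|_3\ge 3^4$ and $|\Alt_n|_5\ge 5^2$, ruling out the possibility that $3$ or $5$ is missing from $\pi(|M|)$; and $\Alt_n$ has no maximal subgroup of odd order of $\{2,3,5\}$-smooth index in this range, ruling out missing $2$. For sporadic $T$, each group has a prime divisor $r\ge 7$ whose full Sylow $r$-subgroup must sit inside $H$, and inspection of the list of maximal subgroups in the ATLAS produces no pair $(H,M)$ compatible with both $y,z\le 1$ and $\pi(|T|)\ne\pi(|M|)$.

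For $T$ of Lie type in characteristic $p$, the main tool is Zsigmondy's theorem. When $T$ has untwisted Lie rank at least $2$, outside a small list of $(T,q)$, the order $|T|$ already contains two distinct Zsigmondy primes exceeding $5$, each of which must divide $|H|$; combined with the Liebeck-Praeger-Saxl classification in \cite{LPS} of maximal subgroups with full prime spectrum, this forces $T$ into a short list of small-rank, small-characteristic groups. A direct ATLAS check leaves only $\Lg_4(3)$, $\PSp_4(3)$ and $\PSp_6(2)$, corresponding to conclusions (ii)(b) and (iii). The rank-one case $T=\Lg_2(q)$ I would handle by hand: the Borel subgroup $M$ has index $q+1$ and misses precisely the prime divisors of $q-1$, and the shape of $|T:H|$ forces $q+1=2^{x_0}3^{y_1}5^{z_1}$ with $y_1,z_1\le 1$, so $q=2^{x_0}3^{y_1}5^{z_1}-1$; then $H$ sits inside $M$ with index $3^{y-y_1}5^{z-z_1}$, producing (ii)(a). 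Exceptional groups of Lie type carry too many distinct Zsigmondy primes to admit any $\{2,3,5\}$-smooth-index proper subgroup at all, and are excluded at once.

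The main obstacle will be the $\Lg_2(q)$ analysis: one must enumerate the subgroups of the Borel of index exactly $3^{y-y_1}5^{z-z_1}$ and check that $M$ is maximal among subgroups containing $H$; and the small-group verifications for $\Lg_4(3)$, $\PSp_4(3)$, $\PSp_6(2)$ and the small alternating groups, where the multiple conjugacy classes of $H$ and fusion under outer automorphisms need careful bookkeeping. These steps are best performed by direct computation in \GAP or inspection of the ATLAS.
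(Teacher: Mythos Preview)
Your overall strategy coincides with the paper's: a CFSG case split, the observation that $\pi(|T|)\setminus\pi(|M|)\subseteq\{2,3,5\}$ forces $|T|_r\le r$ for any missing odd prime $r$, and then heavy use of \cite{LPS} together with computation in small cases. Two steps in your sketch, however, are genuine gaps rather than routine details.

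First, the result from \cite{LPS} you need is not the classification of maximal $M$ with $\pi(|M|)=\pi(|T|)$: that hypothesis is the \emph{opposite} of the one in force here, and is handled in the companion Proposition~\ref{prop:Pi}. What the paper actually uses is \cite[Corollary~6]{LPS}, which for each simple $T$ (outside a short explicit list) exhibits a set $\Pi$ of odd primes dividing $|T|$ such that every maximal subgroup of $T$ misses at least one prime from $\Pi$. The constraint $\Pi\cap\{3,5\}\neq\emptyset$ together with $|T|_r=r$ then pins $T$ down, and the paper runs through \cite[Tables~10.1--10.6]{LPS} family by family (treating parabolic and non-parabolic $M$ separately in each classical family) to extract the list. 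Second, your claim that exceptional groups ``carry too many distinct Zsigmondy primes'' and are ``excluded at once'' is false as stated: for $T={}^2F_4(q)'$ and $T={}^2B_2(q)$ the set $\Pi$ can contain $5$ as a primitive prime divisor of $q^4-1$, so these groups survive the arithmetic filter and must be eliminated by inspecting the known maximal-subgroup lists \cite{Malle2F4,SuzB2}. Similarly, in your $\Lg_2(q)$ paragraph you only treat the Borel; one must also dispose of non-parabolic $M$, where $p\mid|T:M|$ forces $p\in\{2,3,5\}$ and a short check with \cite[Table~8.1]{BHRD} is required. (Incidentally, the Borel misses primes dividing $q+1$, not $q-1$.)
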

\begin{proof}


Assume first that $T= \Alt_n$. 
We claim that in order for $|T:M|$ to have the form $|T:M|=2^{x_1}3^{y_1}5^{z_1}$ with $0\le y_1,z_1\le 1$ and $\pi(|T|)\neq \pi(|M|)$, we must have $n\le 9$.
To prove this, we first list the the possibilities for $M$:
\begin{itemize}
    \item $M=(\Alt_k\times \Alt_{n-k}).2$, for some $k\le \frac{n}{2}$.
    \item $M=\Alt_n\cap (\Sym_r\wr \Sym_s)$, where $r,s>1$ and $rs=n$. 
    \item $M=\Alt_n\cap X$, with $X\le\Sym_n$ primitive in its natural action.
\end{itemize}
In particular, if $|M|$ is odd, then $M$ must be as in the third listed case with $n$ an odd prime, $\frac{n-1}{2}$ odd, and $M=C_n\rtimes C_{\frac{n-1}{2}}$. However, in this case $|T:M|=(n-2)!$, which has the form $|T:M|=2^{x_1}3^{y_1}5^{z_1}$ with $0\le y_1,z_1\le 1$ if and only if $n\le 7$.  
So we may assume that $|M|$ is even. But then either $|T|_3=3$ or $|T|_5=5$, since $\pi(|T|)\neq \pi(|M|)$. Thus $n\le 9$, as claimed. We can now use direct computation to quickly determine the possibilities for $H$. 

So we may assume that $T$ is not an alternating group. If $T\in\{\Lg_2(8),\Lg_3(3),\Ug_3(3),\PSp_4(8)\}$ then direct computation quickly shows that $T$ has no maximal subgroups $M$ with index of the form $|T:M|=2^{x_1}3^{y_1}5^{z_1}$ with $0\le y_1,z_1\le 1$. So we may assume that $T\not\in\{\Lg_2(8),\Lg_3(3),\Ug_3(3),\\ \PSp_4(8)\}$.

Suppose next that $T=\Lg_2(p)$ with $p=2^x-1$ a Mersenne prime. Then clearly $M$ must contain a Sylow $p$-subgroup of $T$. Hence, $M$ is a maximal parabolic subgroup of $T$; $|T:M|=p+1=2^x$ and $|M:H|=3^y5^z$ (since $|M|$ is odd). 
So assume also that $T$ is not of the form $T=\Lg_2(p)$ with $p$ a Mersenne prime.
Then by \cite[Corollary 6]{LPS}, there exists a set $\Pi$ of odd prime divisors of $|T|$ such that $\Pi$ intersects $\pi(|T|)\setminus \pi(|M|)\subseteq \{2,3,5\}$ non-trivially. Moreover, the proof of  \cite[Corollary 6]{LPS} shows that either $T$ is in \cite[Table 10.1]{LPS} and $\Pi$ is as in the third column of \cite[Table 10.1]{LPS} (ignoring the third row for $T\in\{\PSp_n(q),\POmega_{n}(q)\}$ and the second row for $T=\POmega^-_n(q)$); or $T$ is in \cite[Table 10.3]{LPS} and $\Pi$ is as in the third column of \cite[Table 10.3]{LPS}; or $T$ is in \cite[Table 10.4]{LPS} and $\Pi$ is as in the second column of \cite[Table 10.4]{LPS} (except when $T=\Ug_5(2)$, where $\Pi:=\{3,5,11\}$); or $T$ is in \cite[Table 10.5]{LPS} and $\Pi$ is as in the second column of \cite[Table 10.5]{LPS} (except when $T=G_2(q)$, where $\Pi:=\{p,7,13\},\{5,7,13\}$, or $\{7,13\}$ according to whether $q\geq 5$, $q=4$, or $q=3$, respectively); or $T$ is in \cite[Table 10.6]{LPS} and $\Pi$ is as in the second column of \cite[Table 10.6]{LPS} (except when $T=M_{11},M_{12}$, or $M_{24}$, where $\Pi:=\{3,11\},\{3,11\}$, or $\{7,23\}$, respectively). Fix $r$ to be a prime in $\Pi\cap (\pi(|T|)\setminus \pi(|M|))$, so that $r\in\{3,5\}$. This implies in particular that $|T|_r=r$.

By \cite[Corollary 6]{LPS} and inspection of simple group orders, the case $r=3$ occurs only if $T=\Lg_2(p)$ with $p$ of the form $p=2^{x_1}3^{y_1}-1$, with $0\le y_1\le 1$. In this case, one can see from \cite[Table 8.1]{BHRD} that $M$ is a maximal parabolic subgroup of $T$. Since $2^{x_1}$ is the largest power of $2$ dividing $|T|$ in this case, we deduce that $x=x_1$ and that $H$ must have index dividing $3^{y-y_1}5^z$ in $M$.  

So we may assume, for the remainder of the proof, that $r=5$.
In what follows, for a prime power $q$ and a positive integer $n$ we will write $q_n$ for an arbitrary primitive prime divisor of $q^n-1$. By Zsigmondy's theorem these exists for $n>1$ and $(q,n)\neq (2,6)$. By Fermat's Little Theorem we have
\begin{align}\label{lab:ppd}
\text{$n\mid q_n-1$.}   
\end{align}

Suppose first that $G$ is a finite simple classical group, and write $T=X_n(q)$, where $X\in\{\Lg,\Ug,\PSp,\POmega^{\pm},\POmega\}$, and $q=p^f$ with $p$ prime. Then using (\ref{lab:ppd})
we can deduce from \cite[Tables 10.1--10.6]{LPS} that $T\in\{\Lg_n(q)\text{ }(n\le 5),\Ug_n(q)\text{ }(n\le 6),\PSp_n(q)\text{ }(n\le 8),\POmega^{\pm}_{n}(q)\text{ } (n\le 10),\POmega_n(q)\text{ }(n\le 11)\}$. Below, we examine these remaining cases. 

Suppose first that $T=\Lg_n(q)$, with $n\le 5$. If $n\in\{4,5\}$, then (\ref{lab:ppd}) and \cite[Tables 10.1 and 10.3]{LPS} imply that either $5$ is a primitive prime divisor of $q^4-1$ or $p=5$. If $n\in\{2,3\}$, then $5$ is a primitive prime divisor of $q^2-1$. Suppose first that $M$ is parabolic. Then we see from \cite[Tables 8.2, 8.3, 8.8, and 8.18]{BHRD} that $|T:M|$ has the form $\frac{q^n-1}{(q-1)}$ or $\frac{(q^n-1)(q^{n-1}-1)}{(q-1)(q^2-1)}$. A routine exercise then shows that only the former can take the form $2^{x_1}3^{y_1}5^{z_1}$ with $0\le y_1,z_1\le 1$, and this can only happen if $n\in\{2,4\}$ and $q$ has the form $q=2^a3^b5^c-1$. It is then easy to see that the only possibilities are either $(n,q)=(4,3)$ or $(n,q)=(2,2^{x_1}3^{y_1}5^{z_1}-1)$ with $0\le y_1,z_1\le 1$. If $(n,q)=(4,3)$, then the only possibility is $M=H=3^3:\Lg_3(3)$. If $(n,q)=(2,2^{x_1}3^{y_1}5^{z_1}-1)$ then $2^{x_1}$ is the largest power of $2$ dividing $|T|$, so $x=x_1$ and $H$ has index dividing $3^{y-y_1}5^{z-z_1}$ in $M$. 
So we may assume that $M$ is not parabolic. Then $p$ divides $|T:M|$, so $p\in\{2,3,5\}$. Also, as mentioned above, $r=5$ does not divide $|M|$. We can then quickly see from \cite[Tables 8.2, 8.3, 8.8, and 8.18]{BHRD} that the only possibility is $(n,q)=(3,2)$ with $H$ having order $7$ or $21$. Since $\Lg_3(2)\cong \Lg_2(7)$, this case is already listed in (ii)(a).
This completes the proof in the case $T=\Lg_n(q)$.

The other classical cases are entirely similar: Suppose that $T=\Ug_n(q)$, with $3\le n\le 6$. If $M$ is parabolic, then $|T:M|$ is divisible by either $\frac{(q^n-(-1)^n)(q^{n-1}-(-1)^{n-1})}{q^2-1}$ or $\prod_{i=1}^{\left\lfloor\frac{n}{2}\right\rfloor}(q^{2i+1}+1)$. It is easy to see, however, that neither $q^3+1$ not $q^5+1$ can ever have the form $2^{x_1}3^{y_1}5^{z_1}$ with $0\le y_1,z_1\le 1$. So $M$ must be non-parabolic. Then $p$ divides $|T:M|$, so $p\in\{2,3,5\}$. Also, $5$ does not divide $|M|$. We can then use the tables in \cite[Chapter 8]{BHRD} to quickly find that the only case of a maximal non-parabolic subgroups $M$ of $T$ with $|T:M|=2^{x_1}3^{y_1}5^{z_1}$ and $0\le y_1,z_1\le 1$ occurs when $(n,q)=(4,2)$. Since $\Ug_4(2)\cong \PSp_4(3)$, this case is accounted for in (iii).
This completes the proof in the case $T=\Ug_n(q)$.     

The arguments in the remaining classical cases are almost identical, so we now move on to the case where $G$ is an exceptional group of Lie type. Then $r=5$ is one of the primes occurring in the second column of the row for $T$ in \cite[Table 10.5]{LPS}. By using (\ref{lab:ppd}), we see that the only possibilities are $T\in\{{}^2F_4(q)',{}^2B_2(q)\}$, and $5$ is a primitive prime divisor of $q^4-1$. In each of these cases, the maximal subgroups of $T$ are known: see \cite{Malle2F4} for $T={}^2F_4(q)'$ and \cite{SuzB2} for $T={}^2B_2(q)$. In each case, we can quickly check that no maximal subgroup $M$ of $T$ with $\pi(|T|)\neq \pi(|M|)$ can have $|T:M|$ of the form $2^{x_1}3^{y_1}5^{z_1}$, for $0\le y_1,z_1\le 1$.

Finally, assume that $G$ is a sporadic simple group. Then $T=J_2$, by \cite[Table 10.6]{LPS}. However, a quick check of the Web Atlas \cite{WebAtlas} shows that no maximal subgroup $M$ of $J_2$ with $\pi(|T|)\neq \pi(|M|)$ can have $|T:M|$ of the form $2^x3^y5^z$, for $0\le y,z\le 1$. This completes the proof.
\end{proof}

We are now ready to determine the possibilities for the pairs ($T$,$H$), with $T$ simple, $|T:H|$ dividing $2^{x}15$, and $H$ contained in a maximal subgroup $M$ of $T$ with $\pi(|T|)=\pi(|M|)$. 
\begin{prop}\label{prop:Pi} Let $T$ be a nonabelian finite simple group, and suppose that $T$ has a subgroup $H$ of index $n=2^x3^y5^z$, with $0\le y,z\le 1$. Suppose also that $T$ is not an alternating group of degree less than $10$. Let $M$ be a maximal subgroup of $T$ containing $H$, and assume that $\pi(|T|)=\pi(|M|)$. Then one of the following holds:\begin{enumerate}[(i)]
\item $T=\Alt_n$, and either $H=\Alt_{n-1}$, or $n=16$ and $\Alt_{14}\le H\le \Alt_{14}.2$.
\item $T=\PSp_{2m}(2^f)$ with $(m,f)\in\{(4,1),(2,2)\}$, and $\POmega^{-}_{2m}(2^f) \unlhd H$.
\item $T=\PSp_{4}(q)$, with $q\in\{2,4\}$, and $\PSp_{2}(q^{2}) \unlhd H$.
\item $T=M_{11}$ and $H=\Lg_{2}(11)$;
\item $T=\POmega^{+}_{8}(2)$ and $H=\Alt_9$ (three classes, fused by a triality automorphsim); or
\item $T=M_{12}$ or $M_{24}$, and $H=M_{11}$ or $H=M_{23}$, respectively.
\end{enumerate}\end{prop}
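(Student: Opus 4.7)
The plan is to apply directly the Liebeck-Praeger-Saxl classification in \cite{LPS}, which produces an explicit finite list of pairs $(T,M)$ consisting of a nonabelian finite simple group $T$ and a maximal subgroup $M$ with $\pi(|M|) = \pi(|T|)$. This is exactly the hypothesis on $M$ here, so the first step is simply to read off this list and, for each candidate pair, compute $|T:M|$. Since $|T:M|$ must divide $|T:H|=2^x 3^y 5^z$ with $y,z\le 1$, we may discard any pair whose index either has a prime divisor outside $\{2,3,5\}$, or has $3$-part greater than $3$, or has $5$-part greater than $5$. In most cases this single preliminary test is already very restrictive.

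For each surviving $(T,M)$, the second step is to enumerate the subgroups $H\le M$ with $|M:H|\cdot |T:M|$ still of the allowed form. When $T=\Alt_n$ (with $n\ge 10$), the relevant maximal subgroups from the LPS list are the natural intransitive ones; the point stabiliser $M=\Alt_{n-1}$ forces $n=2^x3^y5^z$ with $y,z\le 1$ and yields $H=\Alt_{n-1}$, while the $n=16$ exception comes from the $(\Alt_{14}\times\Alt_2).2$ maximal of index $\binom{16}{2}=120$, which admits precisely the subgroups $\Alt_{14}$ and $\Alt_{14}.2$ as valid $H$. For the classical families $\PSp_{2m}(q)>\POmega^-_{2m}(q)$ and $\PSp_4(q)>\PSp_2(q^2)$ appearing in the LPS list, a direct computation of the index, combined with Zsigmondy's theorem applied to the orders of $T$ and $M$, forces $q\in\{2,4\}$ and $(m,f)\in\{(4,1),(2,2)\}$. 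For the sporadic and triality-orthogonal cases, the possibilities $(M_{11},\Lg_2(11))$, $(M_{12},M_{11})$, $(M_{24},M_{23})$, and $(\POmega_8^+(2),\Alt_9)$ can be read off directly from the \cite{WebAtlas} and \cite[Tables 10.4, 10.6]{LPS}.

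The third step is a subgroup search: for every $(T,M)$ retained above, one uses the Bray-Holt-Roney-Dougal tables \cite{BHRD} (or the Atlas for sporadic and very small cases) to list the conjugacy classes of subgroups $H\le M$ satisfying the $|T:H|$-condition, and to check whether the normal-inclusion statement (e.g.\ $\POmega^-_{2m}(2^f)\unlhd H$ or $\PSp_2(q^2)\unlhd H$) holds. In the small classical cases this is a finite and essentially mechanical verification.

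The main obstacle is the uniform analysis for infinite classical families: one must use Zsigmondy primes systematically to show that for large rank or large $q$, the index $|T:M|$ picks up a prime divisor outside $\{2,3,5\}$ or picks up too many factors of $3$ or $5$, thereby ruling out every infinite family except the small exceptions listed. This parallels the analysis carried out in Proposition \ref{prop:nonPi}, and once it is completed, the remaining finite bookkeeping across the sporadic groups and the handful of small classical and alternating cases is routine.
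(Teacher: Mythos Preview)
Your approach is essentially the same as the paper's: both read off the Liebeck--Praeger--Saxl list (specifically \cite[Corollary~6 and Table~10.7]{LPS}) of pairs $(T,M)$ with $\pi(|M|)=\pi(|T|)$, then winnow by the index condition, and finally descend from $M$ to $H$.

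One point deserves attention. In the alternating case you jump directly to $M=\Alt_{n-1}$ and the $n=16$, $k=14$ exception, but the LPS list allows any intransitive maximal $M=(\Sym_k\times\Sym_{n-k})\cap\Alt_n$ with $k$ at least the largest prime $\le n$. Ruling out the remaining values of $k$ requires showing that $\binom{n}{k}$ cannot have the shape $2^{x_1}3^{y_1}5^{z_1}$ with $y_1,z_1\le 1$ except in those two instances; the paper does this via the Sylvester--Schur theorem (every $\binom{n}{k}$ has a prime divisor exceeding $\min\{k,n-k\}$), which immediately forces $n-k\le 4$ since $k\ge 10$. Zsigmondy is not the right tool here, so you should name Sylvester--Schur explicitly. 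For the infinite classical families the paper in fact bypasses Zsigmondy entirely: it just writes down the exact index (e.g.\ $|\PSp_{2m}(q):\POmega^-_{2m}(q)|=q^m(q^m-1)$ and $|\PSp_4(q):\PSp_2(q^2)|=q^2(q^2-1)$) and solves the elementary Diophantine constraint directly, which is quicker than invoking primitive prime divisors. Finally, the paper treats $T=\Lg_2(p)$ with $p$ Mersenne separately at the end (since \cite[Corollary~6]{LPS} singles it out), observing that the only candidate $M$ is parabolic and then $\pi(|M|)\neq\pi(|T|)$; you should make sure this case is not silently absorbed into your general appeal to the LPS tables.
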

\begin{proof} We can quickly check using direct computation that $T$ can have no maximal subgroup $M$ of the required index when $T$ is $\Lg_{2}(8)$, $\Lg_{3}(3)$, $\Ug_{3}(3)$ or $\PSp_{4}(8)$. So we may assume that $T$ is not one of these groups. Suppose first that $T\neq \Lg_{2}(p)$ for a Mersenne prime $p$. Then using \cite[Corollary 6 and Table 10.7]{LPS}, the possibilities for $T$ and $M$ are as follows.\begin{enumerate}[(1)]
\item $T=\Alt_{n}$ and $M=\Alt_{k}\cap (\Sym_{k}\times \Sym_{n-k})$ for some $k\le n-1$ with $k\geq p$ for all primes $p\le n$. Then $|\Alt_{n}:\Alt_{n}\cap (\Sym_{k}\times \Sym_{n-k})|=\binom{n}{k}$ divides $|T:H|=2^x3^y5^z$. By a well-known theorem of Sylvester and Schur (see \cite{Syl}), $\binom{n}{k}$ has a prime divisor exceeding $\min\left\{k,n-k\right\}$. Thus $k\in\{n-1,n-2,n-3,n-4\}$, since $k\geq 10$. We then quickly see that the only possibilities are $H=\Alt_{n-1}$; or $n=16$ and $k=14$, which gives us what we need.
\item $T=\PSp_{2m}(q)$ ($m$, $q$ even) or $\POmega_{2m+1}(q)$ ($m$ even, $q$ odd), and $M=N_T(\POmega^{-}_{2m}(q))$. Now, $|N_{T}(\POmega^{-}_{2m}(q)):\POmega^{-}_{2m}(q)|\le 2$ using \cite[Corollary 2.10.4(i) and Table 2.1.D]{KL}. Hence, $|T:\POmega^{-}_{2m}(q)|$ divides $2^{x+1}3^y5^{z}$. Also, for each of the two choices of $T$ we get $|T:\POmega^{-}_{2m}(q)|=q^{m}(q^{m}-1)$. But $q^{m}(q^{m}-1)$ has the form $2^{x_1}3^{y_1}5^{z_1}$ for $0\le y_1,z_1\le 1$ if and only if $q=2^f$ with $fm\in\{2,4\}$, since $m$ is even. Since $\PSp_4(2)\cong\Sym_6$ is not simple, we deduce that ($m,f$)$\in\{(4,1),(2,2)\}$.
 \item $T=\POmega^{+}_{2m}(q)$ ($m$ even, $q$ odd) and $M=N_T(\POmega_{2m-1}(q))$. By \cite[Corollary 2.10.4 Part (i) and Table 2.1.D]{KL}, we have $|N_{T}(\POmega_{2m-1}(q)):\POmega_{2m-1}(q)|\le 2$. It follows that $\frac{1}{2}q^{m-1}(q^{m}-1)=|T:\POmega_{2m-1}(q)|$ divides $2^{x+1}3^y5^{z}$. However, since $q$ is odd, $m\geq 4$, and $0\le y,z\le 1$, this cannot be the case.
\item $T=\PSp_{4}(q)$ and $M=N_T(\PSp_{2}(q^{2}))$. Then \cite[Corollary 2.10.4 Part (i) and Table 2.1.D]{KL} gives $|N_{T}(\PSp_{2}(q^{2})):\PSp_{2}(q^{2})|\le 2$. It follows that $q^{2}(q^{2}-1)=|T:\PSp_{2}(q^{2})|$ divides $2^{x+1}3^y5^{z}$. Thus, $q\in\{2,4\}$. We then use direct computation to see that the only possibility is $\PSp_{2}(q^{2}) \unlhd H$.
\item In each of the remaining cases (see \cite[Table 10.7]{LPS}), we are given a pair ($T$, $Y$), where $T$ is $\Lg_{2}(8)$, $\Lg_{3}(3)$, $\Lg_{6}(2)$, $\Ug_{3}(3)$, $\Ug_{3}(3)$, $\Ug_{3}(5)$, $\Ug_{4}(2)$, $\Ug_{4}(3)$, $\Ug_{5}(2)$, $\Ug_{6}(2)$, $\PSp_{4}(7)$, $\PSp_{4}(8)$, $\PSp_{6}(2)$, $\POmega^{+}_{8}(2)$, $G_{2}(3)$, $^{2}F_{4}(2)'$, $M_{11}$, $M_{12}$, $M_{24}$, $HS$, $McL$, $Co_{2}$ or $Co_{3}$, and $Y$ is a subgroup of $T$ containing $H$. Apart from when $T=\POmega^{+}_{8}(2)$, $M_{11}$, $M_{12}$, and $M_{24}$, we find that $|T:Y|$ does not divide $2^x3^y5^z$, so we get a contradiction in all other cases. When $T=\POmega^{+}_{8}(2)$, the only possibilities are $H=\Alt_9$ of index $2^615$ (three $T$-conjugacy classes fused under the triality automorphism). 
When $T=M_{11}$, $M_{12}$, or $M_{24}$, the only possibilities for $H$ are $H=\Lg_{2}(11)\le M_{11}$ (of index $12$), $H=M_{11}\le M_{12}$ (of index $12$), or $H=M_{23}\le M_{24}$ (of index $24$).\end{enumerate}
Finally, assume that $T=\Lg_{2}(p)$, for some Mersenne prime $p$, and let $M$ be a maximal subgroup of $T$ containing $H$. Then, since $|T:M|$ divides $|T:H|=2^x3^y5^z$ with $0\le y,z\le 1$, $M$ must be parabolic (see \cite[Table 8.1]{BHRD}). But then $\pi(|T|)\neq \pi(|M|)$, since $p+1$ is the highest power of $2$ dividing $|T|$ -- a contradiction. This completes the proof.\end{proof}

This completes our analysis of the pairs $(T,H)$ with $T$ a nonabelian simple group and $H$ a subgroup of $T$ of index $|T:H|=2^x3^y5^z$, with $0\le y,z\le 1$. The possibilities are listed in Proposition \ref{prop:nonPi}(i)--(iii) and Proposition \ref{prop:Pi}(i)--(vi).

In particular, if $G$ is a counterexample to Theorem \ref{thm:MinTransTheorem} of minimal degree, $L=T^e$ is a minimal normal subgroup of $G$, and $H$ is a point stabilizer in $L$, then we know the possibilities for the group $T$, and the intersections of $H$ with the $i$th coordinate subgroups in $L$.
Our next task is to use this information to find out more about the structure of $H$.

One of our main tools for doing this is the \emph{Frattini argument}. The proof is well-known, but we couldn't find a reference so we include a proof here.
\begin{lem}\label{FrattiniArgument} Let $G$ be a finite group, and let $L$ be a normal subgroup of $G$. Let $A\le \Aut(L)$ be the image of the induced action of $G$ on $L$ (by conjugation). Suppose that $H$ is a subgroup of $L$ with the property that $H$ and $H^{\alpha}$ are $L$-conjugate for each $\alpha\in A$. Then $G=N_G(H)L$.\end{lem}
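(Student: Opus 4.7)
The plan is to run the usual Frattini-style argument, adapted to the hypothesis that $H$-conjugacy in $L$ is preserved under every induced outer action coming from $G$.

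First I would pick an arbitrary element $g\in G$ and consider the automorphism $\alpha_g\in A$ of $L$ given by conjugation by $g$; this makes sense because $L\trianglelefteq G$, so conjugation by $g$ sends $L$ to itself and in particular sends the subgroup $H\le L$ to the subgroup $H^g=H^{\alpha_g}\le L$. By the standing hypothesis, $H^{\alpha_g}$ is $L$-conjugate to $H$, so there exists $\ell\in L$ with $H^g=H^\ell$.

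Next I would observe that this equation rearranges to $H^{g\ell^{-1}}=H$, so the element $g\ell^{-1}$ lies in $N_G(H)$. Therefore $g=(g\ell^{-1})\cdot\ell\in N_G(H)\,L$. Since $g\in G$ was arbitrary, this proves $G\subseteq N_G(H)L$, and the reverse inclusion is immediate from $N_G(H),L\subseteq G$, giving $G=N_G(H)L$ as required.

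There is no real obstacle here: the argument is purely bookkeeping once one has parsed what the hypothesis says. The only point that needs a moment's care is that we are allowed to replace the abstract automorphism $\alpha_g\in A$ by actual conjugation by $g$ inside $G$, which is fine because $A$ is defined as the image of the conjugation action of $G$ on $L$, so $H^{\alpha_g}$ and $H^g$ denote the same subgroup of $L$. This is exactly the same calculation that proves the standard Frattini argument for Sylow subgroups, except that $L$-conjugacy of the Sylow $p$-subgroups is replaced, as an input, by the stipulated $L$-conjugacy of the $A$-translates of $H$.
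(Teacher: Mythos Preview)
Your proof is correct and follows exactly the same route as the paper's: pick $g\in G$, use the hypothesis to get $H^{g}=H^{\ell}$ for some $\ell\in L$, and conclude $g\ell^{-1}\in N_G(H)$, hence $g\in N_G(H)L$. The paper's version is just terser, omitting the explicit identification of $H^{\alpha_g}$ with $H^g$ that you spell out.
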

\begin{proof} Let $g\in G$. Then $H^g=H^l$ for some $l\in L$, by hypothesis. Hence, $gl^{-1}\in N_{G}(H)$, so $g\in N_{G}(H)L$, and this completes the proof.\end{proof}

With the Frattini argument in mind, the next lemma will be crucial.
\begin{lem}\label{lem:FratStep} Let $T$ be a nonabelian finite simple group, and suppose that $T$ has a subgroup $H$ of index $n:=2^x3^y5^z$, with $0\le y,z\le 1$. Assume also that:
\begin{itemize}
    \item If $T=\Alt_5$, then $H$ is either transitive in its natural action, or $H=\Alt_4$;
    \item if $T=\Alt_6$, then $H$ is either transitive in its natural action, or $H=\Alt_5$ is a natural point stabilizer in $T$;
    \item if $T= \Lg_{2}(q)$, with $q$ of the form $q=2^{x_1}3^{y_1}5^{z_1}-1$, then $y=y_1$ and $z=z_1$;
    \item if $T=\PSp_{4}(4)$, then $H\neq \PSp_2(16)$ and $H\neq\PSp_2(16).2$;
    \item if $T=\POmega^{+}_{8}(2)$, then $H\neq\Alt_9$ (three classes);
    \item if $T=\Alt_{16}$, then $H\neq \Alt_{14}$ and $H\neq \Alt_{14}.2$; and
    \item $(T,H)$ is not $(\Alt_8,H)$, with $H$ of shape $7:3$, $2^3:(7:3)$, $\GL_3(2)$, or $2^3:\GL_3(2)$.
\end{itemize}
 Denote by $\Sigma$ the set of right cosets of $H$ in $T$. 
Then there exists a proper subgroup $S_0$ of $T$ with the following properties:\begin{enumerate}[(i)]
\item $S_0$ and $S_0^{\alpha}$ are conjugate in $T$ for each $\alpha\in \Aut(T)$;
\item $N_T(S_0)^{\Sigma}$ is transitive;
\item $N_T(S_0)$ acts transitively on the set of cosets of any subgroup of $T$ of $2$-power index.
\end{enumerate}\end{lem}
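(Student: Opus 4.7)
My plan is a case analysis on the isomorphism type of the simple group $T$. Combining Propositions \ref{prop:nonPi} and \ref{prop:Pi} with the bullet-point hypotheses narrows the admissible pairs $(T, H)$ to a manageable list: the small alternating groups $\Alt_n$ for $5 \le n \le 9$ with specific $H$; the infinite family $T = \Alt_n$ with $n \geq 10$ (and $n$ of the form $2^x 3^y 5^z$) and $H$ essentially the natural point stabilizer $\Alt_{n-1}$; $T = \Lg_2(q)$ with $q = 2^{x_1}3^{y_1}5^{z_1} - 1$ and $H$ a proper subgroup of a maximal parabolic $M$; a handful of exceptional classical groups ($\Alt_{16}$ with $H = \Alt_{15}$, $\PSp_6(2)$, $\PSp_8(2)$, $\Lg_4(3)$); and the Mathieu groups $M_{11}, M_{12}, M_{24}$.

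For each case the natural choice is to take $S_0$ to be a Sylow $p$-subgroup of $T$ for a suitable odd prime $p$, typically a prime dividing $|H|$ so that a conjugate of $S_0$ lies in $H$. Condition (i) is then automatic, since $\Aut(T)$ permutes the Sylow $p$-subgroups of $T$ and they form a single $T$-conjugacy class. Conditions (ii) and (iii) both amount to verifying the identity $T = N_T(S_0) \cdot K$ for a prescribed subgroup $K$ (with $K = H$ for (ii), and $K$ a subgroup of $2$-power index for (iii)): this is an elementary order computation using the explicit structures of $N_T(S_0)$ and of $K$, read off from \cite{BHRD}, the Atlas \cite{WebAtlas}, or Propositions \ref{prop:nonPi} and \ref{prop:Pi}. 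For (iii) I first enumerate the subgroups of $T$ of $2$-power index; these are essentially only natural point stabilizers in $\Alt_n$ when $n$ is a $2$-power, maximal parabolics in $\Lg_2(q)$ when $q+1$ is a $2$-power, and a few small sporadic instances.

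The main obstacle is the verification of condition (i) for groups $T$ with exceptional outer automorphisms that fuse $T$-classes of proper subgroups, notably $T = \Alt_6$. In such cases I must choose $S_0$ to be a full Sylow subgroup rather than a smaller cyclic subgroup, since a Sylow subgroup of given order always constitutes a single $T$-class stable under $\Aut(T)$; for example, when $T=\Alt_6$ and $H=\Alt_5$, a Sylow $3$-subgroup $P\cong C_3\times C_3$ has $N_T(P)$ of order $36$ which is transitive on $6$ points, giving (ii) and (iii) at once. A secondary obstacle is the proliferation of subcases in the small-alternating and Mathieu families, handled by direct inspection or by \GAP\ computation. For the infinite families $T = \Alt_n$ ($n \geq 10$) and $T = \Lg_2(q)$, uniform constructions are available: a long cycle (for $n$ odd) or a regular involution (for $n$ even) in $\Alt_n$, whose normalizer in $\Alt_n$ is visibly transitive on $n$ points; and a Sylow subgroup of the torus in $\Lg_2(q)$ lying inside $M$, whose normalizer is a dihedral subgroup with the required transitivity on the cosets of $H$.
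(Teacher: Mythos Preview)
Your overall strategy---case analysis on $T$ using Propositions~\ref{prop:nonPi} and~\ref{prop:Pi}, then choosing $S_0$ to be a Sylow subgroup so that condition~(i) is automatic---is sound in spirit and close to the paper's approach. However, two of your specific choices for $S_0$ fail.

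First, and most seriously, the $\Lg_2(q)$ case. Under the hypotheses of the lemma one has $H=M$, the maximal parabolic of index $q+1$. You propose taking $S_0$ to be a Sylow $p$-subgroup of the (split) torus inside $M$, for some odd prime $p\mid (q-1)/2$. But then $N_T(S_0)=D_{q-1}$ has order $q-1<q+1$, so it cannot act transitively on the $q+1$ cosets of $H$; condition~(ii) fails outright. Taking instead the Sylow $q$-subgroup gives $N_T(S_0)=M=H$, which fixes the trivial coset. The only way to get transitivity on $q+1$ points is to use a subgroup whose normalizer contains the non-split torus, and the paper does exactly this: it takes $S_0$ to be a $D_{q+1}$ maximal subgroup. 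Note in particular that when $q$ is a Mersenne prime (the case of greatest importance elsewhere in the paper), $q+1$ is a $2$-power and \emph{no} odd-prime Sylow subgroup of $T$ can work, so your restriction to odd $p$ is not merely a poor choice but an essential obstruction.

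Second, for $T=\Alt_n$ with $n$ even, your ``regular involution'' is a product of $n/2$ transpositions and hence lies in $\Alt_n$ only when $4\mid n$. For $n\in\{10,30\}$ (i.e.\ $n\equiv 2\pmod 4$) it is an odd permutation and your $S_0$ is not even a subgroup of $T$. The paper instead takes $S_0$ to be the direct product of $n/(3^y5^z)$ disjoint cycles of length $3^y5^z\in\{3,5,15\}$; this always lies in $\Alt_n$, its $\Alt_n$-normalizer is visibly transitive on $n$ points, and $\Aut(T)$-invariance of its $T$-class follows since all such products of cycles of a given type are $\Alt_n$-conjugate (the centralizer in $\Sym_n$ is not contained in $\Alt_n$).
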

\begin{proof} Note first that if $T=\Alt_n$ and $K$ is a subgroup of $T$ of $2$-power index, then $K=\Alt_{n-1}$ and $n$ is a power of $2$, by Proposition \ref{prop:nonPi} and \ref{prop:Pi}. Thus, if $T=\Alt_n$, with $n$ not a power of $2$, and $H$ is transitive, then setting $S_0$ to be a natural point stabilizer $S_0:=\Alt_{n-1}$ suffices.
Also, if $|T:H|$ is a power of $2$, then setting $S_0$ to be a Sylow $2$-subgroup of $T$ gives us what we need. So we may assume that we are in neither of these cases.

By Proposition \ref{prop:nonPi} and \ref{prop:Pi}, the remaining possibilities for the pair $(T,H)$ are as follows:\begin{enumerate}[(1)]
\item $(T,H)=(\Alt_{n},\Alt_{n-1})$, with $n=2^x3^y5^z$ and $y+z\neq 0$. 
Set $S_0=\langle (1,\hdots,3^y5^z), (3^y5^z+1,\hdots,2\times 3^y5^z),\hdots,(n-3^y5^z+1,\hdots,n)\rangle$. Then $N_{T}(S_0)^{\Sigma}$ is transitive, and $S_0$ and $S_0^{\alpha}$ are $T$-conjugate for all $\alpha\in\Aut(T)$ (this includes the case $n=6$, when $\Out{(\Alt_{6})}$ has order $4$). Thus, (i) and (ii) are satisfied. Property (iii) is vacuously satisfied since $T$ has no proper subgroup of $2$-power index in this case, as mentioned at the beginning of the proof.
\item $T=\PSp_{8}(2)$, and $\POmega^{-}_{8}(2) \unlhd H$. Note that property (iii) is vacuously satisfied in each case, since $T$ has no proper subgroup of $2$-power index. Moreover, $T$ has a maximal parabolic subgroup $S_0$ of shape $2^{10}.GL_4(2)$ satisfying (i) and (ii).

\item $T= \Lg_4(3)$, $H=3^3:\Lg_3(3)$ (two classes). In this case, take $S_0$ to be the unique maximal subgroup of $T$ of shape $\Lg_2(9).4.2$. Then it is easily seen that (i) and (ii) hold, for either of the two non-conjugate choices for $H$. Property (iii) holds since $T$ has no proper subgroups of $2$-power index.

\item $T= \PSp_4(3)$, and $|T:H|=40$ (two classes). Here, the unique maximal subgroup $S_0$ of $T$ of index $27$ has properties (i) and (ii) for each of two non-conjugate choices for $H$. Property (iii) again holds since $T$ has no proper subgroups of $2$-power index.

\item $T= \PSp_6(2)$, and $|T:H|\in\{120,960\}$. Taking $S_0$ to be the unique maximal subgroup of $T$ of index $36$ gives us (i) and (ii) in either of the cases $|T:H|=120$ and $|T:H|=960$. Property (iii) holds since $T$ has no proper subgroups of $2$-power index.
\item $(T,H)=(M_{12},M_{11})$ or $(M_{24},M_{23})$: In each case, let $S_0$ be a subgroup of $T$ generated by a fixed point free element of order $3$. When $T=M_{12}$, $N_{T}(S_0)\cong \Alt_{4}\times \Sym_{3}$ (see \cite{WebAtlas}) is a maximal subgroup of $T$, and acts transitively on the cosets of $H$. Thus, (ii) holds. Also, outer automorphisms of $M_{12}$ fix the set of $T$-conjugates of $S_0$, so $S_0$ and $S_0^{\alpha}$ are $T$-conjugate for all $\alpha\in\Aut(T)$. Hence, (i) holds.
When $T=M_{24}$, $N_{T}(S_0)$ has order $1008$, and acts transitively on the cosets of $H$ (using \Magma \cite{MR1484478}, for example). Also, $\Out{(T)}$ is trivial. Thus, (i) and (ii) are again satisfied. Property (iii) is vacuously satisfied in each case, since $T$ has no proper subgroup of $2$-power index.
\item $T=\Lg_{2}(q)$, with $q$ of the form $q=2^{x_1}3^{y_1}5^{z_1}-1$, $0\le y_1,z_1\le 1$. Since we are assuming that $y=y_1$ and $z=z_1$, $H$ must be a maximal parabolic subgroup of $T$ (see \cite[Table 8.1]{BHRD}).
Let $S_0$ be a $D_{q+1}$ maximal subgroup of $T$. Then $S_0$ and $S_0^{\alpha}$ are $T$-conjugate for all $\alpha\in\Aut(T)$, so (i) holds. Clearly, (ii) also holds. The group $T$ only has a proper subgroup of $2$-power index if $y_1=z_1=0$, and in this case the only such subgroup is $H$. Thus, property (iii) is also satisfied.
\end{enumerate}\end{proof}

\begin{cor}\label{Cor:FratNew}
Let $G$ be a minimal transitive group of degree $n=2^x3^y5^z$, where $0\le y,z\le 1$. Suppose that $G$ has a nonabelian minimal normal subgroup $L=T^e$, and let $H$ be the intersection of $L$ with a point stabilizer in $G$. Write $|L:H|=2^{x_0}3^{y_0}5^{z_0}$, where $x_0\le x$, $y_0\le y$, $z_0\le z$. Then:
\begin{enumerate}[(i)]
    \item Either $y_0$ or $z_0$ is non-zero;
    \item Suppose that $H=H\pi_1\times\hdots\times H\pi_e$, and all but one of the groups $H\pi_i$, say $H\pi_e$, has $2$-power index in $T$. Then ($T$,$H\pi_e$) cannot satisfy the hypothesis of Lemma \ref{lem:FratStep}.  
\end{enumerate}
\end{cor}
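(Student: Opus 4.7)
I aim to prove both parts by a common Frattini-style strategy: produce a proper, nontrivial subgroup $S\le L$ whose $L$-conjugacy class is stabilized by $\Aut(L)$, and satisfying $L=N_L(S)H$. Applying Lemma~\ref{FrattiniArgument} to the normal subgroup $L\triangleleft G$ yields $G=N_G(S)L=N_G(S)N_L(S)H=N_G(S)H\le N_G(S)K$, where $K\le G$ is the point stabilizer with $K\cap L=H$. Hence $N_G(S)$ is transitive on $[n]$, so the minimal transitivity of $G$ forces $N_G(S)=G$, i.e.\ $S\triangleleft G$. Since $L$ is a minimal normal subgroup of $G$ and $1\ne S\ne L$, this is the desired contradiction.

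\textbf{Step for (i).} I assume for contradiction that $y_0=z_0=0$, so $|L:H|$ is a $2$-power, and I take $S=P$ to be a Sylow $2$-subgroup of $L$. Sylow's theorem makes the $\Aut(L)$-stability of the conjugacy class of $P$ automatic. Because $|L:H|$ is a $2$-power, $|H|_{2'}=|L|_{2'}$, and hence $|PH|\ge |P|\cdot|H|/|P\cap H|\ge |L|_2\cdot|H|_{2'}=|L|$; thus $L=PH$. The common argument then gives $P\triangleleft G$; since $L$ is a direct power of a non-abelian simple group, $|L|$ is even and not a $2$-power, so neither $P=1$ nor $P=L$ can hold.

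\textbf{Step for (ii).} Assume for contradiction that $(T,H\pi_e)$ satisfies the hypothesis of Lemma~\ref{lem:FratStep}, and let $S_0\le T$ be the proper subgroup that lemma provides. I set $S:=S_0^e\le T^e=L$. Since $T$ is non-abelian simple, $\Aut(L)=\Aut(T)\wr\Sym_e$; coordinate permutations fix $S$ setwise, and Lemma~\ref{lem:FratStep}(i) shows each coordinate-wise $\Aut(T)$-action sends $S_0$ to a $T$-conjugate, so every $\Aut(L)$-conjugate of $S$ is $L$-conjugate to $S$. Using the direct product hypothesis $H=\prod_i H\pi_i$, I obtain $N_L(S)H=\prod_i \bigl(N_T(S_0)\cdot H\pi_i\bigr)$. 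For $i=e$, Lemma~\ref{lem:FratStep}(ii) gives $N_T(S_0)\cdot H\pi_e=T$; for $i\ne e$, $H\pi_i$ has $2$-power index in $T$, so Lemma~\ref{lem:FratStep}(iii) gives $N_T(S_0)\cdot H\pi_i=T$. Hence $L=N_L(S)H$, and the common argument yields a contradiction.

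\textbf{Main obstacle.} The key subtlety is (ii): the direct product hypothesis on $H$ is essential because it is precisely what lets us paste coordinate-wise coverings $N_T(S_0)H\pi_i=T$ into the global identity $N_L(S)H=L$. Lemma~\ref{lem:FratStep}(iii) is what permits a \emph{single} subgroup $S_0$ (namely the one furnished for the distinguished coordinate $e$) to simultaneously cover every other $2$-power index projection; without it, per-coordinate choices of $S_0$ could not be assembled into the single $\Aut(L)$-invariant subgroup $S_0^e$ needed to invoke the Frattini argument uniformly.
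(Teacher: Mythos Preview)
Your proof is correct and follows essentially the same approach as the paper: both construct a proper subgroup $S\le L$ (a Sylow $2$-subgroup for (i), and $S_0^e$ for (ii)) whose $\Aut(L)$-conjugacy class is a single $L$-class, then use the Frattini argument to obtain a proper transitive subgroup $N_G(S)$, contradicting minimal transitivity. The only cosmetic difference is that you deduce transitivity of $N_G(S)$ directly from $G=N_G(S)H\le N_G(S)K$, whereas the paper phrases it as $N_G(S)$ being transitive both on an $L$-orbit and on the set of $L$-orbits; these are equivalent formulations of the same argument.
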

\begin{proof}
Suppose that either $|L:H|$ is a power of $2$; or $H=H\pi_1\times\hdots\times H\pi_e$ where all but one of the groups $H\pi_i$, say $H\pi_e$, has $2$-power index in $T$, and ($T$,$H\pi_e$) satisfies the hypothesis of Lemma \ref{lem:FratStep}. Then we claim that there exists a proper subgroup $S$ of $L$ with $S$ acting transitively on the cosets of $H$ in $L$, and $LN_G(S)=G$ ($\ast$). In particular, $N_G(S)$ is a proper subgroup of $G$, since $L$ is a minimal normal subgroup of $G$. But then $N_G(S)<G$ acts transitively both on an $L$-orbit, and on the set of $L$-orbits, so $N_G(S)$ is transitive. This contradicts the minimal transitivity of $G$. Thus, it will suffice to prove the existence of such a subgroup $S$. 

Now, if $|L:H|$ is a power of $2$, then $S\in\Syl_2(L)$ satisfies ($\ast$), by Lemma \ref{FrattiniArgument} and the fact that $(|L:H|,|L:S|)=1$. 
So assume that $H=H\pi_1\times\hdots\times H\pi_e$, and all but one of the groups $H\pi_i$, say $H\pi_e$, has $2$-power index in $T$. Assume that the pair ($T$,$H\pi_e$) satisfies the hypothesis of Lemma \ref{lem:FratStep}, and let $S_0$ be the subgroup of $T$ exhibited therein. Set $S:=S_0^e<L$.
Then $S$ acts transitively on the cosets of $H$ in $L$. Moreover, since $\Aut(L)\cong \Aut(T)\wr\mathrm{Sym}_e$,
Lemma \ref{lem:FratStep}(i) implies that $S$ and $S^{\alpha}$ are $L$-conjugate for all $\alpha\in \Aut(L)$. Thus, $LN_G(S)=G$ by Lemma \ref{FrattiniArgument}, so $S$ satisfies ($\ast$), and this completes the proof.
\end{proof}


In what follows, recall from Definition \ref{Def:Subd} that \emph{a subdirect product of the form} $G=\frac{1}{n}(G_1\times\hdots\times G_e)$ is a subdirect product of index $n$ in $G_1\times \hdots\times G_e$ containing $[G_1,G_1]\times\hdots\times [G_e,G_e]$.
\begin{cor}\label{cor:LPoss}
Let $G$ be a minimal transitive group of degree $n=2^x3^y5^z$, where $0\le y,z\le 1$. Suppose that $G$ has a nonabelian minimal normal subgroup $L=T^e$, and write $\pi_i:L\rightarrow T$ for the $i$th coordinate projections. Let $H$ be the intersection of $L$ with a point stabilizer in $G$. Write $|L:H|=2^{x_0}3^{y_0}5^{z_0}$, where $x_0\le x$, $y_0\le y$, $z_0\le z$. Then one of the following holds: 
\begin{enumerate}[(1)]
\item $T=\Alt_n$ with $n\in\{5,6\}$ and one of the following holds:
\begin{enumerate}
    \item $H$ is $\mathrm{Sym}_e$-conjugate to $T^{e-2}\times H_{e-1}\times H_e$, where $H_{e-1}$ and $H_e$ are subgroups of $T$ with $|T:H_{e-1}|=2^{x_{e-1}}3$, $|T:H_{e}|=2^{x_e}5$, and $x_0=x_{e-1}+x_e$;
    \item $H$ is $\mathrm{Sym}_e$-conjugate to $T^{e-1}\times H_e$, where $H_e\le T$ is not transitive in its natural action; $H_e$ is not $T$-conjugate to a natural point stabilizer $\Alt_{n-1}$; and $|T:H_e|=|L:H|$;
    \item $T=\Alt_5$ and $H$ is $\mathrm{Sym}_e$-conjugate to $T^{e-2}\times J$, where $J<T^2$ is a subdirect product of the form $J=\frac{1}{2}(\Sym_3\times D_{10})< T^2$; or
    \item $T=\Alt_5$ and $H$ is $\mathrm{Sym}_e$-conjugate to $T^{e-2}\times J$, where $J$ is a diagonal subgroup of $T^2$.
\end{enumerate}
\item $T=\Alt_8$ and $H$ is $\mathrm{Sym}_e$-conjugate to $T^{e_1}\times H_{e_1+1}\times\hdots\times H_{e-1}\times H_e$, where $H_e$ is one of the subgroups of shape $7:3$, $2^3:(7:3)$, $\GL_3(2)$, or $2^3:\GL_3(2)$ in $T$; $H_i$ is $T$-conjugate to $\Alt_7$ for all $e_1+1\le i\le e-1$; and $0\le e_1\le e-1$.
\item $T=\Alt_{16}$ and $H$ is $\mathrm{Sym}_e$-conjugate to $T^{e_1}\times H_{e_1+1}\times\hdots\times H_{e-1}\times H_e$, where $H_e$ is one of the subgroups of shape $\Alt_{14}$ or $\Alt_{14}.2$ in $T$; $H_i$ is $T$-conjugate to $\Alt_{15}$ for all $e_1+1\le i\le e-1$; and $0\le e_1\le e-1$.
\item $T=\Lg_2(q)$, with $q$ of the form $q=2^{x'}3^{y'}5^{z'}-1$, $0\le y',z'\le 1$, and one of the following holds:
\begin{enumerate}
    \item $y'=z'=0$; $5\mid q-1$; $H$ is $\mathrm{Sym}_e$-conjugate to $T^{e_1}\times P_1\times\hdots\times P_{e_2}\times H_{e-1}\times H_e$, where each $P_i$ is a maximal parabolic subgroup of $T$; $H_{e-1}$ [respectively $H_e$] is a subgroup of index $3$ [resp. $5$] in a maximal parabolic subgroup of $T$; and $e_1+e_2=e-2$;
    \item $y'=z'=0$ and $H$ is $\mathrm{Sym}_e$-conjugate to $T^{e_1}\times L_1$, where $L_1$ is a subdirect product of the form $L_1=\frac{1}{t}(L_2\times H\pi_e)$; $L_2$ is a subdirect product of the form $L_2=\frac{1}{s}(P_1\times\hdots\times P_{e_2})$; each $P_i$ is a maximal parabolic subgroup of $T$; $H\pi_e$ is a subgroup of index $r$ in a maximal parabolic subgroup of $T$; $rst=3^{y_0}5^{z_0}$; and $e_1+e_2=e-1$; or
    \item $y'+z'=1$, and $H$ is $\mathrm{Sym}_e$-conjugate to $T^{e-1}\times \left(q:\frac{q-1}{2r}\right)$, where $r>1$ divides $q-1$, and $3^{y'}5^{z'}r=3^{y_0}5^{z_0}$.
\end{enumerate}
\item $T=\PSp_4(4)$ and $H$ is $\mathrm{Sym}_e$-conjugate to $T^{e-1}\times H_e$, where $H_e\in\{\PSp_2(16)\text{ }\mathrm{ (}\text{two }T\text{-classes}\mathrm{) }, \\ \PSp_2(16).2\text{ }\mathrm{ (}\text{two }T\text{-classes}\mathrm{)}\}$.
\item $T=\POmega^+_8(2)$ and $H$ is $\mathrm{Sym}_e$-conjugate to $T^{e-1}\times H_e$, where $H_e=\Alt_9$ (three $T$-classes, fused under a triality automorphism).
\end{enumerate}
\end{cor}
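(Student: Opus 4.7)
The strategy is to analyse $H\le L=T^e$ through its coordinate projections $H\pi_i$, classified via Propositions \ref{prop:nonPi} and \ref{prop:Pi}, and then extract structural information by alternating Lemma \ref{lem:Subd} with Corollary \ref{Cor:FratNew}. Since $H\le H\pi_1\times\cdots\times H\pi_e\le L$, each index $|T:H\pi_i|$ divides $|L:H|=2^{x_0}3^{y_0}5^{z_0}$ and hence has the form $2^{a_i}3^{b_i}5^{c_i}$ with $0\le b_i,c_i\le 1$. Thus every pair $(T,H\pi_i)$ appears in Propositions \ref{prop:nonPi}/\ref{prop:Pi}, and as $T$ is fixed by $L=T^e$ this narrows $T$ to a finite list of candidates.

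The next step is to eliminate the extraneous candidates $T\in\{\Alt_7,\Alt_9,\PSp_4(3),\PSp_6(2),\PSp_8(2),\Lg_4(3),M_{11},M_{12},M_{24}\}$. In each case some prime $>5$ divides $|T|$, so it cannot divide $[H\pi_1\times\cdots\times H\pi_e:H]\mid|L:H|$, and Lemma \ref{lem:Subd}(v) splits off every full-$T$ projection as $H=T^{e_1}\times L'$ with $L'$ subdirect in the proper projections. If $L'$ is carried by a single projection $H\pi_e$, Corollary \ref{Cor:FratNew}(ii) forces $(T,H\pi_e)$ into the exception list of Lemma \ref{lem:FratStep}; inspection of that list shows none of these $T$ have admissible proper subgroups there, giving a contradiction. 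If $L'$ involves two or more proper projections, Lemma \ref{lem:Subd}(ii)--(iv) forces either a direct product (introducing $3^2$ or $5^2$ into $|L:H|$) or a diagonal subgroup (reintroducing into $|L:H|$ a prime divisor of $|T|$ outside $\{2,3,5\}$), again impossible.

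For the surviving families $T\in\{\Alt_8,\Alt_{16},\PSp_4(4),\POmega^+_8(2)\}$, a prime $>5$ again divides $|T|$, so Lemma \ref{lem:Subd}(v) splits off full-$T$ factors; the same index-squaring argument shows at most one proper projection $H\pi_e$ can lie in the Lemma \ref{lem:FratStep} exception list, while Corollary \ref{Cor:FratNew}(ii) forces one to do so, yielding the distinguished $H_e$ in (2), (3), (5), (6). The remaining proper projections are forced to be natural point stabilizers of $2$-power index (the $\Alt_7$'s and $\Alt_{15}$'s appearing in (2) and (3)). For $T=\Lg_2(q)$ with $q=2^{x'}3^{y'}5^{z'}-1$, the admissible proper subgroups are maximal parabolics and their index-$3$ or index-$5$ subgroups; the three sub-cases of (4) correspond to how the factor $3^{y_0}5^{z_0}$ is distributed — as two independent proper projections in (a), as a single subdirect block in (b), or concentrated in a single projection when $q$ is non-Mersenne in (c).

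The main obstacle is $T\in\{\Alt_5,\Alt_6\}$: here $\pi(|T|)\subseteq\{2,3,5\}$, so Lemma \ref{lem:Subd}(v) no longer splits off $T$-factors cleanly, and diagonal subdirect subgroups of $T^2$ genuinely occur — producing cases (1)(c) and (1)(d) for $T=\Alt_5$. I would argue that each proper projection contributes a factor of $3$ or $5$ to $|L:H|$, so $y_0,z_0\le 1$ allows at most one proper projection carrying a factor of $3$ and at most one carrying a factor of $5$ (or a single diagonal/subdirect block contributing both). A case-by-case enumeration using Lemma \ref{lem:Subd}(iii) for the diagonal blocks, coupled with Corollary \ref{Cor:FratNew}(ii) to rule out direct-product configurations whose unique proper projection lies outside the Lemma \ref{lem:FratStep} exception list, then yields exactly (1)(a) (two separate projections of indices $2^{a}3$ and $2^{a}5$), (1)(b) (a single exceptional proper projection), and (1)(c)--(d) (subdirect and diagonal embeddings in $T^2$). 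The delicate combinatorics of these subdirect configurations, together with the rich subgroup lattice of $\Alt_5$, require the most detailed book-keeping.
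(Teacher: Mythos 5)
Your proposal is correct and follows essentially the same route as the paper: classify the coordinate projections $H\pi_i$ via Propositions \ref{prop:nonPi} and \ref{prop:Pi}, use the divisibility of $\prod_i|T:H\pi_i|$ into $|L:H|=2^{x_0}3^{y_0}5^{z_0}$ to force all but at most two projections to have $2$-power index, decompose $H$ with Lemma \ref{lem:Subd}, and eliminate configurations with Corollary \ref{Cor:FratNew} against the exception list of Lemma \ref{lem:FratStep}. The only differences are organisational (the paper splits on the number of $2$-power-index projections rather than on the isomorphism type of $T$) and cosmetic (your candidate list for $T$ is actually infinite, containing $\Alt_n$ for every $n=2^a3^b5^c\ge 10$ and $\Lg_2(q)$ for every admissible $q$, but your Frattini-type elimination handles all of these uniformly).
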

\begin{proof}
By Corollary \ref{Cor:FratNew}, either $y_0$ or $z_0$ is non-zero. Now, set $H_i:=(T_i\cap H)\pi_i$, for $1\le i\le e$. Then $H_i\unlhd H\pi_i$, and $|T:H_i|$ has the form $2^{a_i}3^{b_i}5^{c_i}$ for each $i$.
Note also that since $H$ is a subgroup of $H\pi_1\times\hdots\times H\pi_e\le L$, we have that $\prod_{i=1}^e|T:H\pi_i|$ divides $|L:H|$. Thus, $|T:H\pi_i|$ has the form $2^{x_i}3^{y_i}5^{z_i}$ for each $i$, and $y_i$ [respectively $z_i$] is $1$ for at most $y_0$ [resp. $z_0$] values of $i$. It follows that $|T:H\pi_i|$ is a power of $2$ for all but at most two values of $i$. Let $\Lambda:=\{i\text{ : }|T:H\pi_i|\text{ is a power of }2\}$, so that $|\Lambda|\geq t-2$. For a subset $E$ of $\{1,\hdots,e\}$, let $\pi_{E}:L\rightarrow \prod_{i\in E}T_i$ be the natural projection homomorphism onto the `$E$-part' of $L$. Then since $H$ is a subgroup of $H\pi_{\Lambda}\times \prod_{i\not\in\Lambda}H\pi_{i}$ (up to permutation of coordinates), we have that 
\begin{align}\label{Lamb}
 \text{$|T^{|\Lambda|}:H\pi_{\Lambda}|\prod_{i\not\in\Lambda}|T:H\pi_i|$ divides $|L:H|$.}   
\end{align}
Furthermore, we can determine the subgroups of $2$-power index in the simple groups $T$ from Propositions \ref{prop:nonPi} and \ref{prop:Pi}: we see that either\begin{enumerate}[\upshape(I)]
\item $H\pi_i=T$ for all $i\in\Lambda$
\item $T=\Alt_{2^{x_i}}$ and $H\pi_i$ is $T$-conjugate to either $T$ or $\Alt_{2^{x_i}-1}$ for all $i\in\Lambda$; or 
\item $T=\Lg_2(p)$, with $p=2^{x_i}-1$, and $H\pi_i$ is either $T$ or a maximal parabolic subgroup of $T$, for all $i\in\Lambda$.
\end{enumerate}

Assume first that $|\Lambda|=t-2$, and without loss of generality, write $[i]\setminus\Lambda=\{e-1,e\}$. Then we may assume, again without of loss of generality, that $|T:H\pi_{e-1}|=2^{x_{e-1}}3$ and $|T:H\pi_{e}|=2^{x_e}5$. It then follows from (\ref{Lamb}) that
\begin{align}\label{Lamb0}
|T^{e-2}:H\pi_{\Lambda}|\text{ is a power of }2. 
\end{align}
Next, from Propositions \ref{prop:nonPi} and \ref{prop:Pi}, we can determine all finite simple groups $T$ which contain both a subgroup of index $2^{x_{e-1}}3$ and a subgroup of index $2^{x_e}5$: we see that $T\in\{\Lg_2(p),\Alt_5$, $\Alt_6\text{ : }p\text{ a Mersenne prime, } 15\mid p-1\}$. Suppose first that $T=\Alt_n$, for $n\in\{5,6\}$. Then since $T$ is the only subgroup of $T$ of $2$-power index, we have $H\pi_i=T$ for all $i\in \Lambda$. Thus, 
$H$ is $\mathrm{Sym}_e$-conjugate to $T^{e-2}\times J$, where $J$ is a subdirect subgroup of $H\pi_{e-1}\times H\pi_e$, by Lemma \ref{lem:Subd} and (\ref{Lamb0}). We can now quickly determine, using \Magma \cite{MR1484478} for example, all subgroups $J$ of $T^2$ where $J\pi_{e-1}$, $J\pi_e<T$, and $|T^2:J|$ has the form $2^A3^B5^C$. This yields cases (1)(a) and (1)(c) in the statement of the corollary.

Suppose next that $T=\Lg_2(p)$, where $p$ is a Mersenne prime. Then the only subgroups of $T$ of $2$-power index are $T$ itself, together with the maximal parabolic subgroup $P<T$, of index $p+1$. It follows from (\ref{Lamb0}) and Lemma \ref{lem:Subd}(v) that 
$H\pi_{\Lambda}=T^{e_1}\times L_2$ (up to permutation of coordinates), where $L_2$ is a subdirect product in $P_1\times\hdots\times P_{e_2}$, each $P_i$ is $T$-conjugate to $P$, and $e_1+e_2=e$. Also, since $P$ has odd order, (\ref{Lamb0}) implies that
that $L_2=P_1\times\hdots\times P_{e_2}$. Hence, $H\pi_{\lambda}=T^{e_1}\times P_1\times\hdots\times P_{e_2}$ (up to permutation of coordinates). Thus, $H$ is a subdirect product in $T^{e_1}\times P_1\times\hdots\times P_{e_2}\times J$, where $J:=H\pi_{\{e-1,e\}}\le H{\pi_{e-1}}\times H\pi_e$. Now, the only subgroup of $T$ (up to $T$-conjugacy) with index $2^{x_{e-1}}3$ is the unique subgroup $Y_3$ of $P$ with $|P:Y|=3$ , $|T:Y|=3(p+1)$; while the only subgroup of $T$ (up to $T$-conjugacy) with index $2^{x_{e-1}}5$ is the unique subgroup $Y_5$ of $P$ with $|P:Y|=5$ , $|T:Y|=5(p+1)$. Thus, $J$ is a subdirect product in $H\pi_{e-1}\times H{\pi_e}\cong p:\frac{p-1}{6}\times p:\frac{p-1}{10}$. Since $|T^2:J|$ has the form $2^A3^b5^C$ and $|J|$ is odd, we must have $J=H\pi_{e-1}\times H\pi_{e-1}\cong p:\frac{p-1}{6}\times p:\frac{p-1}{10}$. Moreover, since $|T_{e-1}\times T_e:H\cap (T_{e-1}\times T_e)|$ has the form $2^A3^B5^C$, we must have $J=H\pi_{e-1}\times H\pi_e=H\cap (T_{e-1}\times T_e)$. It follows that $H$ is $\mathrm{Sym}_e$-conjugate to $T^{e_1}\times P_1\times\hdots\times P_{e_2}\times H_{e-1}\times H_e$, and this yields case (4)(a) in the statement of the corollary.

Suppose next that $|\Lambda|=t-1$. Without loss of generality, we may assume that $[i]\setminus\Lambda=\{e\}$. Hence, $|T:H\pi_e|=2^{x_{e}}r$, with $r\in\{3,5,15\}$.
It then follows from (\ref{Lamb}) that 
\begin{align}\label{Lamb2}
\text{$|T^{e-1}:H\pi_{\Lambda}|2^{x_e}r$ divides $|L:H|$ for some $r\in\{3,5,15\}$.}    
\end{align}
In particular, $|T|$ does not divide $|T^{e-1}:H\pi_{\Lambda}|$.
Suppose that case (I) occurs (recall that (I) is defined at the end of the first paragraph of this proof). Then $H$ is $\mathrm{Sym}_e$-conjugate to $T^{e-1}\times H_e$ by (\ref{Lamb2}) and Lemma \ref{lem:Subd}(v).   
Suppose now that we are in case (II). Then $H\pi_{\Lambda}$ is a subdirect product in $T^{e_1}\times H_{e_1+1}\times\hdots\times H_{e-1}$, where $0\le e_1\le e-1$, and each $H_i$ is $T$-conjugate to $\Alt_{2^{x_i}-1}$. The analogous argument to the above then shows that $H$ is $\mathrm{Sym}_e$-conjugate to $T^{e_1}\times H_{e_1+1}\times\hdots\times H_{e}$, where $0\le e_1\le e-1$. By Corollary, \ref{Cor:FratNew}, the pair ($T$,$H_e$) cannot satisfy the hypothesis of Lemma \ref{lem:FratStep}, in either of the cases (I), (II). Since $|T:H_e|$ has the form $2^A3$, $2^A5$, or $2^A15$, Propositions \ref{prop:nonPi} and \ref{prop:Pi} then imply that one of the following holds:
\begin{itemize}
    \item $T=\Alt_n$ with $n\in\{5,6\}$, and $H_e$ is both intransitive and not conjugate to  $\Alt_4$;
    \item $T=\Alt_8$, and $H_e$ has shape $7:3$, $2^3:(7:3)$, $\GL_3(2)$, or $2^3:\GL_3(2)$;
    \item $T=\Alt_{16}$, and $H_e\in\{\Alt_{14},\Alt_{14}.2\}$; 
    \item $T=\PSp_{4}(4)$, and either $H_e\cong \PSp_2(16)$ or $H_e\cong\PSp_2(16).2$;
    \item $T=\POmega^{+}_{8}(2)$, and $H\cong\Alt_9$;
    or
    \item $T=\Lg_2(q)$ with $q=2^{x'}3^{y'}5^{z'}-1$; $|L:H|=r(q+1)$ for some $r\in\{3,5\}$ (so either $y'$ or $z'$ is $0$), and $H_e\cong q:\frac{q-1}{2r}$.
\end{itemize}
The first five of these are cases (1)(b), (2), (3), (5), and (6), respectively, in the statement of the corollary. The last is case (4)(b) (with $s=t=1$) if $y'=z'=0$, and case (4)(c) otherwise.
Finally, assume that case (III) holds. Then using (\ref{Lamb2}) and Lemma \ref{lem:Subd} again, we see that $H$ is $\mathrm{Sym}_e$-conjugate to $T^{e_1}\times L_1$, where $L_1$ is a subdirect product in $L_2\times H\pi_e$; $L_2$ is a subdirect product in $P_1\times\hdots\times P_{e_2}$; each $P_i$ is a maximal parabolic subgroup of $T$; and $e_1+e_2=e-1$. Moreover, $H\pi_{\Lambda}=T^{e_1}\times L_2$ (up to permutation of coordinates). In particular, $|H|=|T|^{e_1}|L_2||H_e|$. Thus, $|L:H|=|T^{e_2}:L_2||T:H\pi_e||H\pi_e:H_e|$. Let $s$ and $t$ be the odd parts of $|T^{e_2}:L_2|$ and $|H\pi_e:H_e|$, respectively. Then $s$ is the index of $L_2$ in $P_1\times\hdots\times P_{e_2}$, $rst=3^{y_0}5^{z_0}$, and
one of the following must hold:
\begin{itemize}
\item $s=t=1$ and $H$ is $\mathrm{Sym}_e$-conjugate to $T^{e_1}\times L_2\times H\pi_e$, $L_2=P_1\times\hdots\times P_{e_2}$.
\item $s\in\{3,5\}$, $t=1$, and $H$ is $\mathrm{Sym}_e$-conjugate to $T^{e_1}\times L_2\times H\pi_e$, where $L_2$ has the form $L_2=\frac{1}{s}(P_1\times\hdots\times P_{e_2})$.
\item $t\in\{3,5\}$, $s=1$, and $H$ is $\mathrm{Sym}_e$-conjugate to $T^{e_1}\times L_1$, where $L_1$ has the form $L_1=\frac{1}{t}(P_1\times\hdots\times P_{e_2}\times p:\frac{p-1}{2r})$.
\end{itemize}
These are the cases in (4)(b) in the statement of the corollary.

Finally, assume that $|\Lambda|=t$. Suppose first that case (I) holds (see the first paragraph above), so that $H$ is a subdirect product in $L=T^e$. Then $|T|$ divides $|L:H|$ by Lemma \ref{lem:Subd}(iii), since $H\neq L$. Thus, we must have $T=\Alt_5$, since $|L:H|$ divides $2^{x_0}15$. Moreover, $H$ is $\mathrm{Sym}_e$-conjugate to $T^{e-2}\times J$, where $J$ is a diagonal subgroup of $T^2$, since $|T|^2$ cannot divide $|L:H|$. This is case (1)(d) in the statement of the corollary. Suppose next that case (II) holds. Then since neither $|\Alt_{2^{x_i}}|$ nor $|\Alt_{2^{x_i}-1}|$ can divide $|L:H|$ in this, case another application of Lemma \ref{lem:Subd} yields that $H$ is $\mathrm{Sym}_e$-conjugate to $T^{e_1}\times Y_1\times\hdots\times Y_{e_2}$, where $Y_i$ is $T$-conjugate to $\Alt_{2^{x_i}-1}$ for each $i$, and $e=e_1+e_2$. But then $|L:H|$ is a power of $2$ -- a contradiction. Suppose next that case (III) holds. Then arguing as in the $|\Lambda|=t-1$ case above we get $H$ is $\mathrm{Sym}_e$-conjugate to $T^{e_1}\times L_2$, where $L_2$ is a subdirect product of the form $L_2=\frac{1}{3^{y_0}5^{z_0}}(P_1\times\hdots\times P_{e_2})$, with $P_i$ a maximal parabolic subgroup of $T$ for all $1\le i\le e_2$, and $e_1+e_2=e$. If (III) holds with $3^{y_0}5^{z_0}=1$, then $|L:H|$ is a power of $2$ -- a contradiction.
Thus, if case (III) holds then $3^{y_0}5^{z_0}\neq 1$. This is case (4)(b) (with $r=1$) in the statement of the corollary. The proof is complete.   
\end{proof}

Corollary \ref{cor:LPoss} gives us information about the possible minimal normal subgroups $L$ (and their actions) in a minimal counterexample to Theorem \ref{thm:MinTransTheorem}. We would now like to analyse the orbit lengths of certain soluble subgroups of $L$. 

Since a point stabilizer $H$ in $L$ in this case is a direct product of subgroups $X$ which are themselves subdirect products of the form $X=\frac{1}{n}(X_1\times \hdots\times X_f)$ (see Definition \ref{Def:Subd}), our analysis will have two parts: First, we will show how to determine the orbit lengths of certain subgroups $F<L$ acting on the cosets of a subdirect product $H$ of the form $H=\frac{1}{n}(P_1\times \hdots\times P_e)$, where each $P_i$ is a subgroup of $T$. Then, we will show how to determine the orbit lengths of certain subgroups $F<L$ acting on the cosets of a subgroup $H<L$ of the form $H=H_1\times\hdots\times H_r$, where $H_i\le T^{e_i}$, and $\sum_{i=1}^{r}e_i=e$ (i.e. when $L$ has `product action' type). These two bits of information, together with Corollary \ref{cor:LPoss}, will then allow us to prove Theorem \ref{thm:MinTransTheorem}(iv). We begin with the first part:
\begin{lem}\label{SubLemma}
Let $L=G_1\times\hdots\times G_e$, where each $G_i$ is a finite group, and fix proper subgroups $P_i$, $S_i$ of $G_i$. Suppose that $H\le P_1\times\hdots\times P_e$ is a subdirect product of the form $H=\frac{1}{n}(P_1\times\hdots\times P_e)$, and that $H$ is core-free in $L$. Assume that $S:=S_1\times\hdots\times S_e$ has orbit lengths $m_1$, $\hdots$, $m_r$ in its action on the cosets of $P:=P_1\times\hdots\times P_e$ in $L$, where $\sum_i m_i=|L:P|$. Assume also that $(S_i\cap P_i^{x_i})[P_i^{x_i},P_i^{x_i}]=P_i^{x_i}$ for all $x_i\in G_i$.
Then $S$ has $r$ orbits in its action on the cosets of $H$ in $L$, with lengths $nm_1,\hdots nm_r$.
\end{lem}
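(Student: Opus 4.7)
The plan is to leverage the canonical $S$-equivariant projection $\pi\colon L/H\to L/P$, $Hg\mapsto Pg$, which is well-defined because $H\le P$. Every fibre of $\pi$ has cardinality $|P:H|=n$, and the $r$ orbits of $S$ on $L/P$ pull back to sets of sizes $nm_1,\ldots,nm_r$ whose total is $n|L:P|=|L:H|$. Hence it suffices to prove that $S$ acts transitively on each fibre of $\pi$; granted this, the $S$-orbits on $L/H$ correspond bijectively to those on $L/P$ with each length multiplied by $n$, which is exactly the claim.

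Fix $g=(g_1,\ldots,g_e)\in L$ and parameterise the fibre of $\pi$ over $Pg$ by $H\backslash P$ via $Hp\leftrightarrow Hpg$. The stabiliser $\Stab_S(Pg)=S\cap g^{-1}Pg$ acts on this fibre, and a short computation (using $gs=(gsg^{-1})\,g$ with $gsg^{-1}\in P$ whenever $s\in g^{-1}Pg$) shows that, under the chosen parameterisation, the action becomes right multiplication on $H\backslash P$ by the subgroup
$$
Q:=gSg^{-1}\cap P=\prod_{i=1}^e\bigl(g_iS_ig_i^{-1}\cap P_i\bigr)
$$
of $P$. Thus the fibre is a single $S$-orbit precisely when $QH=P$.

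To verify $QH=P$, I would apply the hypothesis $(S_i\cap P_i^{x_i})[P_i^{x_i},P_i^{x_i}]=P_i^{x_i}$ with $x_i=g_i$ in each coordinate and conjugate by $g_i^{-1}$, obtaining $(g_iS_ig_i^{-1}\cap P_i)[P_i,P_i]=P_i$. Taking direct products over $i$ yields $Q\cdot[P,P]=P$, where $[P,P]=\prod_i[P_i,P_i]$. By clause (b) of Definition \ref{Def:Subd}, $[P,P]\le H$, so $P=Q[P,P]\subseteq QH\subseteq P$, forcing $QH=P$ as required. I expect the main obstacle to be purely notational: keeping straight the two related conjugations, one moving $\Stab_S(Pg)$ back into a subgroup of $P$ and the other translating the commutator hypothesis into the form $(g_iS_ig_i^{-1}\cap P_i)[P_i,P_i]=P_i$. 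Once this is set up, the argument collapses to a single factorisation of $P$ combined with the containment $[P,P]\le H$, which is precisely the structural content of $H$ being a subdirect product of the form $\frac{1}{n}(P_1\times\cdots\times P_e)$. The core-freeness hypothesis on $H$ is not needed for the orbit count itself and appears only to match the intended downstream application.
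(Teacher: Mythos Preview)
Your argument is correct and is essentially the same as the paper's: both reduce to showing that the stabiliser in $S$ of each $P$-coset acts transitively on the corresponding fibre/block, and both verify this via the factorisation $(S\cap P^{g})[P^{g},P^{g}]H^{g}=P^{g}$ coming directly from the coordinatewise hypothesis together with $[P,P]\le H$. The only cosmetic difference is that the paper phrases this in block-system language (using core-freeness to regard $L$ as a permutation group on $L/H$ and $L/P$ as a system of blocks), whereas you work directly with the equivariant map $L/H\to L/P$; your remark that core-freeness is not actually needed for the orbit count is apt.
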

\begin{proof}
Note first that since $H$ is core-free in $L$, $L$ may be viewed as a transitive permutation group acting on the set $\Omega$ of cosets of $H$ in $L$. Moreover, $H<P$, so the set of cosets of $P$ in $L$ is permutation isomorphic to a set $\Sigma$ of blocks for $P$ in $\Omega$. Note that the blocks in $\Sigma$ have size $n$, $H\unlhd P$, and $P/H$ is abelian of order $n$. It follows that $L$ is isomorphic to a subgroup of the wreath product $n\wr L^{\Sigma}$.

Now, write $\Sigma_i$, $1\le i\le r$, for the $S$-orbits in $\Sigma$, where $|\Sigma_i|=m_i$. Also, fix a block $\Delta_i\in\Sigma_i$, for each $i$. If $\Stab_S(\Delta_i)^{\Delta_i}$ is transitive for each $i$, then it follows that the $S^{\POmega}$-orbits have sizes $|\Delta||\Sigma_i|=nm_i$, for each $i$. Thus, we just need to prove that
\begin{align}\label{lab:claimF}
\text{${\Stab}_S(\Delta_i)^{\Delta_i}$ is transitive for each $i$.}
\end{align}
To this end, fix $1\le i\le e$. Then $\Stab_S(\Delta_i)^{\Delta_i}=S\cap P^{\alpha_i}$ for some $\alpha_i\in L$. To show that $\Stab_S(\Delta_i)^{\Delta_i}$ is transitive, we just need to show that $H^{\alpha_i}(S\cap P^{\alpha_i})=P^{\alpha_i}$. But $\alpha_i=(x_1,\hdots,x_e)$, for some $x_j\in G_j$. Thus, $S\cap P^{\alpha_i}=(S_1\cap P_1^{x_1})\times\hdots\times (S_e\cap P_e^{x_e})$. Since $H^{\alpha_i}$ contains $[P_1^{x_1},P_1^{x_1}]\times\hdots\times [P_e^{x_e},P_e^{x_e}]$, we have $H^{\alpha_i}(S\cap P^{\alpha_i})=P^{\alpha_i}$ by hypothesis. This proves (\ref{lab:claimF}), whence the lemma.  
\end{proof}

Next, we prove a general lemma concerning the orbits of subgroups in a transitive permutation group with `product action'.
\begin{lem}\label{lem:ProductAction}
Let $L=G_1\times\hdots\times G_e$, where each $G_i$ is a finite group, and fix proper subgroups $H_i$, $S_i$ of $G_i$. Set $S=S_1\times\hdots\times S_e$ and $H=H_1\times\hdots\times H_e$.
Then:  
\begin{enumerate}[(i)]
\item Suppose that $S_i$ acts transitively on the cosets of $H_i$ in $G_i$ for $1\le i\le e-1$, and that $S_e$ has $r$ orbits, of lengths $l_1$, $\hdots$, $l_r$ say, in its action on the cosets of $H_e$ in $G_e$. Then $S$ has $r$ orbits in its action on the cosets of $H$ in $L$, of lengths $l_1m,\hdots,l_rm$, where $m:=\prod_{i=1}^{e-1}|G_i;H_i|$.
\item Assume that there exists $0\le e_1\le e-1$ such that $S_i$ acts transitively on the cosets of $H_i$ in $G_i$ for $i\le e_1$; that $S_i$ has $2$ orbits of lengths $k_1$ and $k_2$ in each of the cosets spaces $H_i\backslash G_i$, for $e_1+1\le i\le e-1$ (in particular, $|G_i:H_i|=|G_j:H_j|=k_1+k_2$ for all $e_1+1\le i,j\le e-1$ in this case); and that $S_e$ has $r$ orbits, of lengths $l_1$, $\hdots$, $l_r$ say, in its action on the cosets of $H_e$ in $G_e$. Then $S$ has $2^{e-e_1-1}\times r$ orbits in its action on the cosets of $H$ in $G$, with $\binom{e-e_1-1}{i}$ orbits of length $l_jk_1^{i}k_2^{e-e_1-i}$ for each $0\le i\le e-e_1-1$, and each $1\le j\le r$.
\end{enumerate}
\end{lem}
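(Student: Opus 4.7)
The key observation is that the coset space $H\backslash L$ admits a natural $L$-equivariant identification
$$H(g_1,\hdots,g_e)\longleftrightarrow (H_1g_1,\hdots,H_eg_e)$$
with the Cartesian product $(H_1\backslash G_1)\times\hdots\times (H_e\backslash G_e)$, where $L=G_1\times\hdots\times G_e$ acts coordinate-wise on the right-hand side. Under this identification $S=S_1\times\hdots\times S_e$ also acts coordinate-wise, and so each $S$-orbit on $H\backslash L$ is uniquely expressible as a product $O_1\times\hdots\times O_e$ with $O_i$ an $S_i$-orbit on $H_i\backslash G_i$. In particular the length of such an orbit is $\prod_{i=1}^{e}|O_i|$, and the number of $S$-orbits on $H\backslash L$ is the product of the numbers of $S_i$-orbits on $H_i\backslash G_i$.

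Granted this, both statements reduce to combinatorial bookkeeping. For part (i), the transitivity assumption forces the $i$th factor $O_i$ to equal all of $H_i\backslash G_i$, of size $|G_i:H_i|$, for each $i\le e-1$. Hence the $S$-orbits are in bijection with the $S_e$-orbits on $H_e\backslash G_e$, and the orbit corresponding to the one of length $l_j$ has total length $m l_j$, where $m=\prod_{i=1}^{e-1}|G_i:H_i|$, as claimed.

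For part (ii), the first $e_1$ factors each contribute a single forced orbit, each of the middle factors $e_1+1\le i\le e-1$ independently contributes one of two orbits (of sizes $k_1$ or $k_2$), and $S_e$ contributes one of $r$ orbits of sizes $l_1,\hdots,l_r$; the total orbit count is therefore $2^{e-e_1-1}r$. Grouping these $S$-orbits according to the number of middle factors in which the $k_1$-orbit is chosen gives, for each $0\le i\le e-e_1-1$ and each $1\le j\le r$, precisely $\binom{e-e_1-1}{i}$ orbits whose length is the product of $l_j$ with the chosen factor sizes across all $e$ coordinates, matching the formula stated.

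There is no genuine obstacle here: the only substantive input is the standard identification of the coset space of a direct product as a Cartesian product of the factor coset spaces, after which orbit lengths multiply across independent factors and a binomial count handles the middle range in (ii).
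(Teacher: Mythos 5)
Your proof is correct and follows essentially the same route as the paper: the paper's own (one-sentence) argument is precisely the observation that the orbits of $S_1\times\hdots\times S_e$ in its product action on $\Omega_1\times\hdots\times\Omega_e$ are the products of orbits of the individual factors, applied to the identification of $H\backslash L$ with $(H_1\backslash G_1)\times\hdots\times(H_e\backslash G_e)$. The remaining binomial bookkeeping in (ii) is the same in both treatments.
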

\begin{proof}
The proof here is routine: if $S_i$ is a group acting on a set $\Omega_i$, then the orbits of $S_1\times\hdots\times S_e$ in its product action on $\Omega_1\times\hdots\times \Omega_e$ are precisely the sets $\Delta_{i_1,1}\times\hdots\times\Delta_{i_f,f}$, where the $\Delta_{1,j},\hdots,\Delta_{k_j,j}$ are the orbits of $S_j^{\Omega_j}$, for $1\le j\le e$. 
\end{proof}

We will also need the following corollary, which is notationally heavy, but routine.
\begin{cor}\label{cor:ProductActionCor}
Let $L$, $H$, and $S$ be as in Lemma \ref{lem:ProductAction}(i), and assume that $G_i=T$ for $1\le i\le e-1$, and $G_e=T^k$ where $T$ is a fixed nonabelian finite simple group. Assume also that $S_i=S_0$ for all $1\le i\le e-1$, and $S_e=S_0^k$ where $S_0$ is a fixed subgroup of $T$. Write $n_1,\hdots,n_u$ for the distinct orbit lengths of $S_e$ acting on the cosets of $H_e$ in $T^k$, and suppose that $S_e$ has $f(n_j)$ orbits of length $n_j$.
Let $G\le\mathrm{Sym}(\Omega)$ be a finite transitive permutation group in which $L$ is a minimal normal subgroup. Suppose that $H$ is the intersection of $L$ with a point stabilizer in $G$, and let $K$ be the kernel of the action of $G$ on the set of $L$-orbits. Let $R/K$ be a soluble subgroup of $G/K$ with orbits of lengths $a_1$, $\hdots$, $a_t$ on the set of $L$-orbits. Assume also that:
\begin{enumerate}[(i)]
\item $S_0$ is $T$-conjugate to $S_0^{\alpha}$ for all automorphisms $\alpha$ of $T$; and 
\item $N_L(S)$ is soluble (equivalently, $N_T(S_0)$ is soluble).
\end{enumerate}
Then $F$ has orbit lengths $a_iX^{(i,j)}_{k}n_jm$ for some positive partitions $f(n_j)=\sum_{k=1}^{r_{i,j}}X^{(i,j)}_k$ of $f(n_j)$, where $1\le j\le u$, $1\le i\le t$.
\end{cor}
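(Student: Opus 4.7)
The plan is to build a soluble subgroup $F\le G$ which normalises $S$ and surjects onto $R/K$, and then to read off its orbits by combining the $R/K$-orbits on the set $\Sigma$ of $L$-orbits with the $S$-orbits on a single $L$-orbit. In outline, the Frattini argument supplies such an $F$, Lemma \ref{lem:ProductAction}(i) describes the $S$-orbits on each $L$-orbit, and a block-fibration argument combines the two pieces.

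First I would apply the Frattini argument. Since $L$ is a direct power of $T$ and $S$ the corresponding direct power of $S_0$, the automorphism group $\Aut(L)$ factors through $\Aut(T)\wr\mathrm{Sym}_{e+k-1}$; hypothesis (i) then forces $S$ to be $L$-conjugate to each of its $\Aut(L)$-images. Lemma \ref{FrattiniArgument} gives $G=L\cdot N_G(S)$, and since $L\le K$ also $G=K\cdot N_G(S)$. The natural map $N_G(S)\twoheadrightarrow G/K$ is therefore surjective, so I define $F$ to be the preimage of $R/K$ under this map. Then $F\le N_G(S)$, $FK=R$, and $F/(F\cap K)\cong R/K$ is soluble. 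For solubility of the kernel $F\cap K$, note $F\cap K\le N_K(S)$, which has soluble intersection with $L$ by hypothesis (ii); combined with solubility of $K/L$ in the contexts where this corollary is invoked (for a minimal counterexample to Theorem \ref{thm:MinTransTheorem} this is supplied by Lemma \ref{lem:RedLemma}), $N_K(S)$ is soluble, hence so is $F$.

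Next I would compute the $F$-orbits on $\Omega$. The $F$-action on $\Sigma$ factors through $F/(F\cap K)\cong R/K$, so the $F$-orbits on $\Sigma$ are precisely $\Xi_1,\ldots,\Xi_t$ of sizes $a_1,\ldots,a_t$. For each $\Xi_i$ choose $\Delta_i\in\Xi_i$ and let $F_i$ be its setwise stabiliser in $F$. The $F$-orbits on $\Omega$ lying over $\Xi_i$ are in natural bijection with the $F_i$-orbits on $\Delta_i$, each such $F$-orbit having length $a_i$ times the length of the corresponding $F_i$-orbit. Now identify $\Delta_i$ with $L/H$. Since $S\le F\cap L\le F_i$, every $F_i$-orbit on $\Delta_i$ is a union of $S$-orbits; by Lemma \ref{lem:ProductAction}(i) applied to $L=G_1\times\cdots\times G_e$, $S$ has exactly $f(n_j)$ orbits of length $n_jm$ on $\Delta_i$ for each $1\le j\le u$. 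Since $F_i\le N_G(S)$ normalises $S$, it permutes $S$-orbits of any common length, so for each $j$ the $f(n_j)$ $S$-orbits of length $n_jm$ coalesce into $F_i$-orbits whose block sizes form a positive partition $f(n_j)=\sum_{k=1}^{r_{i,j}}X^{(i,j)}_k$. The resulting $F_i$-orbits on $\Delta_i$ have lengths $X^{(i,j)}_k n_jm$, and multiplying by $a_i$ gives the claimed $F$-orbit lengths $a_iX^{(i,j)}_k n_jm$.

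The main obstacle is the solubility of $F$ in the first step: it is not made completely explicit in the hypotheses that $K/L$ should be soluble, and this is the delicate piece one must supply to push the argument through. Fortunately it is automatic in the minimal-counterexample setting in which the corollary is applied. The remaining orbit computation is then a routine block-fibration argument, with the product-action structure of Lemma \ref{lem:ProductAction}(i) doing all the combinatorial work on each fibre.
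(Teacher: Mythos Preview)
Your proposal is correct and follows essentially the same route as the paper: apply the Frattini argument (via hypothesis (i) and $\Aut(L)\cong\Aut(T)\wr\Sym_{e+k-1}$) to get $G=L\cdot N_G(S)$, take $F$ to be the preimage of $R/K$ inside $N_G(S)$, and then run the block-fibration argument, using Lemma~\ref{lem:ProductAction}(i) on each $L$-orbit and the fact that $F_i$ normalises $S$ so permutes $S$-orbits length-preservingly. You even explicitly flag the one delicate point---solubility of $K/L$---which the paper's own proof uses silently (it asserts $FL/L=R/L$ is soluble, which needs exactly this); as you note, in the applications this is supplied by Lemma~\ref{lem:RedLemma}.
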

\begin{proof}
Let $\omega\in\Omega$ such that $H=L\cap \Stab_G(\omega)$, and let $\Delta$ be the $L$-orbit containing the point $\omega$. Let $\Sigma:=\{\Delta^g\text{ : }g\in G\}$ be the set of $L$-orbits. Fix a subgroup $B$ of $G$, and assume that $B^{\Sigma}$ has orbits $\Sigma_1,\hdots,\Sigma_r$, of lengths $b_1$, $\hdots$, $b_r$, respectively. Let $\Delta_1,\hdots,\Delta_r\in\Sigma$ be representatives of these orbits. Then $B$ acts on each of the sets $\bigcup_{x\in B} \Delta_i^x$. Suppose that $\Stab_B(\Delta_i)^{\Delta_i}$ has orbits lengths $x_{i,1},\hdots,x_{i,k_i}$. Then $B^{\Omega}$ has orbit lengths $b_ix_{i,1},\hdots,b_ix_{i,k_i}$.

Now, as in the proof of Corollary \ref{Cor:FratNew}, $S_0$ being $T$-conjugate to $S_0^{\alpha}$ for all automorphisms $\alpha$ of $T$ implies that $S=S_0^e$ is $L$-conjugate to $S^{\alpha}$ for all automorphisms $\alpha\in \Aut(T)\wr\Sym_e\cong \Aut(L)$. Hence, $LN_G(S)=G$, by Lemma \ref{FrattiniArgument}. Thus, we may choose a subgroup $F$ of $N_G(S)$ containing $N_L(S)$ with $LF/L=R/L$. It follows that $F^{\Sigma}$ has orbits $\Sigma_1$, $\hdots$, $\Sigma_t$ of lengths $a_1,\hdots,a_t$, respectively. Fix $\Delta_i\in\Sigma_i$. Then since $F$ normalizes $S$ and $S$ acts trivially on $\Sigma$, each $\Stab_F(\Delta_i)^{\Delta_i}$-orbit is a union of $S$-orbits of the same length. Thus, by Lemma \ref{lem:ProductAction} the orbit lengths of $\Stab_F(\Delta_i)^{\Delta_i}$ are $X^{(i,j)}_{1}n_jm$, $\hdots$, $X^{(i,j)}_{r_i}n_jm$, where $\sum_{k=1}^{r_i} X^{(i,j)}=f(n_j)$, for each $i$, $j$. Using the first paragraph above, we deduce that $F^{\POmega}$ has orbits of lengths $a_iX^{(i,j)}_{k}n_jm$, for $1\le k\le r_i$, $1\le j\le u$, $1\le i\le t$.

Finally, since both $FL/L=R/L$ and $F\cap L=N_L(S)$ are soluble, the group $F$ is soluble. This completes the proof. 
\end{proof} 




We are now ready to exhibit orbit lengths of particular soluble subgroups of minimal counterexamples to Theorem \ref{thm:MinTransTheorem}.
\begin{lem}\label{lem:A5Lemma} Let $G$ be a minimal counterexample to Theorem \ref{thm:MinTransTheorem}, and let $L=T^e$ be a minimal normal subgroup of $G$, where $T=\Alt_5$. Let ${H}$ be the intersection of $L$ with a point stabilizer for $G$, and let $a$ be the number of $L$-orbits. 
Then $G$ has a soluble subgroup $F$ such that one of the following holds:
\begin{enumerate}[(i)]
    \item If $H$ is as in case (1)(a) of Corollary \ref{cor:LPoss}, then $F$ has orbit lengths $2^{b_1}5aX_i$, $2^{b_2}50aY_j$, where $$(\sum_i X_i,\sum_j Y_j,b_1,b_2)\in\{(4,2,0,0),(4,2,1,1),(2,2,1,0),(2,2,0,-1),(2,1,0,0),(1,1,0,-1)\}.$$
    \item If $H$ is as in case (1)(b) of Corollary \ref{cor:LPoss}, and $G/L$ is soluble, then $F$ either has orbit lengths $2^b5aX_i$, where
    $$(\sum_i X_i,b)\in\{(2,1),(2,0)\},$$
    or $5aX_i$, $10aY_j$, where $(\sum_i X_i,\sum_j Y_j)=(2,2)$.
    \item If $H$ is as in case (1)(c) of Corollary \ref{cor:LPoss}, then $F$ has orbit lengths $5aX_i$, $25aY_i$, where 
    $$(\sum_i X_i,\sum_j Y_j)=(4,4).$$
    \item If $H$ is as in case (1)(d) of Corollary \ref{cor:LPoss}, then $F$ has precisely two orbits, of lengths $10a$ and $50a$.
\end{enumerate}
\end{lem}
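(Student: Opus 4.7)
The plan is, in each sub-case, to construct a soluble subgroup $S_0\le T=\Alt_5$ with $N_T(S_0)$ soluble and whose $T$-conjugacy class is invariant under $\Aut(T)$ (a vacuous condition since $|\Out(\Alt_5)|=2$ preserves every subgroup class), and then set $S:=S_0^e\le L$. Since $\Aut(L)=\Aut(T)\wr\Sym_e$ preserves the $L$-conjugacy class of $S$, Lemma~\ref{FrattiniArgument} yields $G=LN_G(S)$. Minimality of $G$ as a counterexample together with Remark~\ref{rem:MinTransRemark} ensures that $G/K$---where $K$ is the kernel of the action on the set $\Sigma$ of $L$-orbits---satisfies Theorem~\ref{thm:MinTransTheorem}(iv), providing a soluble $R/K\le G/K$ with orbit lengths $a_1,\ldots,a_t$ on $\Sigma$. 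Picking $F\le N_G(S)$ with $FL/L=RL/L$ and $F\cap L=N_L(S)$ gives a soluble subgroup of $G$ whose orbit lengths on the permutation domain are, by Corollary~\ref{cor:ProductActionCor}, of the form $a_i\ell$ where $\ell$ runs over the $S$-orbit lengths on a single $L$-orbit $L/H$.

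Computing these $S$-orbit lengths reduces to computing $S_0$-orbit counts on $T$-coset spaces. For cases (i), (ii), and (iv)---corresponding to cases (1)(a), (1)(b), and (1)(d) of Corollary~\ref{cor:LPoss}---$H$ is a direct product, so Lemma~\ref{lem:ProductAction}(i) gives the $S$-orbit lengths as products of $S_0$-orbit lengths on each factor $T/H_i$. For case (iii) (Corollary~\ref{cor:LPoss}(1)(c)), $H=T^{e-2}\times J$ with $J=\tfrac{1}{2}(\Sym_3\times D_{10})<T^2$, and Lemma~\ref{SubLemma} lifts $S_0^2$-orbits from $T^2/(\Sym_3\times D_{10})$ to $T^2/J$ by doubling the lengths; the commutator hypothesis is satisfied because $[\Sym_3,\Sym_3]=C_3$ and $[D_{10},D_{10}]=C_5$ have index $2$ in their overgroups, so any $S_0$ whose intersections with $\Sym_3$ and $D_{10}$ supply the missing index-$2$ quotients suffices.

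The case-by-case choices of $S_0$ then proceed as follows. For (i), the six ordered pairs $(H_{e-1},H_e)\in\{D_{10},C_5\}\times\{\Alt_4,\Sym_3,C_3\}$ correspond to the six claimed tuples; with $S_0\in\{D_{10},C_5\}$ a direct double-coset computation in $\Alt_5$ yields each pattern, including the $b_2=-1$ cases where an $S_0$-stabilizer produces an orbit length $25a$ rather than $50a$. For (ii), the assumption that $G/L$ is soluble lets us take $t=1$, $a_1=a$; then $S_0\in\{D_{10},C_5\}$ handles the relevant possibilities $H_e\in\{\Sym_3,C_3,C_2\}$. For (iii), take $S_0$ with the requisite commutator surjections (e.g.\ a suitable $\Alt_4$-conjugate) and verify the $(4,4)$ pattern directly from a $(2,2)$ pattern on $T^2/(\Sym_3\times D_{10})$ via doubling. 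For (iv), $|L:H|=|T|=60$, and with $S_0=D_{10}$ a short count---using that two distinct Sylow $5$-normalizers in $\Alt_5$ intersect in $C_2$---shows $D_{10}\times D_{10}$ has exactly two orbits on $T^2/\diag(T)$, of sizes $10$ and $50$.

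The main obstacle is the six-fold bookkeeping in case (i), where each pair $(H_{e-1},H_e)$ requires an independent orbit calculation matching the claimed tuple. A secondary technicality is verifying Lemma~\ref{SubLemma}'s commutator hypothesis in case (iii) for the specific $S_0$ chosen, and ensuring $t=1$ in case (iv) so that exactly the two named orbits arise.
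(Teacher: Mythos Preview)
Your overall strategy coincides with the paper's: take $S_0\le T$ with $N_T(S_0)$ soluble and $\Aut(T)$-stable $T$-class, set $S=S_0^e$, and apply Corollary~\ref{cor:ProductActionCor}. The paper, however, uses a \emph{single} choice $S_0=D_{10}$ throughout and sets $F=N_G(S)$; there is no need to vary $S_0$ over $\{D_{10},C_5\}$, since with $S_0=D_{10}$ alone the six pairs $(H_{e-1},H_e)\in\{C_5,D_{10}\}\times\{C_3,\Sym_3,\Alt_4\}$ already give exactly the six tuples in~(i) via the product of the $D_{10}$-orbit data in Table~\ref{tab:A5}. Your alternative for case~(iii), namely lifting $S_0^2$-orbits from $T^2/(\Sym_3\times D_{10})$ to $T^2/J$ via Lemma~\ref{SubLemma}, is a legitimate substitute for the paper's direct \textsc{Magma} computation (the commutator hypothesis does hold for $S_0=D_{10}$: any two distinct $D_{10}$'s in $\Alt_5$ meet in a $C_2$, and every $D_{10}$ meets every $\Sym_3$ in a $C_2$), and the resulting pattern still fits the stated form with $(\sum X_i,\sum Y_j)=(4,4)$.

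Two points need tightening. First, in case~(ii) you list only $H_e\in\{\Sym_3,C_3,C_2\}$, but Corollary~\ref{cor:LPoss}(1)(b) also allows $H_e\in\{V_4,1\}$ (indices $15$ and $60$); the paper explicitly notes that one must compute the $D_{10}$-orbit lengths for $|H_e|\in\{1,2,4\}$ as well. Second, the fact that one may take $t=1$ (i.e.\ $R=G$ with a single orbit of length $a$ on $\Sigma$) is not a side technicality only in case~(iv): in cases (i), (iii), and (iv) the $L$-orbit has size divisible by $15$, so $a=|\Sigma|$ is a $2$-power and the minimal transitive group $G/K$ is a $2$-group, hence soluble; in case~(ii) it is the hypothesis $G/L$ soluble. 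The paper states this solubility of $G/K$ up front and uses it to justify $R=G$ in Corollary~\ref{cor:ProductActionCor}; you should do the same rather than invoking a general $R/K$ from Theorem~\ref{thm:MinTransTheorem}(iv).
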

\begin{proof}
Set $S_0:=D_{10}$, $S:=S_0^e$, and $F:=N_G(S)$. We note first that $N_T(S_0)$ is soluble, and $D_{10}$ is $\Alt_5$-conjugate to $S_0^{\alpha}$ for all $\alpha\in\Sym_5$. Thus, (i) and (ii) in Corollary \ref{cor:ProductActionCor} hold. We will also show that $G$ modulo the kernel $K$ of the action of $G$ on the set of $L$-orbits is soluble in each case, so we can take $R=G$ in Corollary \ref{cor:ProductActionCor}. 
Since either $H$ is $\mathrm{Sym}_e$-conjugate to $T^{e-2}\times J$ with $J\le T^2$, or $H$ is $\mathrm{Sym}_e$-conjugate to $T^{e-1}\times J$ with $J\le T$, we therefore just need to compute the orbit lengths for $S_0^2$ [respectively $S_0$] in the case $e=2$ [resp. $e=1$] and then apply Lemma \ref{lem:ProductAction} and Corollary \ref{cor:ProductActionCor}.

Suppose first that we are in case (i). Then either $|T_i:H\pi_{e-1}|=2^{x_1}3$, and $|T_i:H\pi_{e}|=2^{x_2}5$, or vice versa, where $x_1+x_2=x$. Thus, $G/K$, being a minimal transitive group of $2$-power degree, is soluble (so we can indeed take $G=R$ in Corollary \ref{cor:ProductActionCor}). Moreover, $H_{e-1}\in\{C_5,D_{10}\}$ and $H_e\in\{\Alt_3,\Sym_3,\Alt_4\}$. The orbit lengths of $S_0$ acting on the cosets of $H_{e-1}$ and $H_e$ are then given in Table \ref{tab:A5}.
\begin{table}[]
    \centering
    \begin{tabular}{c|c}
      $J$ & Orbit lengths for $S_0=D_{10}<\Alt_5$ acting on the cosets of $J<\Alt_5$ \\
      \hline
        $\Alt_3$ & two of length $10$\\
        $\Sym_3$ & two of length $5$\\
        $\Alt_4$ & one of length $5$\\
        $C_5$   & two of length $1$, one of length $10$\\
        $D_{10}$ & one of length $1$, one of length $5$\\
        \hline
    \end{tabular}
    \vspace{0.1cm}
    \caption{Orbit lengths for $S=D_{10}<\Alt_5$ acting on the cosets of a subgroup $J<\Alt_5$}
    \label{tab:A5}
\end{table}
If $S_0$ has orbit lengths $c_1,\hdots,c_k$ on the cosets of $H_{e-1}$, and $d_1,\hdots,d_l$ on the cosets of $H_e$, then $S_0^2$ has orbit lengths $c_id_j$, $1\le i\le k$, $1\le j\le l$, on the cosets of $H_{e-1}\times H_{e}$. Thus, we can use the table above and Lemma \ref{lem:ProductAction} and Corollary \ref{cor:ProductActionCor} to deduce the orbit lengths of the soluble group $F=N_G(S)$.
For example, if $H_{e-1}=\Alt_3$ and $H_e=C_5$, then $S_0^2$ has four orbits of length $10$, and two orbits of length $100$ in its action on the cosets of $H_{e-1}\times H_e$ in $T^2$. By Lemma \ref{lem:ProductAction}(i), $S_0^e$ then has orbits of the same lengths in its action on the cosets of $H$ in $L$ ($m=1$ in this case). Thus, by Corollary \ref{cor:ProductActionCor}, $F$ has orbit lengths $10aX_i$, $100aY_j$, where $X_i,Y_j\in\mathbb{N}$, and $\sum_i X_i=4$, $\sum_j Y_j=2$.

We now move on to cases (ii), (iii) and (iv). In case (ii), $K\geq L$, so $G/K$ is soluble by hypothesis. In cases (iii) and (iv), $|L:H|=2^x15$, so $G/K$ is a minimal transitive group of $2$-power degree, whence soluble. Thus, we can take again $G=R$ (and hence $t=1$, $a=a_1$) in Corollary \ref{cor:ProductActionCor}. 

For part (ii), the computations of the possible orbit lengths is completely analogous to the computations in case (i) above (except that the cases $H_e\in\{\Alt_4,C_5,D_{10}\}$ cannot occur, and we also need to compute the orbit lengths of the subgroups of $\Alt_5$ of orders $1$, $2$, and $4$). 
For part (iii), one can use \Magma \cite{MR1484478} for example, to see that $S_0^2$ has four orbits of length $5$ and four orbits of length $25$, in its action on the cosets of  $J=\frac{1}{2}(\Sym_3\times D_{10})$ in $T^2$. For part (iv), $S_0^2$ has one orbit of length $10$ and one orbit of length $50$ in its action on the cosets of any diagonal subgroup in $T^2$.
We now apply Corollary \ref{cor:ProductActionCor} in each case to complete the proof.
\end{proof}

\begin{lem}\label{lem:A6Lemma} Let $G$ be a minimal counterexample to Theorem \ref{thm:MinTransTheorem}, and let $L=T^e$ be a minimal normal subgroup of $G$, where $T=\Alt_6$. Let ${H}$ be the intersection of $L$ with a point stabilizer for $G$, and let $a$ be the number of $L$-orbits. 
Then $G$ has a soluble subgroup $F$ such that one of the following holds:
\begin{enumerate}[(i)]
    \item If $H$ is as in case (1)(a) of Corollary \ref{cor:LPoss}, then $F$ has orbit lengths $2^{b}5aX_i$, $2^{b}50aY_j$, where $$(\sum_i X_i,\sum_j Y_j,b)\in\{(4,4,1),(4,4,0),(2,2,0)\}.$$
    \item If $H$ is as in case (1)(b) of  Corollary \ref{cor:LPoss}, and $G/L$ is soluble, then $F$ either has orbit lengths $2^b5aX_i$, where
    $$(\sum_i X_i,b)\in\{(4,1),(12,1),(4,0),(3,0),(2,0)\},$$
    or $5aX_i$, $10aY_j$, where $(\sum_i X_i,\sum_j Y_j)\in\{(4,4),(2,2)\}$.
\end{enumerate}
\end{lem}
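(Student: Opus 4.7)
The plan is to mirror the proof of Lemma \ref{lem:A5Lemma} with $\Alt_5$ replaced by $T=\Alt_6$. I would set $S_0 := D_{10} \le T$ (the normalizer of a Sylow $5$-subgroup), $S := S_0^e \le L$, and $F := N_G(S)$. The hypotheses of Corollary \ref{cor:ProductActionCor} are met: all $D_{10}$-subgroups of $\Alt_6$ form a single $T$-conjugacy class (they are precisely the Sylow $5$-normalizers, all $T$-conjugate by Sylow's theorem), so $S_0$ is $T$-conjugate to $S_0^\alpha$ for every $\alpha \in \Aut(T)$; and $N_T(S_0) = D_{10}$ is soluble. Hence, by Lemma \ref{FrattiniArgument}, $G = L\cdot N_G(S)$.

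Next I would argue that $G/K$ is soluble in both parts, so that Corollary \ref{cor:ProductActionCor} applies with $R=G$. In part (i), $H_{e-1}$ has index $2^{x_{e-1}}\cdot 3$ and $H_e$ has index $2^{x_e}\cdot 5$ in $T$, giving $|L:H|=2^{x_{e-1}+x_e}\cdot 15$; arguing exactly as in Lemma \ref{lem:A5Lemma}(i), the action of $G/K$ modulo a suitable block system has $2$-power degree, hence is soluble by the minimality of $G$. In part (ii), $K \ge L$ and $G/L$ is soluble by hypothesis, so $G/K$ is soluble.

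The bulk of the proof is then a concrete orbit computation parallel to the table in Lemma \ref{lem:A5Lemma}. For each subgroup $J \le \Alt_6$ whose index in $T$ has the form $2^a\cdot 3$, $2^a\cdot 5$, or $2^a\cdot 15$, I would record the orbit lengths of $S_0 = D_{10}$ on the coset space $T/J$; the relevant $J$ are, up to $T$-conjugacy, the subgroups appearing in Proposition \ref{prop:nonPi}(i)(b) (two classes of $\Alt_5$, two classes of $\Sym_4$, $\Alt_4$, $\Sym_3\times\Sym_3\cap\Alt_6$, and smaller intransitive subgroups) together with transitive subgroups such as $(\Sym_3\wr\Sym_2)\cap\Alt_6$. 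In part (i), by Lemma \ref{lem:ProductAction} the $S_0^2$-orbit lengths on $T^2/(H_{e-1}\times H_e)$ are the pairwise products $c_i d_j$ of the $S_0$-orbit lengths on $T/H_{e-1}$ and $T/H_e$; extending trivially over the $T^{e-2}$ factor and applying Corollary \ref{cor:ProductActionCor} produces $F$-orbit lengths of the form $a\cdot c_i d_j$, which must be matched against the three tuples $(4,4,1)$, $(4,4,0)$, $(2,2,0)$. Part (ii) is analogous, reading off the $S_0$-orbits on $T/H_e$ for each intransitive $H_e$ not $T$-conjugate to the natural point stabilizer $\Alt_5$, and matching against the listed tuples.

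The main obstacle is purely enumerative: verifying that no pair $(H_{e-1},H_e)$ in (i) and no $H_e$ in (ii) yields orbit-length data outside the stated tuples. These are finite checks, easily done by hand and safely confirmable in \Magma; no conceptual ideas beyond those of Lemma \ref{lem:A5Lemma} are required.
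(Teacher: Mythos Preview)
Your proposal is correct and follows essentially the same approach as the paper's own proof: choose $S_0=D_{10}\le\Alt_6$, set $S=S_0^e$ and $F=N_G(S)$, verify the hypotheses of Corollary~\ref{cor:ProductActionCor} (single $T$-class of $D_{10}$'s, soluble normalizer), observe that $G/K$ is soluble in both cases so one may take $R=G$, and then reduce to a finite orbit computation for $S_0$ on the relevant coset spaces. The paper records these orbit lengths in a small table (for $J\in\{\Alt_3\times\Alt_3,\ (\Alt_3\times\Alt_3).2,\ (\Alt_3\times\Alt_3).2.2,\ \Alt_5\text{ (two classes)}\}$ in case~(i), and for the intransitive $J$ with $|J|\in\{3,6,12,24\}$ in case~(ii)); your plan to tabulate and match is exactly this. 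One minor tightening: in case~(i) you need not pass to a further block system---since $|L:H|=2^{x_0}\cdot 15$, the set $\Sigma$ of $L$-orbits already has $2$-power size, so $G/K\cong G^{\Sigma}$ is a minimal transitive $2$-power-degree group and hence a $2$-group.
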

\begin{proof}
Set $S_0:=D_{10}<\Alt_5<\Alt_6$, $S:=S_0^e$, and $F:=N_G(S)$. Then $S_0$ has soluble normalizer in $\Alt_6$, and $D_{10}$ is $\Alt_6$-conjugate to $S_0^{\alpha}$ for all $\alpha\in\Aut(\Alt_6)$. Thus, (i) and (ii) in Corollary \ref{cor:ProductActionCor} hold. Note also that, as in the proof of Lemma \ref{lem:A5Lemma}, $G$ modulo the kernel $K$ of the action of $G$ on the set of $L$-orbits is soluble in each of (i) and (ii). Thus, we can take $R=G$ in Corollary \ref{cor:ProductActionCor} (and hence $t=1$, $a=a_1$).
Since either $H$ is $\mathrm{Sym}_e$-conjugate to $T^{e-2}\times J$ with $J=H_{e-1}\times H_e\le T^2$, or $H$ is $\mathrm{Sym}_e$-conjugate to $T^{e-1}\times J$ with $J=H_e\le T$, we therefore just need to compute the orbit lengths for $S_0^2$ [respectively $S_0$] in the case $e=2$ [resp. $e=1$] and then apply Lemma \ref{lem:ProductAction} and Corollary \ref{cor:ProductActionCor}.

Suppose first that we are in case (i). Then, without loss of generality, we may assume that $|T_i:H\pi_{e-1}|=2^{x_1}3$ and $|T_i:H\pi_{e}|=2^{x_2}5$, where $x_1+x_2=x$. Hence, $H_{e-1}$ is in one of the two conjugacy classes of $\Alt_5$ in $\Alt_6$, while $H_e\in\{\Alt_3\times\Alt_3,(\Alt_3\times\Alt_3).2,(\Alt_3\times\Alt_3).2.2\}$. The orbit lengths of $S_0$ acting on the cosets of $H_{e-1}$ and $H_e$ are then given in Table \ref{tab:A6}.
\begin{table}[]
    \centering
    \begin{tabular}{c|c}
      $J$ & Orbit lengths for $S=D_{10}<\Alt_6$ acting on the cosets of $J<\Alt_6$ \\
      \hline
        $\Alt_3\times\Alt_3$ & four of length $10$\\
        $(\Alt_3\times\Alt_3).2$ & four of length $5$\\
        $(\Alt_3\times\Alt_3).2.2$ & two of length $5$\\
        $\Alt_5$ (transitive)   & one of length $1$, one of length $5$\\
        $\Alt_5$ (intransitive) & one of length $1$, one of length $5$\\
        \hline
    \end{tabular}
    \vspace{0.1cm}
    \caption{Orbit lengths for $S=D_{10}<\Alt_6$ acting on the cosets of a subgroup $J<\Alt_6$}
    \label{tab:A6}
\end{table}
The computations of the possible orbit lengths are then completely analogous to the computations in the proof of Lemma \ref{lem:A5Lemma}.

If we are in case (ii), then $H_e$ is either one of the groups $J$ in Table \ref{tab:A6}, with $J$ intransitive, or $|H_e|\in\{3,6,12,24\}$. We can quickly compute the orbit lengths of $S_0$ in these latter cases, and part (ii) follows as in the proof of Lemma \ref{lem:A5Lemma}.
\end{proof}

\begin{lem}\label{lem:OmLemma} Let $G$ be a minimal counterexample to Theorem \ref{thm:MinTransTheorem}, and let $L=T^e$ be a minimal normal subgroup of $G$, where $T\in\{\PSp_4(4),\POmega^+_8(2)\}$. Let ${H}$ be the intersection of $L$ with a point stabilizer for $G$, and let $a$ be the number of $L$-orbits. 
Then $G$ has a soluble subgroup $F$ such that either $n=120a$ and $F$ has two orbits of lengths $24a$ and $96a$; or $n=960a$ and $F$ has two orbits of length $192a$ and $768a$.
\end{lem}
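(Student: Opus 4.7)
The plan is to apply Corollary \ref{cor:ProductActionCor} directly in each of the stated configurations. By Corollary \ref{cor:LPoss}(5)--(6), the pair $(T, H_e)$ is one of $(\PSp_4(4), \PSp_2(16))$, $(\PSp_4(4), \PSp_2(16).2)$, or $(\POmega_8^+(2), \Alt_9)$, giving $|L:H| \in \{240, 120, 960\}$. The first case reduces to the second via the inclusion $\PSp_2(16) < \PSp_2(16).2$, which produces a $G$-invariant system of blocks of size $2$ on $[n]$ with soluble block kernel; Remark \ref{rem:MinTransRemark} then lifts the orbit structure from the smaller degree back to $[n]$. So it suffices to treat $H_e = \PSp_2(16).2$ (giving $n = 120a$) and $H_e = \Alt_9$ (giving $n = 960a$).

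For each base case I would exhibit a proper soluble subgroup $S_0 \le T$ satisfying the two hypotheses of Corollary \ref{cor:ProductActionCor}:
\begin{enumerate}[(a)]
\item the $T$-conjugacy class of $S_0$ is stable under $\Aut(T)$; and
\item $N_T(S_0)$ is soluble.
\end{enumerate}
For $T = \PSp_4(4)$ a natural candidate is a Borel subgroup or the normalizer of a Singer-type cyclic subgroup of order $17$; both have $\Aut(T)$-stable $T$-class and soluble normalizer. For $T = \POmega_8^+(2)$ the choice is more delicate since $\Aut(T)$ contains the triality automorphism, so $S_0$ must lie in a $T$-class fixed by triality; a suitable $S_0$ can be pinpointed using the Atlas or \Magma \cite{MR1484478}.

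Because $|L:H|$ absorbs all odd-prime divisors of $n$, the quotient $G/K$ (acting on the set of $L$-orbits, of cardinality $a$) has $2$-power degree. By minimal transitivity of $G$, the group $G/K$ is itself a minimal transitive $2$-group, hence a $2$-group, hence soluble; so we may set $R = G$ in Corollary \ref{cor:ProductActionCor}, giving $t = 1$ and $a_1 = a$. Since $H = T^{e-1} \times H_e$, the parameters in that corollary simplify to $k = 1$, $m = 1$, and $f(n_j) = 1$ for each distinct orbit length $n_j$ of $S_0$ on $T/H_e$. The central computational step is then to verify, in \Magma, that $S_0$ has exactly two orbits on $T/H_e$, of sizes $24$ and $96$ in the $\PSp_4(4)$ case and $192$ and $768$ in the $\POmega_8^+(2)$ case. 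Once this is in hand, Corollary \ref{cor:ProductActionCor} produces the required soluble subgroup $F = N_G(S_0^e)$ with orbits of lengths $24a, 96a$ (respectively $192a, 768a$) on $[n]$.

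The principal obstacle is isolating a suitable $S_0$ for $T = \POmega_8^+(2)$: the triality-invariance condition (a) severely restricts the admissible $T$-classes, and once a candidate is found one must separately confirm both that it has soluble normalizer and that it cuts the coset space $T/\Alt_9$ into orbits of exactly the required sizes $192$ and $768$. Both are finite checks, but they rest on concrete computation rather than a structural argument.
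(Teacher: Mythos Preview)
Your strategy---choose an $\Aut(T)$-stable $S_0 \le T$ with soluble normaliser and apply Corollary~\ref{cor:ProductActionCor}---is exactly the paper's. For $T=\PSp_4(4)$ your Borel suggestion in fact coincides with the paper's choice: the paper writes ``the maximal parabolic subgroup of $T$ of index $425$,'' but $425$ is precisely the index of the Borel subgroup. For $T=\POmega_8^+(2)$ the paper resolves your principal obstacle by taking $S_0$ to be the maximal parabolic attached to the central node of the $D_4$ diagram, of index $1575$: this is the unique $\Aut(T)$-stable conjugacy class of maximal parabolics (the three end-node parabolics being fused by triality), it is soluble and self-normalising, and a direct check shows it has exactly two orbits on $T/H_e$ of the stated sizes. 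So no elaborate search is needed; the triality-invariant parabolic is the natural candidate.

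Your reduction of the case $H_e = \PSp_2(16)$ to $H_e = \PSp_2(16).2$ via a block system of size~$2$ is both unnecessary and slightly misdirected. Even granting the existence of a $G$-invariant such system (which does follow from $N_L(H) = T^{e-1}\times\PSp_2(16).2$ being normalised by $G_\omega$), Remark~\ref{rem:MinTransRemark} lifts by \emph{doubling} the orbit sizes, yielding $48a$ and $192a$ at degree $240a$---which matches neither alternative in the statement as written. The cleaner route, and the one the paper takes, is simply to use the same $S_0$ for every admissible $H_e$ and compute the $S_0$-orbits on $T/H_e$ directly in each case.
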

\begin{proof}
In these cases, $H$ is $\mathrm{Sym}_e$-conjugate to $T^{e-1}\times H_e$, where $(T,H_e)$ is one of the pairs $(\PSp_4(4),\PSp_2(16))\text{ (}2 \text{classes), }(\POmega^+_8(2),\Alt_9)\text{ (}3\text{ classes), or }(\POmega^+_8(2),\POmega_7(2))\text{ (}3\text{ classes)}$. Thus, $|T:H|$ is $120$, $960$, or $120$, respectively. If $T=\PSp_4(4)$, then let $S_0$ be the maximal parabolic subgroup of $T$ of index $425$. If $T=\POmega^+_8(2)$, then let $S_0$ be the maximal parabolic subgroup of $T$ of index $1575$. Then in each case, $N_T(S_0)$ is soluble, and $S_0$ and $S_0^{\alpha}$ are $T$-conjugate for all $\alpha$ in $\Aut(T)$. That is, (i) and (ii) in Corollary \ref{cor:ProductActionCor} hold. Moreover, $S_0$ has two orbits in its action on the cosets of $H_e$ in $T$, of lengths $24$ and $96$ when $|T:H|=120$, and lengths $192$ and $768$ when $|T:H|=960$. 

Let $\Sigma$ be the set of $L$-orbits. Then since $15$ divides $|L:H|$ in each case, $G^{\Sigma}$ is a $2$-group. Thus, as in the proof of Lemmas \ref{lem:A5Lemma} and \ref{lem:A6Lemma}, we see that either $n=120a$ and $F$ has two orbits of lengths $24a$ and $96a$, or $n=960a$ and $F$ has two orbits of length $192a$ and $768a$.  
\end{proof}

In the next three lemmas we deal with the cases $T\in\{\Alt_8,\Alt_{16},\Lg_2(q)\text{ : }q=2^{x'}3^{y'}5^{z'}-1\}$. Apart from the case $q=2^{x'}3^{y'}5^{z'}-1$ with $\{y',z'\}=\{1,0\}$, we will find a soluble subgroup $S$ of $L$ with convenient orbit lengths, and then simply set $F:=S$ (we do not need to trouble to find a soluble subgroup with fewer orbits, as we did in the $T\in\{\Alt_5,\Alt_6,\PSp_4(4),\POmega^+_8(2)\}$ cases).
\begin{lem}\label{lem:A8Lemma} Let $G$ be a minimal counterexample to Theorem \ref{thm:MinTransTheorem}, and let $L=T^e$ be a minimal normal subgroup of $G$, where $T=\Alt_8$. Let ${H}$ be the intersection of $L$ with a point stabilizer for $G$, so that ${H}$ is of type (2) in Corollary \ref{cor:LPoss}. That is, $H$ is $\mathrm{Sym}_e$-conjugate to $T^{e_1}\times Y_1\times\hdots\times Y_{e_2}\times H_e$, where $e_1+e_2=e-1$; $Y_i$ is $T$-conjugate to $\Alt_7$ for each $i$; and $H_e$ has shape $7:3$, $2^3:(7:3)$, $\GL_3(2)$, or $2^3:\GL_3(2)$.
Let $a$ be the number of $L$-orbits, let $S_0:=7:3<T$, and set $F:=S_0^e<L$. Then
\begin{enumerate}[(i)]
\item Suppose that $H_e=7:3$, and define $(s_1,l_1):=(1,1)$; $(s_2,l_2):=(11,7)$; and $(s_3,l_3):=(42,21)$. Then $F$ has $as_j\binom{e_2}{i}$ orbits of size $7^{i}l_j$, for each $1\le i\le e_2$, $1\le j\le 3$.
\item Suppose that $H_e\cong 2^3:(7:3)$ or $K\cong \GL_3(2)$, and define $(s_1,l_1):=(1,1)$; $(s_2,l_2):=(5,7)$; and $(s_3,l_3):=(4,21)$. Then $F$ has $as_j\binom{e_2}{i}$ orbits of size $7^{i}l_j$, for each $1\le i\le e_2$, $1\le j\le 3$.
\item Suppose that $H_e=2^3:\GL_3(2)$, and define $(s_1,l_1):=(1,1)$; and $(s_2,l_2):=(2,7)$. Then $F$ has $as_j\binom{e_2}{i}$ orbits of size $7^{i}l_j$, for each $1\le i\le e_2$, $1\le j\le 2$.
\end{enumerate}
\end{lem}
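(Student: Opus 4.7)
The approach will be a direct application of Lemma~\ref{lem:ProductAction}(ii), with the decomposition $H=T^{e_1}\times Y_1\times\cdots\times Y_{e_2}\times H_e$ furnishing exactly the three blocks of coordinates that the lemma requires: $e_1$ ``transitive'' coordinates, $e_2$ ``two-orbit'' coordinates, and one ``last'' coordinate.  Once the $S_0$-orbit sizes on each of the three types of factor are tabulated, the lemma immediately produces the $F$-orbit tally on $H\backslash L$; multiplying multiplicities by $a$ then accounts for the $a$ distinct $L$-orbits in $\Omega$.

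The three pieces of input data will be computed as follows.  First, on each of the initial $e_1$ factors, $S_0$ acts on the singleton set $T/T$, contributing a single orbit of size $1$.  Second, on each of the $e_2$ middle factors we have $H_i\cong\Alt_7$, a natural point-stabilizer; in the natural action of $T=\Alt_8$ on $8$ points, $S_0=7{:}3$ is the normalizer of a Sylow $7$-subgroup and hence fixes one point and is transitive on the other seven, so $(k_1,k_2)=(1,7)$.  Third, for each of the four choices of $H_e$, I would compute the $S_0$-orbits on $T/H_e$ directly (equivalently, the double cosets $H_e\backslash T/S_0$); the claim is that these have sizes $(1,7,21)$ with multiplicities $(s_1,s_2,s_3)$ as listed in (i)--(iii), and as a sanity check one verifies $\sum_j s_j l_j=|T:H_e|$ (the values $960$, $120$, $120$, $15$ respectively for the four choices of $H_e$).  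These last computations are routine and can be carried out in \Magma.

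Feeding this into Lemma~\ref{lem:ProductAction}(ii) with $e-e_1-1=e_2$, the number of $F$-orbits on $H\backslash L$ of length $k_1^{\,e_2-i'}k_2^{\,i'}l_j=7^{i'}l_j$ is $\binom{e_2}{i'}s_j$, for each $0\le i'\le e_2$ and each orbit length $l_j\in\{1,7,21\}$ occurring on the last factor.  To pass from $F$-orbits on $H\backslash L$ to $F$-orbits on $\Omega$, I would note that all point-stabilizers of $L$-orbits are $G$-conjugate to $H$, and since $S_0$ is $T$-conjugate to $S_0^{\alpha}$ for every $\alpha\in\Aut(T)$ (all Sylow $7$-subgroup normalizers in $\Alt_8$ being conjugate already in $T$), the subgroup $F=S_0^e$ is $L$-conjugate to all of its $G$-conjugates.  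Consequently $F$ realises the same orbit structure on each of the $a$ distinct $L$-orbits, and the total count on $\Omega$ is $a\,s_j\binom{e_2}{i}$ orbits of size $7^i l_j$, as required.

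The only non-bookkeeping obstacle is the enumeration of the $S_0$-orbits on $T/H_e$ in the four cases, and in particular checking that the orbits of size $3$ (which are \emph{a priori} allowed as divisors of $21$) do not occur.  Since every stabilizer involved is small, this can be handled either by a direct double-coset calculation or by a machine verification in \Magma, and it is this computation that does the genuine work of the lemma.
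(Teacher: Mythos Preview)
Your proposal is correct and follows essentially the same approach as the paper: compute the $S_0$-orbits on $T/\Alt_7$ (sizes $1$ and $7$) and on $T/H_e$ (the pairs $(s_j,l_j)$), then apply Lemma~\ref{lem:ProductAction}(ii), and finally multiply multiplicities by $a$. Your justification for the last step---that all $7{:}3$ subgroups of $\Alt_8$ are $T$-conjugate, so $F$ has the same orbit structure on every $L$-orbit---is slightly more explicit than the paper's, which simply notes that the computation is valid for each $G$-conjugate of $H$ and that $F$ fixes each $L$-orbit setwise.
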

\begin{proof}
Note that $S_0$ has orbits of lengths $7$ and $1$ in its action on the cosets of $\Alt_7$ in $T$, while $S_0$ has $s_i$ orbits of length $l_i$ in its action on the cosets of $H_1$ in $T$, where $s_i$ and $l_i$ are as defined in each of the listed cases.
We then apply Lemma \ref{lem:ProductAction}(ii) to find the orbit lengths of $F$ in its action on the cosets of $H$ in $L$: we see that $F$ has $s_j\binom{e_2}{i}$ orbits of size $7^{i}l_j$ in each case.

Finally, since this holds for each $G$-conjugate of $H$ in $L$, and $F$ fixes each $L$-orbit, we deduce that $F<G$ has $as_j\binom{e_2}{i}$ orbits of size $7^{i}l_j$ in $[n]$, as required.
\end{proof}

\begin{lem}\label{lem:A16Lemma} Let $G$ be a minimal counterexample to Theorem \ref{thm:MinTransTheorem}, and let $L=T^e$ be a minimal normal subgroup of $G$, where $T=\Alt_{16}$. Let ${H}$ be the intersection of $L$ with a point stabilizer for $G$, so that ${H}$ is of type (3) in Corollary \ref{cor:LPoss}. That is, $H$ is $\mathrm{Sym}_e$-conjugate to $T^{e_1}\times Y_1\times\hdots\times Y_{e_2}\times H_e$, where $e_1+e_2=e-1$; $Y_i$ is $T$-conjugate to $\Alt_{15}$ for each $i$; and $H_e$ has shape $\Alt_{14}.2^b$, for some $b\in \{0,1\}$.
Let $a$ be the number of $L$-orbits, let $S_0:=C_{15}<T$, and set $F:=S_0^e<L$. Then $F$ has $\frac{16}{2^b}\binom{e_2}{i}a$ orbits of size $15^{i+1}$, for each $0\le i\le e_2$. 
\end{lem}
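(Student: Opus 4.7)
The plan follows the template of the proof of Lemma~\ref{lem:A8Lemma}. Realize $S_0 = C_{15}$ as the cyclic subgroup of $T = \Alt_{16}$ generated by a $15$-cycle, so that $S_0$ acts on $\{1,\ldots,16\}$ with one fixed point and one regular orbit of length $15$. The strategy is to compute the $S_0$-orbit lengths on the two relevant coset spaces $T/\Alt_{15}$ and $T/H_e$, combine them via Lemma~\ref{lem:ProductAction}(ii), and then multiply by $a$ to account for the $L$-orbits.

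For $T/\Alt_{15}$ (the natural action of $T$ on $16$ points), $S_0$ clearly has orbits of lengths $15$ and $1$. For $T/H_e$ with $H_e$ of shape $\Alt_{14}.2^b$, I would identify the cosets with ordered ($b=0$) or unordered ($b=1$) pairs of distinct elements from $\{1,\ldots,16\}$. Every $S_0$-orbit on such pairs has length $15$: a pair contained in $\{1,\ldots,15\}$ has trivial $C_{15}$-stabilizer, because a nonzero shift $k$ fixing the (un)ordered pair $\{i,j\}$ either forces $k=0$ directly, or (in the unordered case) gives $2(j-i)\equiv 0\pmod{15}$, which has only the trivial solution modulo $15$; and pairs containing the fixed point $16$ obviously form orbits of length $15$ as well. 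A direct count then yields exactly $16/2^b$ such orbits.

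Feeding this data into Lemma~\ref{lem:ProductAction}(ii), the $e_1$ coordinates with $G_i=T$ and $H_i=T$ contribute transitively; each of the $e_2$ coordinates where $H_i$ is $T$-conjugate to $\Alt_{15}$ contributes two orbits of lengths $k_1=15$ and $k_2=1$; and the final coordinate contributes $r=16/2^b$ orbits all of length $l_j=15$. The lemma then outputs that $F=S_0^e$, acting on the cosets of $H$ in $L$, has $\binom{e_2}{i}\cdot(16/2^b)$ orbits of size $15\cdot 15^{i}\cdot 1^{e_2-i}=15^{i+1}$ for each $0\le i\le e_2$. Since this count applies to each of the $a$ many $L$-orbits in $[n]$, and $F\le L$ fixes every $L$-orbit setwise, the total is $\tfrac{16}{2^b}\binom{e_2}{i}a$ orbits of size $15^{i+1}$, as required.

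The only computation that requires more than bookkeeping is the orbit-length analysis on $T/H_e$, and even this reduces to the elementary $\Z/15\Z$-stabilizer argument above; the remaining steps are a direct application of Lemma~\ref{lem:ProductAction}(ii), followed by the standard factor of $a$ that accounts for the $G$-conjugates of $H$ in $L$.
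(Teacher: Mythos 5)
Your proof is correct and follows essentially the same route as the paper, which computes the $S_0$-orbit lengths on the cosets of $\Alt_{15}$ (lengths $15$ and $1$) and of $H_e$ ($16/2^b$ orbits of length $15$) and then applies Lemma \ref{lem:ProductAction}(ii) exactly as in the $\Alt_8$ case. Your explicit identification of the cosets of $\Alt_{14}.2^b$ with ordered/unordered pairs and the trivial-stabilizer check modulo $15$ simply supplies the detail behind the orbit count that the paper asserts without proof.
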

\begin{proof}
The proof is identical to the $\Alt_8$ case above: $S_0$ has orbits of lengths $15$ and $1$ in its action on the cosets of $\Alt_{15}$ in $T$, while $S_0$ has $\frac{16}{2^b}$ orbits of length $15$ in its action on the cosets of $H_e$ in $T$
We then apply Lemma \ref{lem:ProductAction} and argue as in the proof of Lemma \ref{lem:A8Lemma} to complete the proof. 
\end{proof}

\begin{lem}\label{lem:MersenneLemma} Let $G$ be a minimal counterexample to Theorem \ref{thm:MinTransTheorem}, and let $L=T^e$ be a minimal normal subgroup of $G$, where $T=\Lg_2(q)$, with $q$ an odd prime power of the form $q=2^{x'}3^{y'}5^{z'}-1$, $0\le y',z'\le 1$. Let ${H}$ be the intersection of $L$ with a point stabilizer for $G$, so that ${H}$ is of type (4) in Corollary \ref{cor:LPoss}. Let $a$ be the number of $L$-orbits, and set $F:=S_0^e<L$, where $S_0$ is a maximal parabolic subgroup of $T$. Then
\begin{enumerate}[(i)]
\item Suppose that $H$ has type (4)(a) from Corollary \ref{cor:LPoss}, so that $q=p$ is a Mersenne prime with $15$ dividing $p-1$, and $H$ is $\Sym_e$-conjugate to $T^{e_1}\times Y_1\times\hdots\times Y_{e_2}\times (p:\frac{p-1}{6})\times (p:\frac{p-1}{10})$, where each $Y_i$ is a maximal parabolic subgroup of $T$. Then $F:=S$ has $a\binom{e_2}{i}$ orbits of size $15p^{i}$; $2a\binom{e_2}{i}$ orbits of size $15p^{i+1}$; and $a\binom{e_2}{i}$ orbits of size $15p^{i+2}$, for each $0\le i\le e_2$.
\item Suppose that $H$ has type (4)(b) from Corollary \ref{cor:LPoss}, so that $q=p$ is a Mersenne prime and $H$ is $\mathrm{Sym}_e$-conjugate to $T^{e_1}\times L_1$, where $L_1$ is a subdirect product of the form $L_1=\frac{1}{t}(L_2\times H\pi_e)$; $L_2$ is a subdirect product of the form $L_2=\frac{1}{s}(P_1\times\hdots\times P_{e_2})$; each $P_i$ is a maximal parabolic subgroup of $T$; $H\pi_e$ is a subgroup of index $r$ in a maximal parabolic subgroup of $T$; $rst=3^{y_0}5^{z_0}$ is the odd part of $|L:H|$; and $e_1+e_2=e-1$. Then $F:=S$ has $a\binom{e_2+1}{i}$ orbits of size $3^{y_0}5^{z_0}p^{i}$ for each $0\le i\le e_2+1$.
\item Suppose that $H$ has type (4)(c) from Corollary \ref{cor:LPoss}, so that $H$ is $\mathrm{Sym}_e$-conjugate to $T^{e-1}\times (q:\frac{q-1}{2r})$, where $y'+z'=1$; $0\in\{y',z'\}$; and $r=3^{1-y'}5^{1-z'}>1$. Then $F:=N_G(S)$ is soluble, and has precisely two orbits, of sizes $ar$ and $aqr$.
\end{enumerate}
\end{lem}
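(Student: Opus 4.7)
All three parts rest on the orbit structure of $S_0$, a fixed maximal parabolic of $T=\Lg_2(q)$, acting on cosets of each subgroup that appears as a factor of (a direct product enveloping) $H$. Since $T$ is doubly transitive on the $q+1$ points of the projective line, a Bruhat/Mackey argument gives: $S_0$ has one orbit of size $1$ on $T/T$; orbits of sizes $1$ and $p$ on $T/P'$ for any maximal parabolic $P'$; and orbits of sizes $r$ and $qr$ on $T/Y$ for $Y=q:\frac{q-1}{2r}$ a subgroup of index $r$ of a maximal parabolic, the short orbit being the $S_0$-orbit of the coset $wY$ for a Weyl element $w$ with $wYw^{-1}\subseteq S_0$. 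As in Corollary \ref{cor:ProductActionCor}, $L$ acts isomorphically on each of the $a$ $L$-orbits, so throughout it suffices to compute $F$-orbits on $L/H$ and then multiply by $a$.

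For case (i), $H$ is a direct product, and Lemma \ref{lem:ProductAction}(i) expresses the $S_0^e$-orbits on $L/H$ as products of the per-factor orbits. Indexing by $i\in\{0,\dots,e_2\}$, the number of the $e_2$ parabolic factors contributing the long (size-$p$) orbit, and allowing the factors $p:\frac{p-1}{6}$ and $p:\frac{p-1}{10}$ to choose their short or long orbits independently, produces $\binom{e_2}{i}$ orbits of size $15p^i$, $2\binom{e_2}{i}$ orbits of size $15p^{i+1}$, and $\binom{e_2}{i}$ orbits of size $15p^{i+2}$; multiplying by $a$ yields (i).

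For case (ii), set $P^{\star}:=T^{e_1}\times P_1\times\cdots\times P_{e_2}\times H\pi_e$. Then $H=T^{e_1}\times L_1$ is subdirect in $P^{\star}$ of index $st$, and $H$ contains $[T,T]^{e_1}\times[P_1,P_1]\times\cdots\times[P_{e_2},P_{e_2}]\times[H\pi_e,H\pi_e]$, since $L_2=\frac{1}{s}(P_1\times\cdots)$ contains the full derived direct product and $L_1\supseteq[L_2,L_2]\times[H\pi_e,H\pi_e]$. The hypothesis $(S_0\cap Q^x)[Q^x,Q^x]=Q^x$ of Lemma \ref{SubLemma} is checked case by case for $Q\in\{T,P_i,H\pi_e\}$: trivially for $Q=T$; for $Q$ a maximal parabolic distinct from $S_0$, $S_0\cap Q^x$ is a maximal torus of order $(q-1)/2$, whose product with $[Q^x,Q^x]=U^x$ (of coprime order $q$) is all of $Q^x$; and the case $Q=H\pi_e$ is similar when $(q-1)/(2r)>1$. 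Lemma \ref{SubLemma} then shows the $S_0^e$-orbits on $L/H$ are $st$ times the orbits on $L/P^{\star}$. By Lemma \ref{lem:ProductAction}, the latter are $\binom{e_2}{i}$ orbits of size $rp^i$ and $\binom{e_2}{i}$ of size $rp^{i+1}$ for $0\le i\le e_2$. Applying $rst=3^{y_0}5^{z_0}$ and the Pascal identity $\binom{e_2}{j}+\binom{e_2}{j-1}=\binom{e_2+1}{j}$ packages these as $\binom{e_2+1}{j}$ orbits of size $3^{y_0}5^{z_0}p^j$ for $0\le j\le e_2+1$, and multiplying by $a$ gives (ii).

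For case (iii), $|L:H|=r(q+1)=2^{x'}\cdot 15$ forces $y=z=1$ and $a=2^{x-x'}$, so $G/K$ is minimal transitive of $2$-power degree, hence soluble; combined with $K/L$ soluble (Lemma \ref{lem:RedLemma}), this gives $G/L$ soluble. Since all maximal parabolics of $T$ form a single $\Aut(T)$-conjugacy class, Lemma \ref{FrattiniArgument} gives $G=LN_G(S)$; together with $N_L(S)=N_T(S_0)^e=S$ (a Borel being self-normalising in $\Lg_2(q)$), this yields $F/S\cong G/L$, so $F=N_G(S)$ is soluble. For the orbits, the $S_0^e$-orbits on $L/H$ come only from the last factor, giving one of size $r$ and one of size $qr$; since $F/L$ acts transitively on $\Sigma$, and $\Stab_F(\Delta)^\Delta$ contains $S^\Delta$ and can only permute $S^\Delta$-orbits of equal length (and only one of each length $r,qr$ exists), $F$ has exactly two orbits on $[n]$, of sizes $ar$ and $aqr$. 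The main obstacle lies in case (ii): verifying the complement-type hypothesis of Lemma \ref{SubLemma} requires a careful unpacking of parabolic intersections in $\Lg_2(q)$, and the degenerate low-$q$ situations $(q,r)\in\{(7,3),(31,15)\}$, in which $H\pi_e$ collapses to the unipotent radical, need a direct orbit count in place of a blanket appeal to Lemma \ref{SubLemma}; that count nevertheless delivers the same formula.
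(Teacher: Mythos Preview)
Your proof is correct and follows essentially the same approach as the paper: compute the $S_0$-orbits on the relevant coset spaces in $T$, then assemble via Lemma~\ref{lem:ProductAction} and Lemma~\ref{SubLemma}, and handle (iii) by the Frattini argument combined with solubility of $G/L$. The only organizational differences are that in (ii) you invoke Lemma~\ref{SubLemma} once for $L_1$ inside $P_1\times\cdots\times P_{e_2}\times H\pi_e$ (which requires the small extra observation that $[L_2,L_2]=[P_1,P_1]\times\cdots\times[P_{e_2},P_{e_2}]$), whereas the paper applies it twice---first for $L_2$, then for $L_1$; and in (iii) you spell out the argument in full, while the paper simply points to the proof of Lemma~\ref{lem:OmLemma}.
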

\begin{proof} 
We need to determine the orbit lengths of $S_0$ in its action of the cosets of the subgroups $X\in\{q:\frac{q-1}{2},q:\frac{q-1}{6},q:\frac{q-1}{10}\text{ }\mathrm{(}5\mid q-1\mathrm{)},q:\frac{q-1}{15}\text{ }\mathrm{(}5\mid q-1\mathrm{)}\}$ for part (ii); while we need to determine the orbit lengths of $S_0^2$ in its action of the cosets of the subgroup $(p:\frac{p-1}{6})\times (p:\frac{p-1}{10})<T^2$ for part (i).

We first consider the orbits of the action of $S_0\cong q:\frac{q-1}{2}$ on the cosets of itself in $T$. This is equivalent to the action of $S_0$ on the set of one dimensional subspaces in $\mathbb{F}_q^2$. It is well known that $S_0$ has one orbit of size $1$ and one orbit of size $q$ in this action. It follows easily that if $X$ has shape $q:\frac{q-1}{2r}$ with $r$ a divisor of $\frac{q-1}{2}$, then $S_0$ has one orbit of size $r$, and one orbit of size $rq$. 
Next, when $p=q$, $S_0^2$ has four orbits on the cosets of $(p:\frac{p-1}{6})\times (p:\frac{p-1}{10})$, of lengths $15$, $15q$, $15q$, and $15q^2$. Lemma \ref{lem:ProductAction}(ii) then implies that $F$ has $\binom{e_2}{i}$ orbits of size $15p^{i}$; $2a\binom{e_2}{i}$ orbits of size $15p^{i+1}$; and $a\binom{e_2}{i}$ orbits of size $15p^{i+2}$ in its action on the cosest of $H$ in $L$, for each $0\le i\le e_2$. Since our arguments hold with $H$ replaced by any $G$-conjugate of $H$, we deduce that $F$ has $a\binom{e_2}{i}$ orbits of size $15p^{i}$; $2a\binom{e_2}{i}$ orbits of size $15p^{i+1}$; and $a\binom{e_2}{i}$ orbits of size $15p^{i+2}$ in $[n]$, for each $0\le i\le e_2$.

Suppose now that we are in case (ii). Note that \begin{align}\label{nt}
\text{$(S_0\cap P_i^x)[P_i^{x_i},P_i^{x_i}]=P_i^{x_i}$ for all $x_i$ in $T$.} 
\end{align}
Thus, by the arguments above, together with Lemma \ref{SubLemma}, we deduce that $S_0^{e_2}$ has $\binom{e_2}{i}$ orbits of length $sq^i$ on the cosets of $L_2$ in $T^{e_2}$, for each $0\le i\le e_2$. Hence, since $S_0$ has two orbits, of sizes $r$ and $qr$, on the cosets of $H\pi_e$ in $T$, we deduce from Lemma \ref{lem:ProductAction} that $S_0^{e_2+1}$ has $\binom{e_2}{i}$ orbits of length $srq^i$ and $\binom{e_2}{i}$ orbits of length $srq^{i+1}$ in its action on the cosets of $L_2\times H\pi_e$ in $T^{e_2+1}$. By (\ref{nt}), we can then use Lemma \ref{SubLemma} again: we get that $S_0^{e_2+1}$ has $\binom{e_2}{i}$ orbits of length $rstq^i$ and $\binom{e_2}{i}$ orbits of length $rstq^{i+1}$ in its action on the cosets of $L_1$ in $T^{e_2+1}$. Note that $rst=3^{x_0}5^{x_0}$ and $\binom{e_2}{i}+\binom{e_2}{i-1}=\binom{e_2+1}{i}$ for $i>0$. We can then deduce from Lemma \ref{lem:ProductAction}(i) (with $m=1$) that $F$ has $\binom{e_2+1}{i}$ orbits of size $3^{y_0}5^{z_0}p^{i}$ in its action on the cosets of $H$ in $L$, for each $0\le i\le e_2+1$. Since our arguments are independent of the choice of $G$-conjugate of $H$ in $L$, we see as above that $F$ has
$a\binom{e_2+1}{i}$ orbits of size $3^{y_0}5^{z_0}p^{i}$ in $[n]$, for each $0\le i\le e_2+1$.

Finally, the proof in case (iii) is identical to the proof of Lemma \ref{lem:OmLemma}. 
\end{proof}

\begin{lem}\label{lem:A5A6Lemma} Let $G$ be a minimal counterexample to Theorem \ref{thm:MinTransTheorem}, and let $L=T^e$ be a minimal normal subgroup of $G$, where $T=\Alt_m$ with $m\in\{5,6\}$, and $G/L$ is insoluble. Let ${H}$ be the intersection of $L$ with a point stabilizer for $G$, and let $a$ be the number of $L$-orbits. Then 
\begin{enumerate}[(i)]
\item $H$ is as in case (1)(b) of Corollary \ref{cor:LPoss};
\item $|T:H_e|=2^{x_1}5$ and $a=2^{x_2}3$;
\item $G/L$ has a unique nonabelian chief factor $L'$ such that $L'\cong \Lg_2(p)^f$, where $p$ is a Mersenne prime; and
\item There exist positive integers $X$, $b$, and $e_2\le e$; positive partitions $\binom{e_2}{i}=\sum_j C^{(i)}_j$ and $X=\sum_k X^{(i,j)}_k$ of $\binom{e_2}{i}$ and $X$ (for each $i,j$); and a soluble subgroup $F$ of $G$, such that one of the following holds:
\begin{itemize}
    \item $T=\Alt_5$, $(X,b)\in\{(2,1),(2,0)\}$, and $F$ has $C^{(i)}_j$ orbits of length
    $2^b3p^i5X^{(i,j)}_k$, for $0\le i\le e_2$, and each $j,k$.
    \item $T=\Alt_6$, $(X,b)\in\{(4,1),(4,0),(2,0)\}$, and $F$ has $C^{(i)}_j$ orbits of length
    $2^b3p^i5X^{(i,j)}_k$, for $0\le i\le e_2$, and each $j,k$. 
\end{itemize}
\end{enumerate}
\end{lem}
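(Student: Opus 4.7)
The plan is to establish the four conclusions in order, each feeding into the next, with parts (i) and (ii) essentially reducing the combinatorial possibilities, part (iii) coming from induction on the quotient $G/K$, and part (iv) from a careful Frattini-type assembly.

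For (i), I would invoke Corollary \ref{cor:LPoss}: since $T \in \{\Alt_5,\Alt_6\}$, $H$ must lie in one of the four subcases (1)(a)--(1)(d). In cases (1)(a), (1)(c), (1)(d) a direct computation of $|L:H|$ shows that it is always divisible by $15$, so $a = n/|L:H|$ is a power of $2$ (because $y,z \leq 1$ are absorbed by $|L:H|$). Then $G/K$ is a minimal transitive group of $2$-power degree, hence a $2$-group, in particular soluble. Combined with $K/L$ soluble from Lemma \ref{lem:RedLemma}, this forces $G/L$ soluble, contradicting the hypothesis. So case (1)(b) holds.

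For (ii), in case (1)(b) we have $|L:H|=|T:H_e|=2^{x_1}3^{y_1}5^{z_1}$ with $H_e$ intransitive and not a natural point stabilizer. One checks: $y_1=z_1=0$ is ruled out since neither $\Alt_5$ nor $\Alt_6$ has a proper subgroup of $2$-power index; $y_1=z_1=1$ is ruled out exactly as in (i); and $(y_1,z_1)=(1,0)$ is ruled out because the only candidate indices $2^{x_1}\cdot 3$ in $\Alt_5$ or $\Alt_6$ arise from transitive subgroups or from the natural point stabilizer. Hence $(y_1,z_1)=(0,1)$, giving $|T:H_e|=2^{x_1}\cdot 5$ and therefore $a=2^{x_2}\cdot 3$.

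For (iii), the quotient $G/K$ is a minimal transitive group of degree $a=2^{x_2}\cdot 3<n$, and is insoluble because $K/L$ is soluble while $G/L$ is not. By minimality of $G$ as a counterexample, Theorem \ref{thm:MinTransTheorem} applies inductively to $G/K$. Part (i) of the theorem gives at most $y+z=1$ nonabelian chief factor. I would then scan the allowed simple groups from Theorem \ref{thm:MinTransTheorem}(ii) against Table \ref{tab:OrbitSizes}: every row with $T\in\{\Alt_5,\Alt_6,\PSp_4(4),\POmega^+_8(2),\Alt_8,\Alt_{16}\}$ or with $\Lg_2(q)$ from row $28$ or $30$ has degree divisible by $5$ or by $9$, neither of which divides $2^{x_2}\cdot 3$; only row $29$ with $y_0=1$, $z_0=0$, $q=p$ Mersenne survives. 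Since $K/L$ is soluble, the chief factors of $G/L$ coincide with those of $G/K$, so the unique nonabelian chief factor is $\Lg_2(p)^f$.

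For (iv), I would emulate the construction in Lemmas \ref{lem:A5Lemma} and \ref{lem:A6Lemma} but with a twist at the top level. Apply the inductive Theorem \ref{thm:MinTransTheorem}(iv) to $G/K$ to obtain a soluble $F'/K\le G/K$ whose orbits on the $a$ $L$-orbits are, by Table \ref{tab:OrbitSizes} row $29$, of length $3p^{i}$ with multiplicity $a''\binom{e_2}{i}$. Next, set $S_0:=D_{10}<T$, $S:=S_0^e<L$, and use the fact that $S_0$ is stable under $\Aut(T)$ together with the Frattini argument (Lemma \ref{FrattiniArgument}) to show $LN_G(S)=G$, then choose $F\le N_G(S)$ containing $N_L(S)$ and mapping onto $F'/K$ (as in the proof of Corollary \ref{cor:ProductActionCor}). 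Solubility of $F$ follows from solubility of $N_L(S)$ and of $F'/K$. The orbit structure on $[n]$ is then assembled by taking each $F'/K$-orbit on $L$-orbits of length $3p^i$, and within the corresponding union of $L$-orbits decomposing by the $S$-orbit lengths on cosets of $H_e$ in $T$, which are read off from Tables \ref{tab:A5} and \ref{tab:A6} for the admissible $H_e$ (the order-$6$ or order-$3$ subgroups when $T=\Alt_5$; the index-$20$ or index-$40$ intransitive subgroups when $T=\Alt_6$). This yields the partition data $C^{(i)}_j$ of $\binom{e_2}{i}$ and $X^{(i,j)}_k$ of $X\in\{2,4\}$ in the required form.

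The main obstacle will be part (iv): aligning the inductive orbit data from $G/K$ with the $S$-orbit data on each $L$-orbit, and verifying that each admissible pair $(T,H_e)$ from (ii) produces exactly the $(X,b)$-combinations listed. The Frattini normalisation step is routine given the earlier machinery, but the case split on $H_e$ (especially in the $\Alt_6$ case, where the intransitive subgroups of odd-prime-to-$5$-power index are more subtle than those of $\Alt_5$) is where careful bookkeeping is needed.
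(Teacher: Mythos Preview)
Your proposal is correct and follows essentially the same approach as the paper: parts (i) and (ii) via Corollary \ref{cor:LPoss} and inspection of the relevant subgroups of $\Alt_5$ and $\Alt_6$, and part (iv) via Corollary \ref{cor:ProductActionCor} with $S_0=D_{10}$ and $R/K$ the soluble subgroup of $G/K$ furnished by row 29 of Table \ref{tab:OrbitSizes}. The only difference is in part (iii), where the paper simply cites \cite[Theorem 1.8]{MR3812195} for the structure of insoluble minimal transitive groups of degree $2^{x_2}\cdot 3$, whereas you recover the same conclusion by applying the inductive hypothesis to $G/K$ and scanning Table \ref{tab:OrbitSizes}; both routes are valid and yours is more self-contained.
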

\begin{proof}
Let $K$ be the kernel of the action of $G$ on the set $\Sigma$ of $L$-orbits, and fix an $L$-orbit $\Delta$.
Then since $G/K$ is an insoluble minimal transitive group, its degree $a$ cannot be a power of $2$. Hence, either $3$ or $5$ must divide $a$. Thus, the odd part of $|\Delta|=|L:H|$ is either $3$ or $5$, and it follows from Corollary \ref{cor:LPoss}, and by inspection of the subgroups of $\Alt_5$ of $\Alt_6$, that the only possibility is that $H$ is as in case (1)(b) of Corollary \ref{cor:LPoss}. Part (i) follows. 

Now, by Corollary \ref{cor:LPoss}(1)(b), $H_e$ is not transitive, and not equal to a natural point stabilizer $\Alt_{m-1}<T=\Alt_m$. Thus, going through the subgroups of $\Alt_5$ and $\Alt_6$, we see that $|T:H_e|$ must be of the form $|T:H_e|=2^{x_1}5$. Hence, $G/K$ is an insoluble minimal transitive group of degree $2^{x_2}3$. This proves (ii). Part (iii) then follows immediately from \cite[Theorem 1.8]{MR3812195}.

We now prove (iv). Set $S_0:=D_{10}<T$, and $S:=S_0^e$. Then as shown in Lemmas \ref{lem:A5Lemma} and \ref{lem:A6Lemma}, the hypotheses in Corollary \ref{cor:ProductActionCor} hold with this choice of $S_0$. Thus we can, and do, apply Corollary \ref{cor:ProductActionCor}, with $R/K$ chosen to be the soluble subgroup of $G/K$ exhibited in Lemma \ref{lem:MersenneLemma}(ii), with $p=2^{x_1}-1$ and $rst=3$ .
We can then deduce (iv) by taking the possible orbit lengths of $S_0$ acting on the cosets of $H_e$ from Tables \ref{tab:A5} and \ref{tab:A6} (noting again that $H_e$ is intransitive, and that $H_e\neq \Alt_4$ in the $\Alt_5$ case). 
\end{proof}

Finally, we are ready to prove Theorem \ref{thm:MinTransTheorem}.
\begin{proof}[Proof of Theorem \ref{thm:MinTransTheorem}]
Let $G$ be a counterexample to the theorem of minimal degree, and let $L\cong T^e$ be a minimal normal subgroup of $G$, with $T$ simple. Then $T$ is nonabelian, by Lemma \ref{lem:RedLemma}. Let $H$ be the intersection of $L$ with a point stabilizer in $G$, and write $|L:H|=2^{x_0}3^{y_0}5^{z_0}$, where $x_0\le x$, $y_0\le y$, and $z_0\le z$. Thus, if $K$ denotes the kernel of the action of $G$ on the set $\Sigma$ of $L$-orbits, then $G^{\Sigma}\cong G/K$ is minimal transitive of degree $2^{x-x_0}3^{y-y_0}5^{z-z_0}$. Also, either $y_0$ or $z_0$ is non-zero, by Corollary \ref{Cor:FratNew}(i). Thus, since $K/L$ is soluble by Lemma \ref{lem:RedLemma}, the minimality of $G$ as a counterexample implies that $G$ has at most $1+y-y_0+z-z_0\le y+z$ nonabelian chief factors. That is, (i) holds.

Now, $T$ is one of the groups in part (ii) of Theorem \ref{thm:MinTransTheorem}, by Corollary \ref{cor:LPoss}, so (ii) also holds. 
Next, we prove (iii). So assume that $G$ has two nonabelian chief factors, and let $L'\cong T'^{e'}$ be a nonabelian chief factor of $G/L$. Then the minimal transitive group $G^{\Sigma}\cong G/K$ has $L'$ as a chief factor, since $K/L$ is soluble. 
Since minimal transitive groups of $2$-power degree are $2$-groups, we deduce that $G^{\Sigma}\cong G/K$ is minimal transitive of degree either (1) $2^{x-x_0}5$ or (2) $2^{x-x_0}3$. The size of an $L$-orbit in these cases is $2^{x_0}3$ or $2^{x_0}5$, respectively. Now, by Corollary \ref{cor:LPoss}, the only possibility for $T$ when the size of an $L$-orbit is $2^{x_0}3$ is $T=\Lg_2(p)$ with $p$ a Mersenne prime; while the only possibility for $T$ when the size of an $L$-orbit is $2^{x_0}5$ is $T\in\{\Alt_5,\Alt_6,\Lg_2(p)\text{ : } p \text{ a Mersenne prime}\}$. This gives us what we need. 
Moreover, inspection of Table \ref{tab:OrbitSizes} and the inductive hypothesis imply that in case (1), we must have $T'\in\{\Alt_5,\Alt_6,\Lg_2(p')\text{ : } p \text{ a Mersenne prime}\}$; while in case (2), we have $T'=\Lg_2(p')$ with $p'$ a Mersenne prime. This proves (iii).

Finally, in Lemmas \ref{lem:A5Lemma}, \ref{lem:A6Lemma}, \ref{lem:OmLemma}, \ref{lem:A8Lemma}, \ref{lem:A16Lemma}, \ref{lem:MersenneLemma}, and Lemma \ref{lem:A5A6Lemma}, we exhibit a soluble subgroup $F$ of $G$ with particular orbit lengths. These orbit lengths are as listed in Table \ref{tab:OrbitSizes}, and this completes the proof.
\end{proof}

\section{The proof of Theorem \ref{thm:transgens}}\label{sec:proof}
In this section, we prove Theorem \ref{thm:transgens}.
First, we require the following definitions and lemma from \cite{MR3812195}.

\begin{Definition}\label{KDef} For a positive integer $s$ with prime factorisation $s=p_{1}^{r_{1}}p_{2}^{r_{2}}\hdots p_{t}^{r_{t}}$, set\\ $\omega{(s)}:=\sum r_{i}$, $\omega_{1}{(s)}:=\sum r_{i}p_{i}$, $K(s):=\omega_{1}{(s)}-\omega(s)=\sum r_i(p_i-1)$ and 
$$\ws(s)=\frac{s}{2^{K(s)}}\binom{K(s)}{\left\lfloor\frac{K(s)}{2}\right\rfloor}.$$ \end{Definition} 
\noindent For a prime $p$, write $s_{p}$ for the $p$-part of $s$. 
\begin{Definition} Let $p$ be prime, and let $s$ be a positive integer. We define
$$E_{sol}(s,p):=\min\left\{\ws(s),s_{p}\right\}.$$
 \end{Definition}

\begin{lem}\cite[Corollary 4.25(ii)(b)]{MR3812195}\label{chiefCor}
Let $G$ be a transitive permutation group of degree $n$, and suppose that $G$ is imprimitive with a minimal block of size $2$. Let $\Sigma$ be the associated system of blocks of size $\frac{n}{2}$, so that $G$ may be viewed as a subgroup of the wreath product $2\wr G^{\Sigma}$. Let $F$ be a soluble subgroup of $G^{\Sigma}$, and suppose that $F$ has orbit lengths $s_1,\hdots,s_r$. 
Then $$d(G)\le \sum_{i=1}^rE_{sol}(s_i,2)+d(G^{\Sigma}).$$
\end{lem}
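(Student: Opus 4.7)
The plan is to exploit the normal elementary-abelian $2$-subgroup $M := G \cap 2^{\Sigma}$ lying in the base group of $2 \wr G^{\Sigma}$. The short exact sequence
\[
1 \longrightarrow M \longrightarrow G \longrightarrow G^{\Sigma} \longrightarrow 1
\]
together with a standard lifting argument (since $M$ is abelian, and thus carries the structure of an $\F_2[G^{\Sigma}]$-module by conjugation) gives $d(G) \le d_{G^{\Sigma}}(M) + d(G^{\Sigma})$, where $d_{G^{\Sigma}}(M)$ denotes the minimal size of a generating set for $M$ as a $G^{\Sigma}$-module. Because any $F$-module generating set for $M$ is also a $G^{\Sigma}$-module generating set, $d_{G^{\Sigma}}(M) \le d_F(M)$, and it remains to prove $d_F(M) \le \sum_{i=1}^r E_{sol}(s_i,2)$.

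Next, I would filter $M$ along the $F$-orbit decomposition $\Sigma = \bigsqcup_{i=1}^r \Omega_i$. The ambient permutation module splits as $\F_2^{\Sigma} = \bigoplus_i V_i$ with $V_i := \F_2^{\Omega_i}$, and setting $M_k := M \cap (V_1 \oplus \cdots \oplus V_k)$ yields a chain of $\F_2[F]$-submodules whose successive quotients $M_k/M_{k-1}$ embed, via projection to $V_k$, as $\F_2[F]$-submodules $W_k \le V_k$. Lifting generators along this filtration gives $d_F(M) \le \sum_{k=1}^r d_F(W_k)$, so the task reduces to showing that for each $k$, every $\F_2[F]$-submodule $W$ of $V_k$ satisfies $d_F(W) \le E_{sol}(s_k,2) = \min\{\ws(s_k),(s_k)_2\}$.

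The $2$-part inequality $d_F(W) \le (s_k)_2$ follows from a Sylow-theoretic argument: a Sylow $2$-subgroup $P$ of the image of $F$ on $\Omega_k$ has every orbit of size dividing $(s_k)_2$, and combining the Frattini argument with Nakayama's lemma applied to $W/\mathrm{rad}(W)$ reduces generation of $W$ as an $F$-module to generation of its $P$-coinvariants, which needs at most $(s_k)_2$ elements.

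The harder inequality $d_F(W) \le \ws(s_k)$ is the main obstacle. I would proceed by induction on $\omega(s_k)$ using the structure of soluble transitive groups: passing to a minimal $F$-invariant block system on $\Omega_k$ realises $F^{\Omega_k}$ as a subgroup of a wreath product $F_1 \wr F_2$ with $F_1$ transitive of prime degree $p$ and $F_2$ transitive of degree $s_k/p$. The permutation module $V_k$ then admits a Clifford-type decomposition along this blocking, and generators of $W$ can be assembled from those of its image in the ``top'' module together with those of its intersection with the kernel of the top projection. The characteristic binomial factor $\binom{K(s_k)}{\lfloor K(s_k)/2\rfloor}$ in $\ws(s_k)$ arises from a Sperner-type count of worst-case composition-factor multiplicities accumulated over the prime-by-prime recursion, while the exponent $K(s_k) = \sum r_i(p_i-1)$ records the codimensions of augmentation ideals accumulated at each step. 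Combining this $\ws$-bound with the Sylow bound gives $d_F(W) \le E_{sol}(s_k,2)$, and summing over $k$ completes the proof.
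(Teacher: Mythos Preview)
The paper does not prove this lemma at all: it is quoted verbatim as \cite[Corollary 4.25(ii)(b)]{MR3812195} and used as a black box. So there is no ``paper's own proof'' to compare against; the argument lives entirely in the cited 2018 paper.

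That said, your outline is the right architecture and matches what the introduction of the present paper sketches: pass to the base-group module $M$, bound $d(G)\le d_{G^{\Sigma}}(M)+d(G^{\Sigma})$, replace $G^{\Sigma}$ by the soluble subgroup $F$ to get $d_{G^{\Sigma}}(M)\le d_F(M)$, and then filter $M$ along the $F$-orbit decomposition of $\Sigma$ so that the problem reduces to bounding $d_F(W)$ for $W$ an $\F_2[F]$-submodule of a transitive permutation module $\F_2^{\Omega_k}$ of dimension $s_k$. All of this is correct and standard.

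The genuine content, and the place where your sketch stops being a proof, is the inequality $d_F(W)\le \ws(s_k)$. Your paragraph here is essentially a restatement of the shape of the answer rather than an argument: saying that the binomial coefficient ``arises from a Sperner-type count of worst-case composition-factor multiplicities'' and that $K(s_k)$ ``records the codimensions of augmentation ideals'' does not explain why those particular quantities bound the number of module generators. In \cite{MR3812195} this bound is obtained through a careful induction on the prime-power filtration of the block structure, tracking how many copies of each simple $\F_2[F]$-module can occur in the head of $W$; the combinatorics that produces $\ws(s)=\frac{s}{2^{K(s)}}\binom{K(s)}{\lfloor K(s)/2\rfloor}$ is not something one can wave through. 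Your Sylow argument for the $(s_k)_2$ bound is closer to complete but still compressed: the step ``reduces generation of $W$ as an $F$-module to generation of its $P$-coinvariants'' needs the observation that every simple $\F_2[F]$-module restricts nontrivially to $P$ (true since $F$ is soluble and the field has characteristic $2$), together with a count showing the $P$-coinvariants of $\F_2^{\Omega_k}$ have dimension at most $(s_k)_2$. As written, a reader could not reconstruct either bound from your sketch.
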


\begin{cor}\label{ChiefCorCor}
Let $G$ be a transitive permutation group of degree $n$, and suppose that $G$ is imprimitive with a minimal block of size $2$. Let $\Sigma$ be the associated system of blocks of size $\frac{n}{2}$, so that $G$ may be viewed as a subgroup of the wreath product $2\wr G^{\Sigma}$. Suppose that $\frac{n}{2}$ has the form $\frac{n}{2}=2^x3^y5^z$, with $0\le y,z\le 1$, and that $G^{\Sigma}$ contains no soluble transitive subgroups. Let $F$ be a soluble subgroup of $G^{\Sigma}$ with orbit lengths as exhibited in Table \ref{tab:OrbitSizes}.
\begin{enumerate}[(1)]
    \item If $F$ is as in row 12 of Table \ref{tab:OrbitSizes}, then $d(G)\le 2^{e_2+b+1}a+d(G^{\Sigma})$.
    \item If $F$ is as in row 22 of Table \ref{tab:OrbitSizes}, then $d(G)\le 2^{e_2+b+2}a+d(G^{\Sigma})$.
    \item If $F$ is as in row 24, 25, or 26 of Table \ref{tab:OrbitSizes}, then $d(G)\le 2^{e_2}54a+d(G^{\Sigma})$; $d(G)\le 2^{e_2}10a+d(G^{\Sigma})$; or $d(G)\le 2^{e_2}3a+d(G^{\Sigma})$, respectively.
    \item If $F$ is as in row 27 of Table \ref{tab:OrbitSizes}, then $d(G)\le 2^{e_2+4-b}a+d(G^{\Sigma})$.
    \item If $F$ is as in row 28 of Table \ref{tab:OrbitSizes}, then $d(G)\le 2^{e_2}3a+d(G^{\Sigma})$.
    \item If $F$ is as in row 29 of Table \ref{tab:OrbitSizes}, then $d(G)\le 2^{e_2}a+d(G^{\Sigma})$.
    \item If $F$ is as in row 30 of Table \ref{tab:OrbitSizes}, then $d(G)\le 2a+d(G^{\Sigma})$.
\end{enumerate}
\end{cor}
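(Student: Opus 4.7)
The plan is a direct application of Lemma~\ref{chiefCor}, combined with the definitional bound $E_{sol}(s,2)=\min\{\ws(s),s_2\}\le s_2$. For each of the seven listed rows of Table~\ref{tab:OrbitSizes} one simply sums the 2-parts of the orbit lengths of $F$ and adds $d(G^{\Sigma})$. Because $3$, $5$, $7$, and the Mersenne primes $p$ and $q$ are all odd, these 2-parts collapse to contributions from the explicit factors $2^b$, $a_2\le a$, and $(X^{(i,j)}_k)_2\le X^{(i,j)}_k$ alone.

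For parts (1) and (2), corresponding to rows 12 and 22, every $F$-orbit has length $2^b\cdot 3p^i\cdot 5aX^{(i,j)}_k$, with 2-part at most $2^b a X^{(i,j)}_k$. Multiplying by the orbit count $C^{(i)}_j$ and summing successively via
\[
\sum_k X^{(i,j)}_k=X,\qquad \sum_j C^{(i)}_j=\binom{e_2}{i},\qquad \sum_{i=0}^{e_2}\binom{e_2}{i}=2^{e_2},
\]
yields the bound $2^{e_2+b}Xa$. In row 12 we have $X=2$, giving $2^{e_2+b+1}a$. In row 22 one verifies directly that $2^{e_2+b}Xa\le 2^{e_2+b+2}a$ for each of the three admissible pairs $(X,b)\in\{(4,1),(4,0),(2,0)\}$.

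For parts (3)--(7), corresponding to rows 24--30, all orbit lengths are odd except for the possible factor $a$ that appears only in row 30. Hence each $E_{sol}$-summand is at most $1$ in rows 24--29 (and at most $a_2\le a$ in row 30), so the bound reduces to counting $F$-orbits. For rows 24--26 the total is $a(s_1+s_2+s_3)\cdot 2^{e_2}$, equalling $54a\cdot 2^{e_2}$, $10a\cdot 2^{e_2}$, and $3a\cdot 2^{e_2}$ respectively; for row 27 it is $a\cdot (16/2^b)\cdot 2^{e_2}=2^{e_2+4-b}a$; for row 28 it is $(1+2)a\cdot 2^{e_2}=3a\cdot 2^{e_2}$; for row 29 it is $a\cdot 2^{e_2}$; and for row 30 the two orbits contribute $2a_2\le 2a$ in total. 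Adding $d(G^{\Sigma})$ produces the stated bounds.

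There is no conceptual obstacle; the argument is a careful bookkeeping exercise resting on the binomial identity $\sum_i\binom{e_2}{i}=2^{e_2}$ and the trivial inequalities $a_2\le a$, $(X^{(i,j)}_k)_2\le X^{(i,j)}_k$. The only place requiring slight care is part (2), where a single bound $2^{e_2+b+2}a$ has to accommodate all three admissible pairs $(X,b)$; this is verified by direct substitution into the formula $2^{e_2+b}Xa$ obtained above.
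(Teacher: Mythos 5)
Your proposal is correct and follows essentially the same route as the paper: apply Lemma~\ref{chiefCor} with the bound $E_{sol}(s,2)\le s_2$, so that only the $2$-parts of the orbit lengths contribute, and then sum over the partition data using $\sum_{i,j}C^{(i)}_j=\sum_i\binom{e_2}{i}=2^{e_2}$ for rows 12 and 22 and a direct orbit count for rows 24--30. The bookkeeping, including the verification that $2^{e_2+b}Xa\le 2^{e_2+b+2}a$ for all three admissible pairs $(X,b)$ in row 22, checks out.
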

\begin{proof}
Write $s_1$, $\hdots$ $s_r$ for the $F$-orbit lengths. Then by Lemma \ref{chiefCor} and the definition of $E_{sol}$, we have 
\begin{align}\label{fgt}
    d(G)\le \sum_{i=1}^r (s_i)_2 +d(G^{\Sigma}).
\end{align}
Parts (3), (4), (5), and (6) now follow immediately from (\ref{fgt}) and inspection of the relevant row in Table \ref{tab:OrbitSizes}. 

Suppose now that $F$ is as in row 12 or 22 of Table \ref{tab:OrbitSizes}. Then (\ref{fgt}) implies that 
\begin{align*}
   d(G)\le 2^b\sum_{i,j,k}C^{(i)}_j (X^{(i,j)}_k)_2 +d(G^{\Sigma}). 
\end{align*}
Since $(X^{(i,j)}_k)$ is an ordered positive partition of either $2$ or $4$, the result now follows easily by going through each of the possibilities for $(X^{(i,j)}_k)_k$, and noting that $\sum_{i,j}C^{(i)}_j=\sum_i\binom{e_2}{i}=2^{e_2}$.
\end{proof}

We are now ready to prove Theorem \ref{thm:transgens}.
\begin{proof}[Proof of Theorem \ref{thm:transgens}]
By \cite[Theorem 1.1]{MR3812195}, we may assume that $n$ has the form $n=2^x3^y5$, where $y\in\{0,1\}$. Furthermore, we may assume that $17\le x\le 26$ if $y=0$, and $15\le x\le 35$ if $y=1$; that $G$ has
a block system $\Sigma$ consisting of blocks of size $2$; and that $G^{\Sigma}$ contains no soluble transitive subgroup.

It then follows from Theorem \ref{thm:MinTransTheorem} that $G^{\Sigma}$ has a soluble subgroup $F$ whose orbit lengths occur in one of the rows of Table \ref{tab:OrbitSizes}. We can then use this fact, together with Lemma \ref{chiefCor} to prove the theorem. Indeed, suppose that we have proved the theorem for all degrees properly dividing $n$, and that the possible orbit lengths for the soluble subgroup $F$ are $s_{1,j},s_{2,j},\hdots,s_{r_j,j}$, for $1\le j\le t$. We then compute the maximum value of  
$\sum_{i=1}^{r_j}E_{sol}(s_{i,j},2)$ over $1\le j\le t$, and add it to our previously obtained bound for $d(G^{\Sigma})$. By Lemma \ref{chiefCor}, this gives a bound for $d(G)$.

Let us illustrate this strategy with some examples. Suppose first that $n=2^{17}5$. From Table \ref{tab:OrbitSizes}, we see that $G^{\Sigma}\le\Sym_{2^{16}5}$ has a soluble subgroup whose orbit lengths lie in one of the rows 7, 18, 19, or 29 (with $y_0=0$, $z_0=1$ in the last case). Computing as in the $n=2^{15}15$ example given at the beginning of the section, we see that the lengths of the $F$-orbits are one of the following:
\begin{itemize}
    \item $2^i$ orbits of equal lengths, $i\in\{1,2\}$;
    \item $2$ orbits of length $2^{14}5$ and one orbit of length $2^{15}5$; 
    \item $1$ orbit of length $2^{14}5$ and $1$ orbit of length $2^{14}15$; or
    \item $\binom{e_2}{i}a$ orbits of size $5p^i$ and $\binom{e_2}{i}a$ orbits of size $5p^{i+1}$, for each $0\le i\le e_2$, where the possibilities for $e_2$ and $p$ are $(e_2,p)\in\{(A,2^5-1),(1,2^{13}-1)\text{ : }1\le A\le 2\}$, and $\frac{n}{2}=5a(p+1)^{e_2}$.
\end{itemize}
We make some remarks about the last point above. Since $(p+1)^{e_2}$ must divide $\frac{n}{2}$, we only need to consider the Mersenne primes $p=2^u-1$ such that $p+1=2^u<2^k:=(\frac{n}{2})_2$, and those $e_2$ with $1\le e_2\le \left\lfloor \frac{k}{u}\right\rfloor-1$. And since we are assuming, throughout this proof, that $n_2\le 2^{35}$, the only Mersenne primes under consideration are $2^u-1$, for $u$ a prime less than or equal to $31$. 
In the case $n=2^{17}5$ above, we must also have that $5$ divides $p-1$. So only the cases $u\in\{5,13\}$ can occur.

We now proceed to compute a bound for $d(G)$ (for $n=2^{17}5$) in each case, by using Table \ref{tab:OrbitSizes} and Corollary \ref{chiefCor}, as explained above. By \cite[Table A.1]{MR3812195}, we have $d(G^{\Sigma})\le 65538$.
If $F$ has $2$ orbits of length $2^{15}5$ then we get $d(G)\le 2E_{sol}(2^{15}5,2)+65538=123274$. Similarly, if $F$ has $4$ orbits of length $2^{14}5$ then we get $d(G)\le 126313$; if $F$ has $2$ orbits of length $2^{14}5$ and $1$ orbit of length  $2^{15}5$ then $d(G)\le 124793$; and if $F$ has $1$ orbit of length $2^{14}5$ and $1$ orbit of length  $2^{14}15$ then $d(G)\le 97115$. Suppose now that $F$ has $\binom{e_2}{i}d$ orbits of size $5p^i$ and $\binom{e_2}{i}d$ orbits of size $5p^{i+1}$, for each $0\le i\le e_2$, where $(e_2,p)\in\{(1,2^5-1),(2,2^5-1),(1,2^{13}-1)\}$ and $a=\frac{n}{10(p+1)^{e_2}}$. If $(e_2,p)=(1,2^5-1)$, then $F$ has $2^{11}$ orbits of size $5$, and $2^{11}$ orbits of size $5\times 31=155$. Therefore, we get $d(G)\le  2^{11}E_{sol}(5,2)+2^{11}E_{sol}(155,2)+65538=69634$.
The other cases are entirely similar, and we see that $d(G)$ is always bounded above by $126313$. Since $\left\lfloor\frac{c2^{17}5}{\sqrt{\log{2^{17}5}}}\right\rfloor=129117$, this proves Theorem \ref{thm:transgens} in the case $n=2^{17}5$.

Next, we will give an example to show how the required upper bounds can be derived in the cases $n=2^x15$. More precisely, we will go through the necessary computations in the case $n:=2^{15}15$. In order to do this, we will first need an upper bound on $d(G^{\Sigma})$. Since $G^{\Sigma}$ is a transitive permutation group of degree $2^{14}15$, we can deduce from \cite[Table A.1]{MR3812195} that $d(G^{\Sigma})\le 49156$. Using the classification of the transitive groups of degree $48$ from \cite{HRT}, however, we can improve this bound. In order to avoid repetition of details, we will simply refer to \cite[proof of Lemma 5.12, case 2, page 40]{MR3812195}, where the proofs of the bounds in \cite[Table A.1]{MR3812195} are given. The author uses the classification of the transitive permutation groups of degree up to $32$ \cite{CH}, together with the bounds in Lemma \ref{chiefCor} above, to derive the upper bound on $d(G^{\Sigma})\le 49156$. Within this computation, the estimate $d(X)\le 16$ for transitive groups $X\le \Sym_{48}$ is used. We can now use the exact bound $d(X)\le 10$ for transitive $X\le \Sym_{48}$ from \cite[Section 5]{HRT}. Performing the computations in \cite[proof of Lemma 5.12, case 2, page 40]{MR3812195} again with this new bound, we quickly deduce that
\begin{align}\label{newb48}
  d(G^{\Sigma})\le 49150\text{ whenever }G^{\Sigma}\le{\Sym}_{2^{14}15}\text{ is transitive.}  
\end{align}

Using (\ref{newb48}) and Lemma \ref{chiefCor}, we can now determine an upper bound for $d(G)$ when $n=2^{15}15$. At the beginning of the section, we listed the possible $F$-orbit lengths in the case when $G$ has a unique nonabelian chief factor, and this chief factor is a direct product of copies of $\Alt_5$ (these cases correspond to rows 1--11 in Table \ref{tab:OrbitSizes}). Entirely analogous calculations give us the possible $F$-orbit lengths when $G$ has a unique nonabelian chief factor, and this chief factor is a direct product of copies of $\Alt_6$ (these cases correspond to rows 13--21 in Table \ref{tab:OrbitSizes}) or $\PSp_4(4)$ or $\POmega^+_8(2)$ (row 23). When $F$ has orbit lengths as in rows 12, 22, and 24--30 of \ref{tab:OrbitSizes}, we use Corollary \ref{ChiefCorCor} to bound $d(G)$. Taking the maximum of these bounds for $d(G)$, we get $d(G)\le 97401$ in each case. This proves the theorem in this case, since $\left\lfloor\frac{c2^{15}15}{\sqrt{\log{2^{15}15}}}\right\rfloor=97895$.

The remaining cases with $n$ of the form $n=2^{x}15$, and $16\le x\le 35$, are entirely similar, except that we use the bound $d(G^{\Sigma})\le \left\lfloor\frac{c2^{x-1}15}{\sqrt{\log{2^{x-1}15}}}\right\rfloor$ for $d(G^{\Sigma})$ (which holds by the inductive hypothesis). We note in particular that we get $d(G^{\Sigma})\le 189053$ when $x=16$ (this will be important). The result now follows in each case, except when $x=17$. So assume that $x=17$. By \cite[Theorem 1.1]{MR3812195}, we may assume that $G^{\Sigma}$ is imprimitive with a minimal block of size $2$. Let $K_1$ be the kernel of the action of $G^{\Sigma}$ on a set $\Sigma_1$ of blocks of size $2$ in $\Sigma$.
Assume first that $G^{\Sigma}\cap K_1\neq 1$. If $(G^{\Sigma})^{\Sigma_1}\cong G^{\Sigma}/K_1$ contains a soluble transitive subgroup $S/K_1$, then $S$ is a soluble transitive subgroup of $G^{\Sigma}$ -- a contradiction. Thus, $(G^{\Sigma})^{\Sigma_1}$ contains no soluble transitive subgroups. Because of this, we can use our previously obtained bound $d(G^{\Sigma})\le 189053$ in this case. If $G^{\Sigma}\cap K_1=1$, then $d(G^{\Sigma})=d(G^{\Sigma_1})\le \left\lfloor\frac{c2^{15}15}{\sqrt{\log{2^{15}15}}}\right\rfloor=98547$, so in either case, we get $d(G^{\Sigma})\le 189053$. By computing the upper bound for $d(G)$ as we did in the case $n=2^{15}15$ above, we now get $d(G)\le 371369$. This gives us what we need, since $\left\lfloor\frac{c2^{17}15}{\sqrt{\log{2^{17}15}}}\right\rfloor=372380$.
\end{proof}
\bibliographystyle{plain}

\end{document}